\documentclass{amsart}%
\usepackage{amsfonts}
\usepackage{hyperref}
\usepackage{amsmath}
\usepackage{amssymb}
\usepackage{graphicx}%
\setcounter{MaxMatrixCols}{30}
\newtheorem{theorem}{Theorem}[section]

\newtheorem{definition}[theorem]{Definition}

\newtheorem{lemma}[theorem]{Lemma}

\newtheorem{proposition}[theorem]{Proposition}
\newtheorem{remark}[theorem]{Remark}

\numberwithin{equation}{section}
\begin{document}
\title[CR-Invariants and the Scattering Operator]{CR-Invariants and the Scattering Operator for Complex Manifolds with Boundary}
\author{Peter D. Hislop}
\address{Department of Mathematics \\
University of Kentucky, \\
Lexington, Kentucky 40506--0027 \\
U. S. A.}
\author{Peter A. Perry}
\address{Department of Mathematics \\
University of Kentucky, \\
Lexington, Kentucky 40506--0027 \\
U. S. A.}
\author{Siu-Hung Tang}
\address{Department of Mathematics \\
University of Kentucky \\
Lexington, Kentucky 40506--0027 \\
U. S. A.}
\thanks{Supported in part by NSF\ grant DMS-0503784}
\thanks{Supported in part by NSF\ grant DMS-0408419}
\thanks{Supported in part by NSF\ grants DMS-0408419 and DMS-0503784.}
\maketitle
\tableofcontents

\section{Introduction}

\label{sec.intro}

The purpose of this paper is to describe certain CR-covariant differential
operators on a strictly pseudoconvex CR manifold $M$ as residues of the
scattering operator for the Laplacian on an ambient complex K\"{a}hler
manifold $X$ having $M$ as a `CR-infinity.' We also characterize the CR
$Q$-curvature in terms of the scattering operator. Our results parallel
earlier results of Graham and Zworski \cite{GZ:2003}, who showed that if $X$
is an asymptotically hyperbolic manifold carrying a Poincar\'{e}-Einstein
metric, the $Q$-curvature and certain conformally covariant differential
operators on the `conformal infinity' $M$ of $X$ can be recovered from
the scattering operator on $X$. The results in this paper were
announced in \cite{HPT:2006}.

To describe our results, we first recall some basic notions of CR\ geometry
and recent results \cite{FH:2003}, \cite{GG:2005} concerning CR-covariant
differential operators and CR-analogues of $Q$-curvature. If $M$ is a smooth,
orientable manifold of real dimension $(2n+1$), a \emph{CR-structure} on $M$
is a real hyperplane bundle $H$ on $TM$ together with a smooth bundle map
$J:H\rightarrow H$ with $J^{2}=-1$ that determines an almost complex structure
on $H$. We denote by $T_{1,0}$ the eigenspace of $J$ on $H\otimes\mathbb{C}$
with eigenvalue $+i$; we will always assume that the CR-structure on $M$ is
integrable in the sense that $\left[  T_{1,0},T_{1,0}\right]  \subset T_{1,0}%
$. We will assume that $M$ is orientable, so that the line bundle $H^{\perp
}\subset T^{\ast}M$ admits a nonvanishing global section. A
\emph{pseudo-Hermitian structure} on $M$ is smooth, nonvanishing section
$\theta$ of $H^{\perp}$. The Levi form of $\theta$ is the Hermitian form
$L_{\theta}(v,w)=d\theta(v,Jw)$ on $H$. The CR structure on $M$ is called
\emph{strictly pseudoconvex} if the Levi form is positive definite. Note that
this condition is actually independent of the choice of $\theta$ compatible
with a given orientation of $M$. We will always assume that $M$ is strictly
pseudoconvex in what follows. It follows from strict pseudoconvexity that
$\theta$ is a contact form, and the form $\theta\wedge(d\theta)^{n}$ is a
volume form that defines a natural inner product on $\mathcal{C}^{\infty}(M)$
by integration. The pseudo-Hermitian structure on $M$ also determines a
connection on $TM$, the Tanaka-Webster connection $\nabla_{\theta}$; the basic
data of pseudo-Hermitian geometry are the curvature and torsion of this
connection (see \cite{Tanaka:1975}, \cite{Webster:1978}).

Given a fixed CR-structure $(H,J)$ on $M$, any nonvanishing section
$\overline{\theta}$ of $H^{\perp}$ compatible with a given orientation takes
the form $e^{2\Upsilon}\theta$ for a fixed section $\theta$ of $H^{\perp}$ and
some function $\Upsilon\in\mathcal{C}^{\infty}(M)$. The corresponding Levi
form is given by $L_{\overline{\theta}}=e^{2\Upsilon}L_{\theta}$. In this
sense the CR-structure determines a conformal class of pseudo-Hermitian
structures on $M$.

For strictly pseudoconvex CR-manifolds, Fefferman and Hirachi \cite{FH:2003}
proved the existence of CR-covariant differential operators $P_{k}$ of order
$2k$, $k=1,2,\ldots,n+1$, whose principal parts are $\Delta_{\theta}^{k}$,
where $\Delta_{\theta}$ is the sub-Laplacian on $M$ with respect to the
pseudo-Hermitian structure $\theta$. They exploit Fefferman's construction
(formulated intrinsically by Lee in \cite{Lee:1986}) of a circle bundle
$\mathcal{C}$ over $M$ with a natural conformal structure and a mapping
$\theta\mapsto g_{\theta}$ from pseudo-Hermitian structures on $M$ to Lorentz
metrics on $\mathcal{C}$ that respects conformal classes. They then construct
the conformally covariant differential operators found in
\cite{GJMS:1992} (referred to here as GJMS operators)
on $\mathcal{C}$, and show that these operators pull back to CR-covariant
differential operators on $M$. The CR $Q$-curvature may be similarly defined
as a pullback to $M$ of Branson's $Q$-curvature on the circle bundle
$\mathcal{C}$. Here we will show that the operators $P_{k}$ on $M$ occur as
residues for the scattering operator associated to a natural scattering
problem with $M$ as the boundary at infinity, and that the CR $Q$-curvature
$Q_{\theta}^{CR}$ can be computed from the scattering operator.

To describe the scattering problem, we first discuss its geometric setting.
Recall that if $M$ is an integrable, strictly pseudoconvex CR-manifold of
dimension $(2n+1)$ with $n\geq2$, there is a complex manifold $X$ of complex
dimension $m=n+1$ having $M$ as its boundary so that the CR-structure on $M$
is induced from the complex structure on $X$ (this result is false, in
general, when $n=1$; see \cite{HL:1975}). Let $\varphi$ be a defining function
for $M$ and denote by $\mathring{X}$ the interior of $X$ (we take $\varphi<0$
in $\mathring{X}$). The associated K\"{a}hler metric $g$ on $\mathring{X}$ is
the K\"{a}hler metric with K\"{a}hler form
\begin{equation}
\omega_{\varphi}=-\frac{i}{2}\partial\overline{\partial}\log(-\varphi)
\label{eq.omega}%
\end{equation}
in a neighborhood of $M$, extended smoothly to all of $X$. The metric has the
form
\begin{equation}
\label{eq.metric}g_{\varphi}=-\frac{\eta}{\varphi}+(1-r\varphi)\left(
\frac{d\varphi^{2}}{\varphi^{2}}+\frac{\Theta^{2}}{\varphi^{2}}\right)  .
\end{equation}
in a neighborhood of $M$, where $\eta$ and $\Theta$ have Taylor series to all
orders in $\varphi$ at $\varphi=0$. The boundary values $\Theta|_{M}=\theta$,
and $\eta|_{H}=h$ induce respectively a contact form on $M~$and a Hermitian
metric on $H$. The function $r$ is a smooth function, the transverse
curvature, which depends on the choice of $\varphi$ (see \cite{GL:1988}).
Thus, the conformal class of a Hermitian metric $h$ on $H$, a \emph{subbundle}
of $TM$, is a kind of `Dirichlet datum at infinity' for the metric $g_\varphi$, that
is $(-\varphi) g_\varphi |_{H}=h$.

A motivating example for our work is the case of a strictly pseudoconvex
domain $X\subset\mathbb{C}^{m}$ with Hermitian metric%
\[
g=\sum_{j,k=1}^{m}\frac{\partial^{2}}{\partial z_{j}\partial z_{\overline{k}}%
}\log\left(  -\frac{1}{\varphi}\right)  dz_{j}\otimes dz_{\overline{k}},
\]
where $\varphi$ is a defining function for the boundary of $X$ with
$\varphi<0$ in the interior of $X$. In this example, observe that if
\[
\Theta=\frac{i}{2}\left(  \overline{\partial}\varphi-\partial\varphi\right)
\]
and $\iota:M\rightarrow X$ is the natural inclusion, then $\theta=\iota^{\ast
}\Theta$ is a contact form on $M$ that defines the CR-structure $H=\ker\theta
$. The form $d\theta$ induces the Levi form on $M$ and so defines a
pseudo-Hermitian structure on $M$. Denote by $J$ the almost complex structure
on $H$; the two-form $h=d\theta(~\cdot~,J~\cdot~)$ is a pseudo-Hermitian
metric on $M$. It is not difficult to see that the conformal class of the
pseudo-Hermitian structure on $M$, i.e., its CR-structure, is independent of
the choice of defining function $\varphi$.

It is natural to consider scattering theory for the Laplacian, $\Delta_{g}$,
on $(\mathring{X},g)$, where $X$ is a complex manifold with boundary $M$. As
discussed in what follows, the metric $g$ belongs to the class of $\Theta
$-metrics considered by Epstein, Melrose, and Mendoza \cite{EMM:1991}; see
also the recent paper of Guillarmou and S\'a Barreto \cite{GSb:2006} where
scattering theory for asymptotically complex hyperbolic manifolds (a class
which includes those considered here) is analyzed in depth. Thus, the full
power of the Epstein-Melrose-Mendoza analysis of the resolvent $R(s)=\left(
\Delta_{g}-s(m-s)\right)  ^{-1}$ of $\Delta_{g}$ is available to study
scattering theory on $(\mathring{X},g)$.

For $f\in\mathcal{C}^{\infty}(M)$, $\Re(s)=m/2$, and $s\neq m/2$, there is a
unique solution $u$ of the `Dirichlet problem'%
\begin{align}
\left(  \Delta_{g}-s(m-s)\right)  u  &  =0\label{eq.dp}\\
u  &  =(-\varphi)^{m-s}F+(-\varphi)^{s}G\nonumber\\
\left.  F\right\vert _{M}  &  =f.\nonumber
\end{align}
where $F,G\in\mathcal{C}^{\infty}(X)$. The uniqueness follows from the absence
of $L^{2}$ solutions of the eigenvalue problem for $\Re(s)=m/2$;
this may be proved, for example, using \cite{VW:2004} (see the comments in
\cite{GSb:2006}). Here we will use the explicit formulas for the K\"{a}hler
form and Laplacian obtained in \cite{GL:1988} to obtain the asymptotic
expansions of solutions to the generalized eigenvalue problem.

Unicity for the `Dirichlet problem' (\ref{eq.dp}) implies that the Poisson
map
\begin{align}
\mathcal{P}(s):\mathcal{C}^{\infty}(M) &  \rightarrow\mathcal{C}^{\infty
}(\mathring{X})\label{eq.Poisson.map}\\
f &  \mapsto u\nonumber
\end{align}
and the scattering operator
\begin{align*}
S_{X}(s):\mathcal{C}^{\infty}(M) &  \rightarrow\mathcal{C}^{\infty}(M)\\
f &  \mapsto\left.  G\right\vert _{M}%
\end{align*}
are well-defined. The operator $S_{X}(s)$ depends \textit{a priori} on the
boundary defining function $\varphi$ for $M$. If $\overline{\varphi
}=e^{\upsilon}\varphi$ is another defining function for $M$ and $\upsilon
|_{M}=\Upsilon$, the corresponding scattering operator $\overline{S}_{X}(s)$
is given by
\[
\overline{S}_{X}(s)=e^{-s\Upsilon}S_{X}(s)e^{(s-m)\Upsilon}.
\]
The operator $S_{X}(s)$ admits a meromorphic continuation to the complex
plane, possibly with singularities at $s=0,-1,-2,\cdots$; see
\cite{Melrose:2000} where the scattering operator is described and the problem
of studying its poles and residues is posed, and see \cite{GSb:2006} for a
detailed analysis of the scattering operator. The scattering operator is
self-adjoint for $s$ real. We will show that, with a geometrically natural
choice of the boundary defining function $\varphi$, the residues of
certain poles of $S_{X}(s)$ are CR-covariant differential operators.

To describe the setting for this result, recall that for strictly pseudoconvex
domains $\Omega$ in $\mathbb{C}^{m}$, Fefferman \cite{Fefferman:1976} proved
the existence of a defining function $\varphi$ for $\partial\Omega$ which is
an approximate solution of the complex Monge-Amp\`{e}re equation.

The complex Monge-Amp\`{e}re equation for a function $\varphi\in
\mathcal{C}^{\infty}(\Omega)$ is the equation
\begin{align*}
J\left[  \varphi\right]    & =1\\
\left.  \varphi\right\vert _{\partial\Omega}  & =0
\end{align*}
where $J$ is the complex Monge-Amp\`{e}re operator%
\[
J\left[  \varphi\right]  =\det\left[
\begin{array}
[c]{cc}%
\varphi & \varphi_{j}\\
\varphi_{\overline{k}} & \varphi_{j\overline{k}}%
\end{array}
\right]
\]
We say that $\varphi\in\mathcal{C}^{\infty}(\Omega)$ is an approximate
solution of the complex Monge-Amp\`{e}re equation if
\begin{align*}
J\left[  \varphi\right]   &  =1+\mathcal{O}\left(  \varphi^{m+1}\right)  \\
& \\
\left.  \varphi\right\vert _{\partial\Omega} &  =0
\end{align*}
The K\"{a}hler metric $g$ associated to such an approximate solution $\varphi$
is an approximate K\"{a}hler-Einstein metric on $\Omega$, i.e., $g$ obeys%
\begin{equation}
\operatorname*{Ric}(g)=-(m+1)\omega+\mathcal{O}(\varphi^{m-1}).\label{eq.KE}%
\end{equation}
where $\omega$ is the K\"{a}hler form associated to $\varphi$, and
$\operatorname*{Ric}$ is the Ricci form.

Under certain conditions, Fefferman's result can be `globalized' to the
setting of complex manifolds $X$ with strictly pseudoconvex boundary $M$, as
we discuss below. It follows that $\mathring{X}$ carries an approximate
K\"{a}hler-Einstein metric $g$ in the sense that (\ref{eq.KE}) holds.

We will call a smooth function $\varphi$ defined in a neighborhood of $M$ a
\emph{globally defined approximate solution} of the Monge-Amp\`{e}re equation
on $X$ if for each $p\in M$ there is a neighborhood $U$ of $p$ in $X$ and a
holomorphic coordinate system in $U$ for which $\varphi$ is an approximate
solution of the Monge-Amp\`{e}re equation. As we will show, such a solution
exists if and only if $M$ admits a pseudo-Hermitian structure $\theta$ which
is volume-normalized with respect to some locally defined, closed
$(n+1,0)$-form in a neighborhood of any point $p\in M$ (see section
\ref{sec.monge-ampere.global} where we defined \textquotedblleft
volume-normalized\textquotedblright, and see Burns-Epstein \cite{BE:1990}
where a similar condition is used to construct a global solution of the
Monge-Amp\'ere equation when $\dim M=3$). If $\dim M\geq5$, we can give a more
geometric formulation of this condition. Recall that a CR-manifold is
\emph{pseudo-Einstein} if there is a pseudo-Hermitian structure $\theta$ for
which the Webster Ricci curvature is a multiple of the Levi form (see Lee
\cite{Lee:1988} where this geometric notion is introduced and studied). In
\cite{Lee:1988}, Lee proved that if $\dim M\geq5$, then $M$ admits a
pseudo-Einstein, pseudo-Hermitian structure $\theta$ if and only if $\theta$
is volume-normalized with respect to a closed $(n+1,0)$-form in a neighborhood
of any point $p\in M$. If $\dim M=3$, the pseudo-Einstein condition is vacuous
and must be replaced by a more stringent condition; see section
\ref{sec.monge-ampere.global} in what follows. If $X$ is a pseudoconvex domain
in $\mathbb{C}^{m}$, this condition is trivially satisfied since the
pseudo-Hermitian structure induced by the Fefferman approximate solution is
volume-normalized with respect to the restriction of $\zeta=dz^{1}\wedge
\cdots\wedge dz^{m}$ to $M$.

\begin{theorem}
\label{thm.1}Let $X$ be a complex manifold of complex dimension $m=n+1$ with
strictly pseudoconvex boundary $M$. Let $g$ be the K\"{a}hler metric on $X$
associated to the K\"{a}hler form (\ref{eq.omega}), and let $S_{X}(s)$ be the
scattering operator for $\Delta_{\varphi}$. Finally, suppose that
$\Delta_{\varphi}$ has no $L^{2}$-eigenvalues. Then $S_{X}(s)$ has simple
poles at the points $s=m/2+k/2$, $k\in\mathbb{N}$, and%
\begin{equation}
\operatorname*{Res}_{s=m/2+k/2}S_{X}(s)=c_{k}P_{k}, \label{eq.ResSk}%
\end{equation}
where the $P_{k}$ are differential operators of order $2k$, and
\begin{equation}
c_{k}=\frac{(-1)^{k}}{2^{k}k!(k-1)!}. \label{eq.ck}%
\end{equation}
If $g$ is an approximate K\"{a}hler-Einstein metric given by a globally
defined approximate solution of the Monge-Amp\`{e}re equation, then for $1\leq
k\leq m$, the operators $P_{k}$ are CR-covariant differential operators.
\end{theorem}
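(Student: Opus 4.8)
The plan is to follow the three-stage strategy of Graham and Zworski \cite{GZ:2003}, with the near-boundary normal form of $\Delta_\varphi$ for the metric \eqref{eq.metric} (supplied by \cite{GL:1988}) playing the role of the model-hyperbolic expansion used there. The three stages are: (i) construct a formal solution of the generalized eigenvalue equation near $M$; (ii) identify the obstructions to this construction as residues of $S_X(s)$ and pin down the constants $c_k$; (iii) use a Monge--Amp\`ere normalization of $\varphi$ to upgrade the $P_k$ to CR-covariant operators.

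For (i)--(ii), from \eqref{eq.metric} one extracts an expansion $\Delta_\varphi = L_0 + (-\varphi)L_1 + \cdots$ near $M$, where $L_0$ is a ``vertical'' operator for which the indicial roots of $\Delta_\varphi - s(m-s)$ are the exponents $m-s$ and $s$, and $L_1$ is second order with principal part the sub-Laplacian $\Delta_\theta$. Writing $u = (-\varphi)^{m-s}\sum_{j\ge0}f_j(-\varphi)^j$ with $f_0 = f$ and substituting, one obtains a recursion of the schematic form $j\bigl(j-(2s-m)\bigr)f_j = \mathcal{L}_j(f_0,\dots,f_{j-1})$, with each $\mathcal{L}_j$ built from the (at most second-order) coefficient operators $L_1, L_2, \dots$; so for $s\notin\{m/2+k/2 : k\in\mathbb{N}\}$ every $f_j$ is uniquely determined, is a differential operator of order $2j$ applied to $f$, and is meromorphic in $s$ with poles only at $s = m/2+j/2$. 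At $s = s_k := m/2+k/2$ the coefficient multiplying $f_k$ has a simple zero, $f_k$ cannot in general be chosen smooth, and one must append a term $(-\varphi)^s\log(-\varphi)$ together with the second indicial branch $(-\varphi)^s\sum_j g_j(-\varphi)^j$; the order-$2k$ operator carrying the log-obstruction, with principal part $\Delta_\theta^k$, is $P_k$. Correcting this formal solution by the resolvent $R(s) = (\Delta_\varphi - s(m-s))^{-1}$ — holomorphic near $\Re s = m/2$ since $\Delta_\varphi$ has no $L^2$-eigenvalues, by \cite{EMM:1991}, \cite{GSb:2006} — produces the true solution of \eqref{eq.dp}, whose $(-\varphi)^s$-coefficient on $M$ is $S_X(s)f$; the only poles of $S_X(s)$ at $s = s_k$ come from the blow-up of $f_k$, are simple, and have residue $c_k P_k$ with $c_k$ the product of the recursion constants accumulated over the $k$ steps (together with the normalization relating the two indicial branches), which evaluates to \eqref{eq.ck}.

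For (iii), first change only the defining function: $\overline{\varphi} = e^\upsilon\varphi$ with $\upsilon|_M = \Upsilon$ induces a conformally rescaled pseudo-Hermitian structure $\overline{\theta}$ on $M$. The indicial structure is unchanged (the K\"ahler forms differ by the smooth, lower-order term $-\tfrac{i}{2}\partial\overline{\partial}\upsilon$), so $\operatorname{Res}_{s=s_k}\overline{S}_X(s) = c_k\overline{P}_k$ with the same universal constant $c_k$; combining this with the transformation law $\overline{S}_X(s) = e^{-s\Upsilon}S_X(s)e^{(s-m)\Upsilon}$ recorded before the theorem, evaluated at $s = s_k$, gives
\begin{equation*}
\overline{P}_k \;=\; e^{-\frac{m+k}{2}\Upsilon}\,P_k\,e^{-\frac{m-k}{2}\Upsilon}.
\end{equation*}
This is the conformal covariance law of an operator of order $2k$ with the correct bidegree. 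To conclude that the $P_k$ are CR-covariant operators one needs them to be intrinsic to the pseudo-Hermitian data on $M$, and this is where the Monge--Amp\`ere hypothesis enters: by Fefferman's construction a globally defined approximate solution $\varphi$ is determined modulo $\mathcal{O}(\varphi^{m+2})$ once the induced structure $\theta = \Theta|_M$ is fixed, so for $1\le k\le m$ the operator $P_k$ — which depends only on a finite jet of $\varphi$ at $M$ — is a natural differential operator built from the pseudo-Hermitian invariants of $\theta$; together with the covariance law above, this exhibits the $P_k$ as CR-covariant differential operators on $M$.

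The main obstacle is step (iii). One must verify that, under the Monge--Amp\`ere normalization, $P_k$ is genuinely insensitive to the residual freedom in $\varphi$ for the full range $1\le k\le m$: the sharp order of uniqueness in Fefferman's theorem is exactly what makes this range natural, and it is why the CR-covariance assertion is restricted to $k\le m$. One must also be sure the normalization is available at all — this is precisely the role of the global solvability of the Monge--Amp\`ere equation discussed before the theorem (the pseudo-Einstein / volume-normalization condition) — and that the relevant conformal changes of $\theta$ are carried by rescalings $\overline{\varphi} = e^\upsilon\varphi$ with the correct boundary value. Steps (i)--(ii), by contrast, are a direct (if lengthy) adaptation of \cite{GZ:2003} once the expansion of $\Delta_\varphi$ from \cite{GL:1988} is in hand; the one genuinely new analytic point there is checking that the $\Theta$-metric structure forces neither half-integer powers of $(-\varphi)$ nor logarithms into the expansion before order $k$.
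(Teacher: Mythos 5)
Your stages (i)--(ii) coincide with the paper's argument: the formal expansion of Lemma \ref{lemma.formal} built from the Graham--Lee normal form (\ref{eq.Deltag.exp}), the resolvent correction to a true Poisson operator, the regularization $\Phi_{\ell}(s)=\Phi(s)-\Phi(m-s)\circ p_{\ell,s}$ at the exceptional points, and the resulting computation $\operatorname{Res}_{s=m/2+\ell/2}S_{X}(s)=p_{\ell}=c_{\ell}P_{\ell}$ of Proposition \ref{prop.S.poles}. That part of the proposal is sound.

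Stage (iii) is where you depart from the paper, and there is a genuine gap. What the theorem asserts (see the opening of Section 4 and the introduction) is that for $1\leq k\leq m$ the residues are \emph{the} CR-covariant operators of Fefferman--Hirachi \cite{FH:2003}, i.e.\ the CR GJMS operators obtained by pulling back the operators of \cite{GJMS:1992} from the Fefferman circle bundle. Your argument establishes only that the $P_{k}$ obey the expected conformal transformation law and depend on nothing but the pseudo-Hermitian structure $\theta$; but a natural, covariant operator with principal part $\Delta_{b}^{k}$ is not unique (one may add covariant lower-order curvature corrections), so covariance plus naturality does not identify $P_{k}$ with the Fefferman--Hirachi operator. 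The paper's identification goes through Proposition \ref{prop.link}: with $Q=\left\vert z_{0}\right\vert ^{2}\varphi$ on $(K^{\ast})^{1/(n+2)}\times(-1,1)$, the Monge--Amp\`ere condition makes $i\partial\overline{\partial}Q$ the Fefferman--Graham ambient metric (Ricci-flat to the required order), and the intertwining $\widetilde{\Delta}(\left\vert z_{0}\right\vert ^{2w}\varphi^{w}u)=\left\vert z_{0}\right\vert ^{2w}\varphi^{w}\left(\Delta_{\varphi}+w(n+1+w)\right)u$ converts the log-obstruction in the Poisson expansion into the obstruction to harmonic extension by $\widetilde{\Delta}^{k}$, which is precisely the second characterization of the GJMS operators recalled in section \ref{subsec.GJMS}. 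This is the step your proposal is missing, and it is where the Monge--Amp\`ere hypothesis does its real work, not merely through Fefferman's uniqueness-mod-$\mathcal{O}(\varphi^{m+2})$. A secondary wrinkle in your covariance argument: a general conformal factor $\Upsilon$ is not realized by a rescaling $e^{\upsilon}\varphi$ that preserves the Monge--Amp\`ere normalization (only pluriharmonic $\upsilon$ do), so the operator $\overline{P}_{k}$ obtained from the rescaled defining function need not be the one your own normalized construction would attach to $\overline{\theta}$.
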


\begin{remark}
It is not difficult to show that, for generic compactly supported
perturbations of the metric, $L^{2}$-eigenvalues are absent. Our analysis
applies if only the metric $g$ has the form (\ref{eq.metric}) in a
neighborhood of $M$.
\end{remark}

\begin{remark}
We view the operators $P_{k}$ as operators on $\mathcal{C}^{\infty}(M)$; if
one instead views these operators as acting on appropriate density bundles
over $M$ they are actually invariant operators. Gover and Graham
\cite{GG:2005} showed that the CR-covariant differential operators $P_{k}$ are
logarithmic obstructions to the solution of the Dirichlet problem
(\ref{eq.dp}) when $X$ is a pseudoconvex domain in $\mathbb{C}^{m}$ with a
metric of Bergman type, but did not identify them as residues of the
scattering operator.
\end{remark}

It follows from the self-adjointness ($s$ real) and conformal covariance of
$S_{X}(s)$ that the operators $P_{k}$ are self-adjoint and conformally
covariant. As in \cite{GZ:2003}, the analysis centers on the Poisson map
$\mathcal{P}(s)$ defined in (\ref{eq.Poisson.map}). As shown in
\cite{EMM:1991}, the Poisson map is analytic in $s$ for $\operatorname{Re}%
(s)>m/2$. Moreover, at the points $s=m/2+k/2$, $k=1,2,\cdots$, the Poisson
operator takes the form
\[
\mathcal{P}(s)f=(-\varphi)^{m/2-k/2}F+[(-\varphi)^{m/2+k/2}\log(-\varphi)]G
\]
for functions $F,G\in\mathcal{C}^{\infty}(X)$ with
\[
F|_{M}=f,~G|_{M}=c_{k}P_{k}f.
\]
Here $P_{k}$ are differential operators determined by a formal power series
expansion of the Laplacian (see Lemma \ref{lemma.formal}), and are the same
operators that appear as residues of the scattering operator at points
$s=m/2+k/2$. An important ingredient in the analysis is the asymptotic form of
the Laplacian due to Lee and Melrose \cite{LM:1982} and refined by Graham and
Lee in \cite{GL:1988}.

If the defining function $\varphi$ is an approximate solution of the complex
Monge-Amp\`{e}re equation, the differential operators $P_{k}$, $1\leq k\leq
m\,$, can be identified with the GJMS\ operators owing to the characterization
of $\mathcal{P}(s)f$ described above (see Proposition 5.4 in \cite{GG:2005};
the argument given there for pseudoconvex domains easily generalizes to the
present setting).

Explicit computation shows that, for an approximate K\"{a}hler-Einstein metric
$g$, the first operator has the form
\[
P_{1}=c_{1}(\Delta_{b}+n(2(n+1))^{-1}R),
\]
where $\Delta_{b}$ is the sub-Laplacian on $X$ and $R$ is the Webster scalar
curvature, i.e., $P_{1}$ is the CR-Yamabe operator of Jerison and Lee
\cite{JL:1984}.

The CR $Q$-curvature is a pseudo-Hermitian invariant realized as the pullback
to $M$ of the $Q$-curvature of the circle bundle $\mathcal{C}$.

\begin{theorem}
\label{thm.2}Suppose that $X$ is a complex manifold with strictly pseudoconvex
boundary $M$, and suppose that $g$ is an approximate K\"{a}hler-Einstein
metric given by a globally defined approximate solution of the
Monge-Amp\`{e}re equation. Let $S_{X}(s)$ be the associated scattering
operator. The formula
\[
c_{m}Q_{\theta}^{CR}=\lim_{s\rightarrow m}S_{X}(s)1
\]
holds, where $c_{m}$ is given by (\ref{eq.ck}).
\end{theorem}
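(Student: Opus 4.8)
The plan is to follow the Graham–Zworski strategy of comparing two ways of computing the scattering operator applied to the constant function $1$ near $s=m$, exploiting the fact that at the "top" exponent $k=m$ the logarithmic term in the Poisson expansion is governed precisely by $c_m P_m$, and that $P_m$ annihilates constants in a controlled way that produces the $Q$-curvature. First I would recall from the discussion following Theorem \ref{thm.1} that for $s=m/2+k/2$ the Poisson solution $\mathcal{P}(s)f$ has the form $(-\varphi)^{m/2-k/2}F+(-\varphi)^{m/2+k/2}\log(-\varphi)\,G$ with $F|_M=f$ and $G|_M=c_k P_k f$; specializing to $k=m$, i.e. $s=m$, and $f=1$, the leading term becomes $(-\varphi)^{0}F=F$ with $F|_M=1$. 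Since $P_k$ has principal part $\Delta_\theta^k$ and, for the approximate Kähler–Einstein normalization, $P_k 1 = 0$ for $1\le k\le m-1$ (these being the GJMS/CR-covariant operators, which kill constants away from the critical order), the obstruction to solving $(\Delta_g - s(m-s))u=0$ with $u\to 1$ persists only at order $k=m$, and the coefficient of the logarithmic term there is exactly $c_m$ times the critical operator evaluated on $1$, which by the Fefferman–Hirachi/Branson construction on the circle bundle $\mathcal{C}$ is $c_m Q_\theta^{CR}$ (up to the universal constant already absorbed in $c_m$).

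Concretely, the key steps in order are: (i) write the formal-series solution near $M$ of $(\Delta_g - s(m-s))u=0$ with $u=(-\varphi)^{m-s}(1+\sum_{j\ge 1} a_j(-\varphi)^j)+\cdots$, using the Graham–Lee expansion of $\Delta_\varphi$, and track how the indicial roots $m-s$ and $s$ collide as $s\to m$; (ii) observe that $S_X(s)1 = G|_M$ where $u=(-\varphi)^{m-s}F+(-\varphi)^s G$, so $\lim_{s\to m}S_X(s)1$ is the finite part obtained when the two expansions merge, i.e. the coefficient of the emergent $\log(-\varphi)$ term; (iii) identify that coefficient with $c_m P_m 1$ via the same computation that identifies $\operatorname{Res}_{s=m}S_X(s)$ with $c_m P_m$ in Theorem \ref{thm.1}, now applied to the constant function so that the residue itself vanishes ($P_m 1$ need not be zero but $\operatorname{Res}S_X(s)$ applied to $1$ combined with the pole structure gives a finite limit); (iv) invoke the circle-bundle realization — lift $1$ to the constant on $\mathcal{C}$, apply the known Graham–Zworski identity $\lim_{s\to m}S_{\mathcal C}(s)1 = c_m Q_{g_\theta}$ for the Lorentz GJMS construction on $\mathcal C$, and pull back via Lee's mapping $\theta\mapsto g_\theta$ to get $c_m Q_\theta^{CR}$; the hypothesis that $\varphi$ is a globally defined approximate Monge–Ampère solution is exactly what makes this pullback and the pseudo-Einstein normalization work. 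Finally, I would check compatibility of the normalizing constants — that the $c_m$ from \eqref{eq.ck} that appears in the residue formula is the same $c_m$ that appears in the $Q$-curvature limit — which follows because both are extracted from the identical indicial-root collision in the Laplacian's formal expansion.

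The main obstacle I expect is step (iii)/(iv): carefully handling the fact that $P_m$ does \emph{not} annihilate constants (unlike $P_1,\dots,P_{m-1}$), so one must show that the would-be pole of $S_X(s)$ at $s=m$ is cancelled when acting on $1$ precisely because the leading exponent $m-s\to 0$ degenerates the $(-\varphi)^{m-s}$ term into a constant, converting what looks like a residue into a genuine finite limit whose value is the log-coefficient $c_m P_m 1 = c_m Q_\theta^{CR}$. This requires a delicate bookkeeping of which term in the asymptotic expansion "is" the scattered data $G$ as $s\to m$, together with verifying that the lower-order obstructions $P_k 1$ ($k<m$) vanish so that no spurious contributions to the $s\to m$ limit arise from the earlier coefficients in the series. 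The rest — the meromorphic continuation, self-adjointness, and the reduction to the already-established circle-bundle identity of Graham–Zworski and Branson — is either cited or routine given Theorem \ref{thm.1}.
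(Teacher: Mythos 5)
Your overall mechanism is the paper's: the proof goes through the observation that $\mathcal{P}(s)1\to 1$ as $s\to m$ while, for $s$ near $m$, $\mathcal{P}(s)1=\sum_{k=0}^{m}x^{m-s+k}p_{k,s}(1)+x^{s}S_{X}(s)1+\mathcal{O}(x^{m+1/2})$; comparing the two forces $\lim_{s\to m}\bigl[x^{2m-s}p_{m,s}(1)+x^{m}S_{X}(s)1\bigr]=0$, i.e. $S_{X}(m)1=-\lim_{s\to m}p_{m,s}(1)$ (Proposition \ref{prop.S1}), and the limit on the right is identified with $c_{m}Q_{\theta}^{CR}$ via the Fefferman--Hirachi characterization $P_{m,s}1=(m-s)Q_{m,s}$, $Q_{\theta}^{CR}=Q_{m,m}$ (Remark \ref{rem.FH}). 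Your steps (i)--(iii) are exactly this. Your step (iv), the detour through the circle bundle and the Graham--Zworski identity on $\mathcal{C}$, is not needed once Theorem \ref{thm.1} has identified the operators $P_{k}$ with those of the ambient construction, since the paper's definition of $Q_{\theta}^{CR}$ is already the pullback of the ambient $Q$-curvature.

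However, you have the facts about constants backwards, and you make the wrong version load-bearing. The subcritical operators do \emph{not} annihilate constants: $P_{1}1$ is a nonzero multiple of the Webster scalar curvature (the CR-Yamabe operator has a zeroth-order term), and generally $P_{k}1\neq0$ for $k<m$. What is true is that the \emph{critical} operator annihilates constants: $P_{m,s}1$ has a first-order zero at $s=m$, which is precisely why $p_{m,s}1=\frac{1}{m!}\frac{\Gamma(2s-2m)}{\Gamma(2s-m)}P_{m,s}1$ has a finite limit (the zero cancels the pole of $\Gamma(2s-2m)$) and why that limit is a multiple of $Q_{m,m}=Q_{\theta}^{CR}$. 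So the verification you propose, that ``$P_{k}1=0$ for $1\le k\le m-1$,'' would fail; fortunately it is also unnecessary. The terms $x^{m-s+k}p_{k,s}(1)$ with $k<m$ have exponents $m-s+k$ that stay bounded away from $s$ as $s\to m$, so they are absorbed into $F$ and never contaminate the coefficient of $x^{s}$; only the $k=m$ term, whose exponent $2m-s$ collides with $s$ at $s=m$, interacts with $S_{X}(s)1$. With that correction your argument coincides with the paper's.
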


It follows from Theorem \ref{thm.1} and the conformal covariance of $S_{X}(s)$
that if $\overline{\theta}=e^{2\Upsilon}\theta$, then
\[
e^{2m\Upsilon}Q_{\overline{\theta}}^{CR}=Q_{\theta}^{CR}+P_{m}\Upsilon
\]
as was already shown in Fefferman-Hirachi \cite{FH:2003}. From this it follows
that the integral $\int_{M}Q_{\theta}^{CR}~\psi$ is a CR-invariant (recall
that $\psi$ is the natural volume form on $M$ defined by the contact form
$\theta$). We remark that the integral of $Q_{\theta}^{CR}$ vanishes for any
three-dimensional CR-manifold because the integrand is a total divergence (see
\cite{FH:2003}, Proposition 3.2 and comments below), while under the condition
of our Theorem \ref{thm.2}, there is a pseudo-Hermitian structure for which
$Q_{\theta}^{CR}=0$ (see \cite{FH:2003}, Proposition 3.1). In our case, if
$\varphi$ is a globally defined approximate solution of the Monge-Amp\`{e}re
equation, the induced contact form $\theta=\left(  i/2\right)  (\overline
{\partial}\varphi-\partial\varphi)$ on $M$ is an `invariant contact form' in
the language of \cite{FH:2003}, and they show in Proposition 3.1 that
$Q_{\theta}^{CR}=0$ for an invariant contact form. Thus it is not clear at
present under what circumstances this invariant is nontrivial for a general,
strictly pseudoconvex manifold.

Finally, we prove a CR-analogue of Graham and Zworski's result (\cite{GZ:2003}%
, Theorem 3) using scattering theory.

\begin{theorem}
\label{thm.3}Suppose that $X$ is a compact complex manifold with strictly
pseudoconvex boundary $M$, and $g$ is an approximate-K\"{a}hler-Einstein
metric given by a globally defined approximate solution of the
Monge-Amp\`{e}re equation. Then%
\begin{equation}
\operatorname*{vol}\nolimits_{g}\left\{  -\varphi>\varepsilon\right\}
=c_{0}\varepsilon^{-n-1}+c_{1}\varepsilon^{-n}+\cdots+c_{n}\varepsilon
^{-1}+L\log(-\varepsilon)+V+o(1). \label{eq.vol}%
\end{equation}
where%
\[
L=c_{m}\int_{M}Q_{\theta}^{CR}~\psi=0
\]

\end{theorem}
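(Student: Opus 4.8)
The plan is to obtain the volume expansion from the asymptotic behavior of the Poisson map via a Green's-identity argument, exactly in the spirit of Graham and Zworski (\cite{GZ:2003}, Theorem 3), adapted to the $\Theta$-metric setting. First I would set $s(m-s)=m^2/4+t^2$ and study the constant function $u_s=\mathcal P(s)1$, which solves $(\Delta_g-s(m-s))u_s=0$ with leading behavior $(-\varphi)^{m-s}$ at $M$. Integrating the identity $(\Delta_g-s(m-s))u_s=0$ against $u_{\bar s}$ (or against $1$) over the region $\{-\varphi>\varepsilon\}$ and using Green's formula produces a boundary term on $\{-\varphi=\varepsilon\}$ whose $\varepsilon\to0$ asymptotics are governed by the two-term expansion $u_s=(-\varphi)^{m-s}F_s+(-\varphi)^sG_s$ with $F_s|_M=1$ and $G_s|_M=S_X(s)1$; on the other hand the bulk term is $s(m-s)\int_{\{-\varphi>\varepsilon\}}|u_s|^2\,dV_g$ (when pairing with $u_{\bar s}$) or $s(m-s)\int u_s\,dV_g$ (when pairing with $1$), and the latter integral, expanded in $\varepsilon$, carries the volume asymptotics we want because $u_s\to 1$ as $s\to m$.

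Next I would take $s\to m$ carefully. As in \cite{GZ:2003}, the renormalized volume $V$ and the log-coefficient $L$ emerge from the finite part and the residue, respectively, of a meromorphic family in $s$ near $s=m$. Concretely, I expect to show that $\int_{\{-\varphi>\varepsilon\}}dV_g$ has an expansion of the form \eqref{eq.vol}, that the coefficients $c_0,\dots,c_n$ are locally determined integrals of the metric expansion over $M$ (hence insensitive to the global structure), and that the coefficient $L$ of $\log(-\varepsilon)$ equals a universal constant times $\int_M$ of the coefficient of $(-\varphi)^m\log(-\varphi)$ in the expansion of the constant-function solution — which by the discussion preceding Theorem \ref{thm.1} (the Poisson map at $s=m=m/2+k/2$ with $k=m=n+1$ has the form $(-\varphi)^{0}F+(-\varphi)^{m}\log(-\varphi)G$ with $G|_M=c_mP_m1=c_mQ_\theta^{CR}$) is precisely $c_m\int_M Q_\theta^{CR}\,\psi$. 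This is where Theorem \ref{thm.2} is invoked: $\lim_{s\to m}S_X(s)1=c_mQ_\theta^{CR}$ identifies the log-term coefficient with the $Q$-curvature integral.

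Then I would close the argument by showing $L=0$. Under the hypothesis that $\varphi$ is a globally defined approximate solution of the Monge-Amp\`ere equation, the induced contact form $\theta=(i/2)(\bar\partial\varphi-\partial\varphi)$ is an invariant contact form in the sense of Fefferman-Hirachi, and by their Proposition 3.1, $Q_\theta^{CR}=0$ identically for such $\theta$; hence $\int_M Q_\theta^{CR}\,\psi=0$ and therefore $L=c_m\int_M Q_\theta^{CR}\,\psi=0$. (Alternatively, in dimension three the integrand is a total divergence by \cite{FH:2003}, but the invariant-contact-form argument covers all dimensions at once.) It remains to confirm the pole/finite-part bookkeeping: that the $\varepsilon\to0$ limit genuinely isolates a $\log$ term with this coefficient and a finite remainder $V+o(1)$, which follows from the structure of the asymptotic expansion of solutions established via Lemma \ref{lemma.formal} and the Epstein-Melrose-Mendoza/Graham-Lee expansion of $\Delta_g$.

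The main obstacle I anticipate is the careful Green's-identity computation of the boundary term on $\{-\varphi=\varepsilon\}$ in the $\Theta$-metric: one must track how the volume form $dV_g\sim(-\varphi)^{-m-1}(1+\cdots)$ and the normal derivative interact with the two-term asymptotics of $u_s$, ensure that divergent powers $\varepsilon^{-n-1},\dots,\varepsilon^{-1}$ cancel against the bulk integral to leave exactly one $\log$ and a finite part, and verify that the coefficient of $\log(-\varepsilon)$ picks up precisely the global integral $\int_M G|_M\,\psi$ with $G|_M=\lim_{s\to m}S_X(s)1$ rather than some local curvature correction. Once the pairing is organized so that the only term surviving as $s\to m$ in the "anomalous" part is this boundary integral, the conclusion $L=0$ is immediate from Theorem \ref{thm.2} and Fefferman-Hirachi's vanishing of $Q^{CR}$ for invariant contact forms.
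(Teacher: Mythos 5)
Your proposal is correct and follows essentially the same route as the paper: the authors also compute $\operatorname*{FP}_{\varepsilon\downarrow0}\int_{x>\varepsilon}[|du_s|^2-s(m-s)u_s^2]\,\omega^m$ for $u_s=\mathcal{P}(s)1$ in two ways — once via the extended boundary pairing formula (Proposition \ref{prop.bp.extended}), giving $-m\int_M S_X(s)1\,\psi$, and once by direct asymptotic expansion, where the $u_s\to1$ limit produces the volume expansion and isolates $L$ — then let $s\to m$, invoke Theorem \ref{thm.2} to identify the limit with $c_m\int_M Q_\theta^{CR}\,\psi$, and conclude $L=0$ from the Fefferman--Hirachi vanishing of $Q_\theta^{CR}$ for the invariant contact form induced by the global Monge--Amp\`ere solution. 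The bookkeeping you flag as the main obstacle is exactly the content of the paper's Appendix \ref{app.grunge}.
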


We remark that Seshadri \cite{Seshadri:2004} already showed that $L$ is, up to
a constant, the integral of $Q_{\theta}^{CR}$. It is worth noting that our
choice of defining function differs from Seshadri's.

\textbf{Acknowledgements}. We are grateful to Charles Epstein, Robin Graham,
John Lee, and Ant\^{o}nio S\'{a} Barreto for helpful discussions and
correspondence. We specifically acknowledge Robin Graham for a private lecture
on the Monge-Amp\`{e}re equation on which section \ref{sec.monge-ampere} is
based. PAP thanks the National Center for Theoretical Science at the National
Tsing Hua University, Hsinchu, Taiwan, for hospitality during part of the time
that this work was done. SHT thanks the Department of Mathematics at the
University of Kentucky for hospitality during part of the time that this work
was done.

\section{Geometric Preliminaries}

\label{sec.prelim}

\subsection{CR\ Manifolds}

\label{sec.prelim.CR}

Suppose that $M$ is a smooth orientable manifold of real dimension $2n+1$, and
let $\mathbb{C}TM=TM\otimes_{\mathbb{R}}\mathbb{C}$ be the complexified
tangent bundle on $M$. A \emph{CR-structure }on $M$ is a complex
$n$-dimensional subbundle $\mathcal{H}$ of $\mathbb{C}TM$ with the property
that $\mathcal{H}\cap\overline{\mathcal{H}}=\left\{  0\right\}  $. If, also,
$\left[  \mathcal{H},\mathcal{H}\right]  \subseteq\mathcal{H}$, we say that
the CR-structure is \emph{integrable}. If we set $H=\operatorname{Re}%
\mathcal{H}$, then the bundle $H$ has real codimension one in $TM$. The map
\begin{align*}
J :H  &  \rightarrow H\\
V+\overline{V}  &  \mapsto i(V-\overline{V})
\end{align*}
satisfies $J^{2}=-I$ and gives $H$ a natural complex structure.

Since $M$ is orientable, there is a nonvanishing one-form $\theta$ on $M$ with
$\ker\theta=H$. This form is unique up to multiplication by a positive, nonvanishing function
$f \in \mathcal{C}^{\infty} (M)$. A choice of such a one-form $\theta$ is
called a \emph{pseudo-Hermitian structure} on $M$. The \emph{Levi form} is
given by%
\begin{equation}
L_{\theta}(V,\overline{W})=-id\theta(V,\overline{W}). \label{eq.Levi}%
\end{equation}
for $V,W\in\mathcal{H}$ (here $d\theta$ is extended to $\mathcal{H}$ by
complex linearity). Note that
\begin{equation}
L_{f\theta}=fL_{\theta} \label{eq.Levi.conformal}%
\end{equation}
since $\theta$ annihilates $\mathcal{H}$. If $d\theta$ is nondegenerate, then
there is a unique real vector field $T$ on $M$, the \emph{characteristic
vector field} $T$, with the properties that $\theta(T)=1$ and $T~\lrcorner
~d\theta=0$. If $\left\{  W_{\alpha}\right\}  $ is a local frame for
$\mathcal{H}$ (here $\alpha$ ranges from $1$ to $n$), then the vector fields
$\left\{  W_{\alpha},W_{\overline{\alpha}},T\right\}  $ form a local frame for
$\mathbb{C}TM$. If we choose $(1,0)$-forms $\theta^{\alpha}$ dual to the
$W_{\alpha}$ then $\left\{  \theta^{\alpha},\theta^{\overline{\alpha}}%
,\theta\right\}  $ forms a dual coframe for $\mathbb{C}TM$. We say that
$\{\theta^{\alpha}\}$ forms an admissible coframe dual to $\{W^{\alpha}\}$ if
$\theta^{\alpha}(T)=0$ for all $\alpha$. The integrability condition is
equivalent to the condition that%
\begin{equation}
d\theta=d\theta^{\alpha}=0~\operatorname{mod}~\left\{  \theta,\theta^{\alpha
}\right\}  \label{eq.CR.integrable}%
\end{equation}
The Levi form is then given by
\begin{equation}
L_{\theta}=h_{\alpha\overline{\beta}}\theta^{\alpha}\wedge\theta
^{\overline{\beta}} \label{eq.Levi.loc}%
\end{equation}
for a Hermitian matrix-valued function $h_{\alpha\overline{\beta}}$. We will
use $h_{\alpha\overline{\beta}}$ to raise and lower indices in this article.

We will say that a given CR-structure is \emph{strictly pseudoconvex} if
$L_{\theta}$ is positive definite. Note that (up to sign)\ this condition is
independent of the choice of pseudo-Hermitian structure $\theta$.

In what follows, we will always suppose that $M$ is orientable and that $M$
carries a strictly pseudoconvex, integrable CR-structure. In this case, the
pseudo-Hermitian geometry of $M$ can be understood in terms of the
\emph{Tanaka-Webster connection} on $M$ (see Tanaka \cite{Tanaka:1975} and
Webster \cite{Webster:1978}). With respect to the frame discussed above, the
Tanaka-Webster connection is given by
\begin{equation}
\nabla W_{\alpha}=\omega_{\alpha}^{~~\beta}\otimes W_{\beta},~\nabla T=0
\label{eq.webster}%
\end{equation}
for connection one-forms $\omega_{\alpha}^{~~\beta}$ obeying the structure
equations%
\begin{align*}
d\theta^{\alpha}  &  =\theta^{\beta}\wedge\omega_{\alpha}^{~~\beta}%
+\theta\wedge\tau^{\alpha}\\
d\theta &  = ih_{\alpha\overline{\beta}}\theta^{\alpha}\wedge\theta
^{\overline{\beta}}%
\end{align*}
where the torsion one-forms are given by%
\[
\tau^{\alpha}=A_{~~\overline{\beta}}^{\alpha}\theta^{\overline{\beta}},
\]
with $A_{\alpha\beta}=A_{\beta\alpha}.$ The connection obeys the compatibility
condition%
\[
dh_{\alpha\overline{\beta}}= \omega_{\alpha\overline{\beta}}+\omega
_{\overline{\beta}\alpha}.
\]
with the Levi form described in (\ref{eq.Levi}) and (\ref{eq.Levi.loc}).

\subsection{Complex Manifolds with CR Boundary}

\label{sec.prelim.complex}

Now suppose that $X$ is a compact complex manifold of dimension $m=n+1$ with
boundary $\partial X=M$. We will denote by $\mathring{X}$ the interior of $X$.
The manifold $M$ inherits a natural CR-structure from the complex structure of
the ambient manifold. We will suppose that that $M$ is strictly pseudoconvex;
such a structure, induced by the complex structure of the ambient manifold, is
always integrable.

We will suppose that $\varphi\in\mathcal{C}^{\infty}(X)$ is a defining
function for $M$, i.e., $\varphi<0$ in $\mathring{X}$, $\varphi=0$ on $M$, and
$d\varphi(p)\neq0$ for all $p\in M$. We will further suppose that $\varphi$
has no critical points in a collar neighborhood of $M$ so that the level sets
$M^{\varepsilon}=\varphi^{-1}(-\varepsilon)$ are smooth manifolds for all
$\varepsilon$ sufficiently small.

Associated to the defining function $\varphi$ is the K\"{a}hler form%
\begin{equation}
\omega_{\varphi} =-\frac{i}{2}\partial\overline{\partial}\log(-\varphi)
=\frac{i}{2}\left(  \frac{\partial\overline{\partial}\varphi}{-\varphi}+
\frac{\partial\varphi\wedge\overline{\partial}\varphi}{\varphi^{2}}\right)
\label{eq.Kahler}%
\end{equation}
We will study scattering on $X$ with the metric induced by the K\"{a}hler form
(\ref{eq.Kahler}). Since we can cover a neighborhood of $M$ in $X$ by
coordinate charts, it suffices to consider the situation where $U$ is an open
subset of $\mathbb{C}^{m}$ and $\varphi:U\rightarrow\mathbb{R}$ is a smooth
function with no critical points in $U$, the set $\left\{  \varphi<0\right\}
$ is biholomorphically equivalent to a boundary neighborhood in $X$, and
$\{\varphi=0\}$ is diffeomorphic to the corresponding boundary neighborhood in
$M$. We will now describe the asymptotic geometry near $M$, recalling the
ambient metric of \cite{GL:1988} and computing the asymptotic form of the
metric and volume form.

The manifolds $M^{\varepsilon}$ inherit a natural CR-structure from the
ambient manifold $X$ with
\[
\mathcal{H}^{\varepsilon}=\mathbb{C}TM^{\varepsilon}\cap T^{1,0}U.
\]
Given a defining function $\varphi$, we define a one-form%
\[
\Theta=\frac{i}{2}\left(  \overline{\partial}-\partial\right)  \varphi
\]
and let
\[
\theta_{\varepsilon}=\iota_{\varepsilon}^{\ast}\Theta
\]
where $\iota_{\varepsilon}:M^{\varepsilon}\rightarrow U$ is the natural
embedding. The contact form
$\theta_{\varepsilon}$ gives $M^{\varepsilon}$ a pseudo-Hermitian structure.
We will denote by $\mathcal{H}$ the subbundle of $T^{1,0}U$
whose fibre over $M^{\varepsilon}$ is $\mathcal{H}^\epsilon$. Note that%
\[
d\Theta=i\partial\overline{\partial}\varphi.
\]
and the Levi form on $M^{\varepsilon}$ is given by
\[
L_{\theta_{\varepsilon}}=-id\theta_{\varepsilon}%
\]
We will assume that each $M^{\varepsilon}$ is strictly pseudoconvex, i.e.,
$L_{\theta_{\varepsilon}}$ is positive definite for all sufficiently small
$\varepsilon>0$. To simplify notation, we will write $\theta$
for $\theta_\epsilon$, suppressing the $\epsilon$, as the meaning will be clear
from the context.

\subsubsection{Ambient Connection}

In order to describe the asymptotic geometry of $X$, we recall the ambient
connection defined by Graham and\ Lee \cite{GL:1988} that extends the
Tanaka-Webster connection on each $M^{\varepsilon}$ to $\mathbb{C}TU$. First
we recall the following lemma from \cite{LM:1982}.

\begin{lemma}
\label{lemma.xi} There exists a unique $(1,0)$-vector field $\xi$ on $U$ so
that:%
\begin{equation}
\partial\varphi(\xi)=1 \label{eq.xi.1}%
\end{equation}
and%
\begin{equation}
\xi~\lrcorner~\partial\overline{\partial}\varphi=r~\overline{\partial}%
\varphi\label{eq.xi.2}%
\end{equation}
for some $r\in\mathcal{C}^{\infty}(U)$.
\end{lemma}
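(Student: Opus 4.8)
The plan is to recast conditions \eqref{eq.xi.1}--\eqref{eq.xi.2} as a pointwise statement about the Hermitian form $\partial\overline{\partial}\varphi$ on $T^{1,0}U$ and then read off $\xi$ and $r$ from it. Since $\varphi$ has no critical points and is real, $\partial\varphi$ --- and hence $\overline{\partial}\varphi$ --- is nowhere zero on $U$, so $\mathcal{H}=\{\,v\in T^{1,0}U:\partial\varphi(v)=0\,\}$ is a corank-one subbundle of $T^{1,0}U$, precisely the bundle introduced above with fibre $\mathcal{H}^{\varepsilon}$ over $M^{\varepsilon}$. For a $(1,0)$-field $\xi$ the contraction $\xi\,\lrcorner\,\partial\overline{\partial}\varphi$ equals the $(0,1)$-form $\partial\overline{\partial}\varphi(\xi,\,\cdot\,)$, so \eqref{eq.xi.2} asks that this $(0,1)$-form be a scalar multiple of $\overline{\partial}\varphi$; since $\overline{\partial}\varphi$ is nonzero with kernel exactly $\overline{\mathcal{H}}\subset T^{0,1}U$, that is equivalent to $\partial\overline{\partial}\varphi(\xi,\overline{W})=0$ for all $W\in\mathcal{H}$, and then $r$ is the unique scalar with $\xi\,\lrcorner\,\partial\overline{\partial}\varphi=r\,\overline{\partial}\varphi$. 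Thus the assertion of the lemma is equivalent to: there is a unique $\xi$ in the $\partial\overline{\partial}\varphi$-orthogonal complement $\mathcal{H}^{\perp}=\{\,v\in T^{1,0}U:\partial\overline{\partial}\varphi(v,\overline{W})=0\ \text{for all}\ W\in\mathcal{H}\,\}$ satisfying $\partial\varphi(\xi)=1$.

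The key point is that $\mathcal{H}^{\perp}$ is a smooth \emph{line} bundle on which $\partial\varphi$ is nowhere zero. The restriction of $\partial\overline{\partial}\varphi$ to $\mathcal{H}$ is, up to a positive factor, the Levi form of $M^{\varepsilon}$, hence positive definite because each $M^{\varepsilon}$ is strictly pseudoconvex; in particular it is nondegenerate, so the $\mathbb{C}$-linear bundle map $T^{1,0}U\rightarrow\overline{\mathcal{H}}^{\ast}$, $v\mapsto\partial\overline{\partial}\varphi(v,\,\cdot\,)|_{\overline{\mathcal{H}}}$, already restricts to an isomorphism of $\mathcal{H}$ onto $\overline{\mathcal{H}}^{\ast}$; it is therefore surjective, with kernel $\mathcal{H}^{\perp}$ a subbundle of rank one. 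Positive definiteness also forces $\mathcal{H}^{\perp}\cap\mathcal{H}=0$, so the line $\mathcal{H}^{\perp}$ is not contained in $\ker\partial\varphi=\mathcal{H}$ and $\partial\varphi$ restricts to a nonvanishing functional on it. Hence there is a unique section $\xi$ of $\mathcal{H}^{\perp}$ with $\partial\varphi(\xi)=1$, and it is smooth because all the bundles and maps in sight are. By construction $\xi$ satisfies \eqref{eq.xi.1}--\eqref{eq.xi.2}, and conversely every solution of \eqref{eq.xi.2} lies in $\mathcal{H}^{\perp}$, so together with \eqref{eq.xi.1} this also gives uniqueness. Finally, pairing $\xi\,\lrcorner\,\partial\overline{\partial}\varphi=r\,\overline{\partial}\varphi$ with $\overline{\xi}$ and using $\overline{\partial}\varphi(\overline{\xi})=\overline{\partial\varphi(\xi)}=1$ yields $r=\partial\overline{\partial}\varphi(\xi,\overline{\xi})$; this formula exhibits $r$ as a smooth function ($r\in\mathcal{C}^{\infty}(U)$), and shows that it is in fact real-valued, since $\partial\overline{\partial}\varphi$ is a real $(1,1)$-form.

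I expect the only genuine obstacle to be the verification that $\mathcal{H}^{\perp}$ is a line on which $\partial\varphi$ does not vanish --- i.e.\ recognizing that strict pseudoconvexity of the level sets is exactly the nondegeneracy needed both to cut $\mathcal{H}^{\perp}$ down to rank one and to make the normalization $\partial\varphi(\xi)=1$ attainable; the interior-product identity, the smoothness of the section, and the reality of $r$ are routine. As a concrete cross-check one may write the two conditions in the ambient holomorphic coordinates as the $(m+1)\times(m+1)$ linear system $\varphi_{j}\xi^{j}=1$, $\varphi_{j\overline{k}}\xi^{j}=r\,\varphi_{\overline{k}}$; permuting one column identifies its determinant, along $M$, with $\pm J[\varphi]$, the complex Monge-Amp\`{e}re operator of the introduction. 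Since $J[\varphi]$ restricted to $M$ is a nonzero multiple of the determinant of the Levi form, the system stays invertible in a neighborhood of $M$ by continuity, which recovers the same nondegeneracy.
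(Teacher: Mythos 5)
Your argument is correct. The paper itself gives no proof of this lemma (it is quoted from Lee--Melrose \cite{LM:1982}), and your reasoning is essentially the standard one from that source: conditions (\ref{eq.xi.1})--(\ref{eq.xi.2}) form a pointwise linear system whose solvability is exactly the nondegeneracy of the Levi form on $\mathcal{H}=\ker\partial\varphi\cap T^{1,0}U$, guaranteed here by the standing assumption that each level set $M^{\varepsilon}$ is strictly pseudoconvex. Your invariant formulation (rank-one kernel $\mathcal{H}^{\perp}$ transverse to $\mathcal{H}$, normalized by $\partial\varphi(\xi)=1$) and your coordinate ``cross-check'' via the bordered Hessian are two packagings of the same argument, and both are sound.
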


The smooth function $r$ in (\ref{eq.xi.2}) is called the \emph{transverse
curvature}.

We decompose $\xi$ into real and imaginary parts,
\begin{equation}
\xi=\frac{1}{2}(N-iT), \label{eq.xi.NT}%
\end{equation}
where $N$ and $T$ are real vector fields on $U$. It easily follows from
(\ref{eq.xi.NT}) that%
\[
d\varphi(N)=2,~\theta(N)=0
\]
and%
\[
\theta(T)=1,~T~\lrcorner~d\theta=0.
\]
Thus $T$ is the characteristic vector field for each $M^{\varepsilon}$, and
$N$ is normal to each $M^{\varepsilon}$.

Let $\left\{  W_{\alpha}\right\}  $ be a frame for $\mathcal{H}$. It follows
from Lemma \ref{lemma.xi} that $\left\{  W_{\alpha},W_{\overline{\alpha}%
},T\right\}  $ forms a local frame for $\mathbb{C}TM^{\varepsilon}$, while
$\left\{  W_{\alpha},W_{\overline{\alpha}},\xi,\overline{\xi}\right\}  $ forms
a local frame for $\mathbb{C}TU$. If $\left\{  \theta^{\alpha}\right\}  $ is a
dual coframe for $\left\{  W_{\alpha}\right\}  $, then $\left\{
\theta^{\alpha},\theta^{\overline{\alpha}},\theta\right\}  $ is a dual coframe
for $\mathbb{C}TM^{\varepsilon}$, while $\left\{  \theta^{\alpha}%
,\theta^{\overline{\alpha}},\partial\varphi,\overline{\partial}\varphi
\right\}  $ is a dual coframe for $\mathbb{C}TU$. The Levi form on each
$\mathcal{H}^{\varepsilon}$ is given by%
\[
L_{\theta}=h_{\alpha\overline{\beta}}~\theta^{\alpha}\wedge\theta
^{\overline{\beta}}%
\]
for a Hermitian matrix-valued function $h_{\alpha\overline{\beta}}$. We will
use $h_{\alpha\overline{\beta}}$ to raise and lower indices. We will set%
\[
W_{m}=\xi,~~W_{\overline{m}}=\overline{\xi},~~,\theta^{m}=\partial
\varphi,~\theta^{\overline{m}}=\overline{\partial}\varphi.
\]
In what follows, repeated Greek indices are summed from $1$ to $n$ and
repeated Latin indices are summed from $1$ to $m=n+1$.

The following important lemma decomposes the form $d\Theta$ into `tangential'
and `transverse' components.

\begin{lemma}
\label{lemma.hr}The formula%
\[
\partial\overline{\partial}\varphi=h_{\alpha\overline{\beta}}~\theta^{\alpha
}\wedge\theta^{\overline{\beta}}+r~\partial\varphi\wedge\overline{\partial
}\varphi
\]
holds.
\end{lemma}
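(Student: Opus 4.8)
The plan is to expand the $(1,1)$-form $\partial\overline{\partial}\varphi$ in the coframe $\{\theta^{\alpha},\theta^{\overline{\beta}},\partial\varphi,\overline{\partial}\varphi\}$ of $\mathbb{C}T^{\ast}U$ and to read off its coefficients by pairing with the dual frame $\{W_{\alpha},W_{\overline{\beta}},\xi,\overline{\xi}\}$ supplied by Lemma \ref{lemma.xi}. Since $\partial\overline{\partial}\varphi$ has bidegree $(1,1)$, only terms of the form $\theta^{A}\wedge\theta^{\overline{B}}$ with $A,B\in\{1,\dots,n,m\}$ (recall $\theta^{m}=\partial\varphi$) can occur, so it suffices to evaluate $\partial\overline{\partial}\varphi$ on the four types of pairs $(W_{\alpha},W_{\overline{\beta}})$, $(W_{\alpha},\overline{\xi})$, $(\xi,W_{\overline{\beta}})$, and $(\xi,\overline{\xi})$, and to check that only the $\theta^{\alpha}\wedge\theta^{\overline{\beta}}$-block and the $\partial\varphi\wedge\overline{\partial}\varphi$-term survive.

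For the components involving $\xi$ or $\overline{\xi}$ I would invoke (\ref{eq.xi.2}). Contracting $\xi\,\lrcorner\,\partial\overline{\partial}\varphi=r\,\overline{\partial}\varphi$ against $\overline{\xi}$ and against $W_{\overline{\beta}}$, and using $\overline{\partial}\varphi(\overline{\xi})=\overline{\partial\varphi(\xi)}=1$ together with $\overline{\partial}\varphi(W_{\overline{\beta}})=\overline{\partial\varphi(W_{\beta})}=0$ (the latter because $W_{\beta}$ is a $(1,0)$-vector tangent to $M^{\varepsilon}$), gives $\partial\overline{\partial}\varphi(\xi,\overline{\xi})=r$ and $\partial\overline{\partial}\varphi(\xi,W_{\overline{\beta}})=0$. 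Taking the complex conjugate of (\ref{eq.xi.2}), and using that $r$ is real-valued and $\overline{\partial\overline{\partial}\varphi}=-\partial\overline{\partial}\varphi$, yields $\overline{\xi}\,\lrcorner\,\partial\overline{\partial}\varphi=-r\,\partial\varphi$, so $\partial\overline{\partial}\varphi(W_{\alpha},\overline{\xi})=-\partial\overline{\partial}\varphi(\overline{\xi},W_{\alpha})=r\,\partial\varphi(W_{\alpha})=0$. Hence the only transverse term that survives is $r\,\partial\varphi\wedge\overline{\partial}\varphi$.

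The one step with actual content is the identification of the tangential block, $\partial\overline{\partial}\varphi(W_{\alpha},W_{\overline{\beta}})=h_{\alpha\overline{\beta}}$. Here I would use $d\Theta=i\,\partial\overline{\partial}\varphi$ together with $\theta=\iota_{\varepsilon}^{\ast}\Theta$: since $W_{\alpha},W_{\overline{\beta}}$ are tangent to $M^{\varepsilon}$, naturality of the exterior derivative gives $d\theta(W_{\alpha},W_{\overline{\beta}})=d\Theta(W_{\alpha},W_{\overline{\beta}})=i\,\partial\overline{\partial}\varphi(W_{\alpha},W_{\overline{\beta}})$, while the definition of the Levi form, $-i\,d\theta=L_{\theta}=h_{\alpha\overline{\beta}}\,\theta^{\alpha}\wedge\theta^{\overline{\beta}}$, gives $d\theta(W_{\alpha},W_{\overline{\beta}})=i\,h_{\alpha\overline{\beta}}$; comparing the two yields $\partial\overline{\partial}\varphi(W_{\alpha},W_{\overline{\beta}})=h_{\alpha\overline{\beta}}$. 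Assembling the four computations into the coframe expansion produces the asserted formula. I do not expect any genuine obstacle here; the only care needed is bookkeeping of the wedge- and interior-product sign conventions and the identity $\overline{\partial\overline{\partial}\varphi}=-\partial\overline{\partial}\varphi$.
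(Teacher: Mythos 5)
Your proof is correct. The paper itself states Lemma \ref{lemma.hr} without proof (it is imported from Graham--Lee), and your argument is the standard one: expand the $(1,1)$-form in the coframe $\{\theta^{\alpha},\theta^{\overline{\alpha}},\partial\varphi,\overline{\partial}\varphi\}$, kill the mixed terms using (\ref{eq.xi.2}) and its conjugate together with $\partial\varphi(W_{\alpha})=0$, and identify the tangential block via $d\Theta=i\partial\overline{\partial}\varphi$ and the structure equation $d\theta=ih_{\alpha\overline{\beta}}\theta^{\alpha}\wedge\theta^{\overline{\beta}}$; note also that the reality of $r$, which you use when conjugating (\ref{eq.xi.2}), follows automatically from $\overline{\partial\overline{\partial}\varphi}=-\partial\overline{\partial}\varphi$ applied to the pair $(\xi,\overline{\xi})$.
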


Graham and Lee \cite{GL:1988} proved:

\begin{proposition}
\label{prop.ambient.connection}There exists a unique linear connection
$\nabla$ on $U$ so that

\begin{description}
\item[(a)] For any vector fields $X$ and $Y$ on $U$ tangent to some
$M^{\varepsilon}$, $\nabla_{X}Y=\nabla_{X}^{\varepsilon}Y$ where
$\nabla^{\varepsilon}$ is the pseudo-Hermitian connection on $M^{\varepsilon}$.

\item[(b)] $\nabla$ preserves $\mathcal{H}$, $N$, $T$, and $L_{\theta}$; that
is, $\nabla_{X}\mathcal{H}\subset\mathcal{H}$ for any $X\in\mathbb{C}TU$, and
$\nabla T=\nabla N=\nabla L_{\theta}=0.$

\item[(c)] If $\left\{  W_{\alpha}\right\}  $ is a frame for $\mathcal{H}$,
and $\left\{  \theta^{\alpha},\partial\varphi\right\}  $ is the dual
$(1,0)$-coframe on $U$, then%
\begin{equation}
d\theta^{\alpha}=\theta^{\beta}\wedge\varphi_{\beta}^{~~\alpha}-i\partial
\varphi\wedge\tau^{\alpha}+i(W^{\alpha}r)d\varphi\wedge\theta+\frac{1}%
{2}r~d\varphi\wedge\theta^{\alpha} \label{eq.ambient}%
\end{equation}

\end{description}
\end{proposition}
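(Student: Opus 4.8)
The plan is to follow the classical strategy for constructing the Tanaka--Webster connection, adapted to the presence of the transverse direction, splitting the argument into uniqueness, existence, and verification of property (a).

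\emph{Uniqueness.} Suppose $\nabla$ and $\nabla'$ both satisfy (a)--(c). Since both preserve $\mathcal H$, $N$, and $T$, their difference acts only on a frame $\{W_\alpha\}$ for $\mathcal H$ and lands back in $\mathcal H$, say $\nabla' W_\alpha-\nabla W_\alpha=\psi_\alpha^{~\beta}\otimes W_\beta$ for one-forms $\psi_\alpha^{~\beta}$ on $U$, while the difference vanishes on $N,T$ (hence on $\xi,\overline\xi$). Subtracting the two instances of (c) gives $\theta^\beta\wedge\psi_\beta^{~\alpha}=0$, and since both connections preserve $L_\theta$ we get $\psi_{\alpha\overline\beta}+\psi_{\overline\beta\alpha}=0$, indices lowered with $h_{\alpha\overline\beta}$. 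Expanding $\psi_\beta^{~\alpha}$ in the coframe $\{\theta^\gamma,\theta^{\overline\gamma},\partial\varphi,\overline\partial\varphi\}$, the relation $\theta^\beta\wedge\psi_\beta^{~\alpha}=0$ kills the $\theta^{\overline\gamma}$-, $\partial\varphi$- and $\overline\partial\varphi$-components (these wedge terms are independent of the $\theta^\beta\wedge\theta^\gamma$ terms) and forces the $\theta^\gamma$-coefficients to be symmetric in the two lower indices; the skew-Hermitian relation then forces those coefficients to vanish, exactly as in Webster's uniqueness argument. Hence $\psi_\alpha^{~\beta}=0$.

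\emph{Existence.} Extend a local frame $\{W_\alpha\}$ for $\mathcal H$ off $M$, with dual coframe $\{\theta^\alpha,\theta^m=\partial\varphi,\theta^{\overline m}=\overline\partial\varphi\}$. Compute $d\theta^\alpha$ and expand in wedge products of the coframe; the $d(\partial\varphi)=-\partial\overline\partial\varphi$ term and the $\partial\varphi\wedge\overline\partial\varphi$ component are controlled by Lemma \ref{lemma.hr}. One then defines the connection one-forms $\varphi_\alpha^{~\beta}$ by matching: the $\theta^\beta\wedge\theta^\gamma$ and $\theta^\beta\wedge\theta^{\overline\gamma}$ parts are absorbed, after the standard symmetrization over $h_{\alpha\overline\beta}$, into $\theta^\beta\wedge\varphi_\beta^{~\alpha}$ together with $-i\,\partial\varphi\wedge\tau^\alpha$, where $\tau^\alpha=A^\alpha_{~\overline\beta}\theta^{\overline\beta}$ and the symmetry $A_{\alpha\beta}=A_{\beta\alpha}$ is imposed to kill the residual ambiguity; the terms in which $d\varphi$ is wedged with $\theta$ or $\theta^\alpha$ are matched by the explicit transverse terms $i(W^\alpha r)\,d\varphi\wedge\theta+\tfrac12 r\,d\varphi\wedge\theta^\alpha$, with $r$ the transverse curvature of Lemma \ref{lemma.xi}. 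One checks that the resulting forms satisfy $dh_{\alpha\overline\beta}=\varphi_{\alpha\overline\beta}+\varphi_{\overline\beta\alpha}$ — tangentially this is Webster's compatibility, and in the $d\varphi$-direction it follows from $\xi\lrcorner\,\partial\overline\partial\varphi=r\,\overline\partial\varphi$ — so that $\nabla W_\alpha=\varphi_\alpha^{~\beta}\otimes W_\beta$, $\nabla T=\nabla N=0$ (equivalently $\nabla\xi=\nabla\overline\xi=0$) defines a connection preserving $\mathcal H$, $N$, $T$, and $L_\theta$, giving (b) and (c).

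\emph{Property (a).} Pulling (c) back by $\iota_\varepsilon:M^\varepsilon\to U$, the $d\varphi$-terms drop out, leaving $d\theta^\alpha=\theta^\beta\wedge\iota_\varepsilon^*\varphi_\beta^{~\alpha}+\theta\wedge\tau^\alpha$, together with $\iota_\varepsilon^*(dh_{\alpha\overline\beta})=\iota_\varepsilon^*(\varphi_{\alpha\overline\beta}+\varphi_{\overline\beta\alpha})$ and $A_{\alpha\beta}=A_{\beta\alpha}$; by uniqueness of the Tanaka--Webster connection on $M^\varepsilon$ these are precisely its structure equations, so $\iota_\varepsilon^*\varphi_\alpha^{~\beta}=\omega_\alpha^{~\beta}$ and $\nabla_X Y=\nabla^\varepsilon_X Y$ for $X,Y$ tangent to $M^\varepsilon$. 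I expect the main obstacle to be the transverse ($d\varphi$-direction) bookkeeping in the existence step: condition (c) must fix not only the tangential connection forms (which are forced to be the Tanaka--Webster forms on each slice) but also the skew-Hermitian freedom in $\nabla_N W_\alpha$, and showing that the precise coefficients $i(W^\alpha r)$ and $\tfrac12 r$ make (b), (c), and the $L_\theta$-compatibility simultaneously consistent is where the defining properties of $\xi$ and $r$ in Lemma \ref{lemma.xi} have to be used with care.
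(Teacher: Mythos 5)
The paper does not actually prove this proposition---it is quoted from Graham--Lee \cite{GL:1988}---and your outline reproduces the standard Graham--Lee/Webster argument: uniqueness from the type decomposition of $\theta^{\beta}\wedge\psi_{\beta}^{~\alpha}=0$ combined with compatibility with $L_{\theta}$, existence by decomposing $d\theta^{\alpha}$ using Lemma \ref{lemma.hr} and the defining properties of $\xi$ and $r$ from Lemma \ref{lemma.xi}, and property (a) by pulling (c) back to each slice via $\iota_{\varepsilon}^{\ast}\partial\varphi=i\theta$ and invoking uniqueness of the Tanaka--Webster connection. The outline is correct; the only points left implicit are that in the uniqueness step the torsion difference $-i\,\partial\varphi\wedge(\tau^{\prime\alpha}-\tau^{\alpha})$ also cancels (by the type decomposition, since it is the only term of type $\partial\varphi\wedge\theta^{\overline{\beta}}$), and the explicit transverse bookkeeping in the existence step, which you rightly identify as the delicate part.
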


The connection $\nabla$ is called the \emph{ambient connection}.

\subsubsection{K\"{a}hler Metric}

Using Lemma \ref{lemma.hr}, we can also compute the K\"{a}hler form%
\begin{equation}
\omega_{\varphi}=\frac{i}{2}\left(  \frac{1}{-\varphi}h_{\alpha\overline
{\beta}}\theta^{\alpha}\wedge\theta^{\overline{\beta}}+\frac{1-r\varphi
}{\varphi^{2}}\partial\varphi\wedge\overline{\partial}\varphi\right)  .
\label{eq.Kahler.2}%
\end{equation}
The induced Hermitian metric is%
\begin{equation}
g_{\varphi}=\frac{1}{-\varphi}h_{\alpha\overline{\beta}}\theta^{\alpha}%
\otimes\theta^{\overline{\beta}}+\frac{1-r\varphi}{\varphi^{2}}\partial
\varphi\otimes\overline{\partial}\varphi. \label{eq.gphi}%
\end{equation}
It is easily computed that
\[
g_{\varphi}(N,N)=4\frac{1-r\varphi}{\varphi^{2}}%
\]
so that the outward unit normal field associated to the surfaces
$M^{\varepsilon}$ is%
\begin{equation}
\nu=\frac{-\varphi}{2\sqrt{1-r\varphi}}N \label{eq.nu}%
\end{equation}
We note for later use that the induced volume form $\omega_{\varphi}^{m}$ is
given by%
\begin{equation}
\omega_{\varphi}^{m}=\left(  \frac{i}{2}\right)  ^{m}\left(  \frac{1-r\varphi
}{(-\varphi)^{m+1}}\det\left(  h_{\alpha\overline{\beta}}\right)  ~\theta
^{1}\wedge\theta^{\overline{1}}\wedge\cdots\wedge\theta^{m}\wedge
\theta^{\overline{m}}\right)  \label{eq.volume}%
\end{equation}
while%
\begin{equation}
\left.  \nu~\lrcorner~\omega_{\varphi}^{m}\right\vert _{M^{\varepsilon}}%
=\frac{m}{2^{n-1}}\frac{1-r\varepsilon}{\varepsilon^{m}}\left(  d\theta
_{\varepsilon}\right)  ^{n}\wedge\theta_{\varepsilon} \label{eq.surface}%
\end{equation}
We will set%
\begin{equation}
\psi=\frac{m}{2^{n-1}}\left(  d\theta\right)  ^{n}\wedge\theta\label{eq.psi}%
\end{equation}

We also note for later use that if $u\in\mathcal{C}^{\infty}(X)$ and%
\begin{equation}
du=u_{\alpha}\theta^{\alpha}+u_{\overline{\alpha}}\theta^{\overline{\alpha}%
}+u_{m}\partial\varphi+u_{\overline{m}}\overline{\partial}\varphi\label{eq.du}%
\end{equation}
then%
\begin{equation}
\left\vert du\right\vert _{g_{\varphi}}^{2}=-\varphi h^{\alpha\overline{\beta
}}u_{\alpha}u_{\overline{\beta}}+\frac{\varphi^{2}}{1-r\varphi}u_{m}%
u_{\overline{m}} \label{eq.du.norm}%
\end{equation}


\subsection{The Laplacian on $X$}

\label{sec.prelim.Laplace}

The Laplacian on the K\"{a}hler manifold $(X,\omega_{\varphi})$ is the
operator\footnote{Note that our definition differs from that of Graham and Lee
by an overall factor of -$1/4$.}%
\begin{align}
\Delta_{\varphi}u  &  =\operatorname*{Tr}\left(  i\partial\overline{\partial
}u\right) \label{eq.Bergmann.Laplacian}\\
&  =g^{j\overline{k}}u_{j\overline{k}}\nonumber
\end{align}
for $u\in\mathcal{C}^{\infty}(X)$, where we now write $\Delta_{\varphi}$
rather than $\Delta_{g}$ to emphasize the dependence of $\Delta$ on the
boundary defining function $\varphi$.

Graham and Lee \cite{GL:1988} computed the Laplacian in a collar neighborhood
of $M$, separating `normal' and `tangential' parts. To state their results,
recall that the sub-Laplacian is defined on each $M^{\varepsilon}$ by%
\begin{equation}
\Delta_{b}u=\left(  u_{\alpha}^{~~\alpha}+u_{\overline{\beta}}^{~~\overline
{\beta}}\right)  \label{eq.sublap}%
\end{equation}
where covariant derivatives are taken with respect to the Tanaka-Webster
connection on $M^{\varepsilon}$.

Graham and Lee \cite{GL:1988} proved:

\begin{theorem}
The formula%
\begin{equation}
\Delta_{\varphi}=\frac{\varphi}{4}\left[  \dfrac{-\varphi}{1-r\varphi}\left(
N^{2}+T^{2}+2rN+2X_{r}\right)  -2\Delta_{b}+2nN\right]  \label{eq.lap}%
\end{equation}
holds, where%
\[
X_{r}=r^{\alpha}W_{\alpha}+r^{\overline{\alpha}}W_{\overline{\alpha}}.
\]

\end{theorem}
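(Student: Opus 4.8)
The plan is to compute $\Delta_{\varphi}u=\operatorname{Tr}(i\partial\overline{\partial}u)=g^{j\overline{k}}u_{j\overline{k}}$ of \eqref{eq.Bergmann.Laplacian} directly in the adapted coframe $\{\theta^{\alpha},\theta^{\overline{\beta}},\partial\varphi,\overline{\partial}\varphi\}$ of Section \ref{sec.prelim.complex}, and then to sort the outcome into the `normal' and `tangential' parts of \eqref{eq.lap}. By \eqref{eq.gphi} the metric is block diagonal in this coframe, with $g_{\alpha\overline{\beta}}=(-\varphi)^{-1}h_{\alpha\overline{\beta}}$, $g_{m\overline{m}}=(1-r\varphi)\varphi^{-2}$, and vanishing mixed components; hence $g^{\alpha\overline{\beta}}=-\varphi\,h^{\alpha\overline{\beta}}$ and $g^{m\overline{m}}=\varphi^{2}(1-r\varphi)^{-1}$, consistently with \eqref{eq.du.norm}. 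Writing $\partial\overline{\partial}u=a_{j\overline{k}}\,\theta^{j}\wedge\theta^{\overline{k}}$, the trace contracts only against the two diagonal blocks, so that $\Delta_{\varphi}u$ is, up to the overall normalization fixed in \eqref{eq.Bergmann.Laplacian}, a combination of $h^{\alpha\overline{\beta}}a_{\alpha\overline{\beta}}$ and $a_{m\overline{m}}$. Since $\overline{\partial}u$ is of type $(0,1)$ it annihilates the $(1,0)$-vectors $W_{\alpha}$ and $\xi$, so the invariant formula for the exterior derivative gives $a_{j\overline{k}}=(\partial\overline{\partial}u)(W_{j},W_{\overline{k}})=W_{j}(W_{\overline{k}}u)-\overline{\partial}u\bigl([W_{j},W_{\overline{k}}]\bigr)$; everything thus reduces to evaluating the two brackets $[W_{\alpha},W_{\overline{\beta}}]$ and $[\xi,\overline{\xi}]$ in the adapted frame.

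I would pin those brackets down next. Both are tangent to each $M^{\varepsilon}$, since $d\varphi$ annihilates them (for instance $d\varphi([N,T])=N(d\varphi(T))-T(d\varphi(N))=0$ because $d\varphi(T)=0$ and $d\varphi(N)=2$). Their $T$-components come from $d\Theta=i\partial\overline{\partial}\varphi$ together with Lemma \ref{lemma.hr}: one computes $d\Theta(\xi,\overline{\xi})=ir$, whence, since $[N,T]$ is tangent to $M^{\varepsilon}$ and $\Theta(T)=1$, the $T$-component of $[N,T]$ equals $\Theta([N,T])=-d\Theta(N,T)=-2r$; likewise $d\Theta(W_{\alpha},W_{\overline{\beta}})=ih_{\alpha\overline{\beta}}$ gives the $T$-component $-ih_{\alpha\overline{\beta}}$ of $[W_{\alpha},W_{\overline{\beta}}]$, while $\xi=\tfrac12(N-iT)$ yields $[\xi,\overline{\xi}]=\tfrac{i}{2}[N,T]$. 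The remaining, horizontal, components of $[N,T]$ are obtained by evaluating the ambient structure equation \eqref{eq.ambient} on $(N,T)$: the transverse-curvature terms $i(W^{\gamma}r)\,d\varphi\wedge\theta$ and $\tfrac12 r\,d\varphi\wedge\theta^{\gamma}$ there force the $W_{\gamma}$-component of $[N,T]$ to be a multiple of $W^{\gamma}r$. The horizontal part of $[W_{\alpha},W_{\overline{\beta}}]$ is, by Proposition \ref{prop.ambient.connection}(a),(c), precisely $\nabla_{W_{\alpha}}W_{\overline{\beta}}-\nabla_{W_{\overline{\beta}}}W_{\alpha}$ for the Tanaka--Webster connection on $M^{\varepsilon}$.

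Then I would assemble the two parts. For the normal block, substituting $[\xi,\overline{\xi}]=\tfrac{i}{2}[N,T]$ into $a_{m\overline{m}}=\xi(\overline{\xi}u)-\overline{\partial}u([\xi,\overline{\xi}])$ and expanding $\xi(\overline{\xi}u)=\tfrac14\bigl(N^{2}+T^{2}+i[N,T]\bigr)u$: the spurious $Tu$ produced by $i[N,T]u$ cancels against the one coming from the $-irT$ part of $[\xi,\overline{\xi}]$, whose surviving $rN$-part yields the $2rN$ term, while the $W^{\gamma}r$-terms recombine into $X_{r}u$, leaving $a_{m\overline{m}}=\tfrac14\bigl(N^{2}+T^{2}+2rN+2X_{r}\bigr)u$; multiplying by $g^{m\overline{m}}=\varphi^{2}(1-r\varphi)^{-1}$ gives, up to sign, the first group of \eqref{eq.lap}. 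For the tangential block, $W_{\alpha}(W_{\overline{\beta}}u)$ together with the connection correction from the horizontal part of $[W_{\alpha},W_{\overline{\beta}}]$ is the Tanaka--Webster sub-Hessian $\nabla_{\alpha}\nabla_{\overline{\beta}}u$, while the $T$-component $-ih_{\alpha\overline{\beta}}$ of the bracket contributes a term $h_{\alpha\overline{\beta}}\,\overline{\xi}u$; tracing with $h^{\alpha\overline{\beta}}$, using $h^{\alpha\overline{\beta}}h_{\alpha\overline{\beta}}=n$, the reality of $u$, and the commutation identity for covariant derivatives of a scalar (the mixed Tanaka--Webster torsion being a multiple of $h_{\alpha\overline{\beta}}T$), one gets a multiple of $\Delta_{b}u$ of \eqref{eq.sublap} together with a multiple of $Nu$, the spurious $Tu$ terms cancelling between the commutator and $n\overline{\xi}u$; multiplying by $g^{\alpha\overline{\beta}}=-\varphi h^{\alpha\overline{\beta}}$ gives the second group $\tfrac{\varphi}{4}(-2\Delta_{b}+2nN)$. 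Adding the two groups and matching the overall sign and normalization to \eqref{eq.Bergmann.Laplacian} (equivalently to \eqref{eq.Kahler}) gives \eqref{eq.lap}.

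The main obstacle is the bracket and commutator bookkeeping of the previous two steps. One must check (i) that the purely frame-theoretic derivatives $W_{\alpha}(W_{\overline{\beta}}u)$, together with the ambient connection forms $\varphi_{\beta}^{~~\alpha}$ of \eqref{eq.ambient}, genuinely assemble into the tensorial sub-Hessian, so that $\Delta_{b}$ of \eqref{eq.sublap}---and not some non-invariant expression---survives the trace; (ii) that every first-order $T$-derivative cancels, these arising both from the $T$-components of the two brackets and from the $\xi$- and $\overline{\xi}$-derivatives, and summing to zero precisely because $\partial\overline{\partial}u$ is genuinely of type $(1,1)$, equivalently because the CR structure is integrable; and (iii) that the transverse curvature enters the normal block only through $d\Theta(N,T)=2r$ and the horizontal part of $[N,T]$, producing exactly the $2rN$ and $2X_{r}$ terms. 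Keeping track of the overall sign and of the factors of $2$ then reduces to a careful comparison with the conventions fixed in \eqref{eq.Bergmann.Laplacian} and \eqref{eq.Kahler}.
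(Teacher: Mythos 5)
The paper offers no proof of this theorem at all: it simply quotes the formula from Graham--Lee \cite{GL:1988}. Your direct computation is essentially the standard derivation and its structure is sound. In particular your treatment of the normal block checks out exactly: with $\xi=\tfrac12(N-iT)$ one has $[\xi,\overline{\xi}]=\tfrac{i}{2}[N,T]$, the identity $d\Theta(N,T)=-2i\,d\Theta(\xi,\overline{\xi})=2r$ gives $\Theta([N,T])=-2r$, the structure equation (\ref{eq.ambient}) evaluated on $(N,T)$ gives $\theta^{\alpha}([N,T])=-2iW^{\alpha}r$ (only the $i(W^{\alpha}r)\,d\varphi\wedge\theta$ term survives, since $\theta^{\alpha}$, $\tau^{\alpha}$ annihilate $N$ and $T$), and assembling $\xi(\overline{\xi}u)-\overline{\partial}u([\xi,\overline{\xi}])$ does produce $\tfrac14(N^{2}+T^{2}+2rN+2X_{r})u$ with the $Tu$ terms cancelling and the two halves of the gradient recombining into $X_{r}$, exactly as you say. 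The tangential block likewise reduces, as you indicate, to the torsion identity $[W_{\alpha},W_{\overline{\beta}}]=\nabla_{W_{\alpha}}W_{\overline{\beta}}-\nabla_{W_{\overline{\beta}}}W_{\alpha}+ih_{\alpha\overline{\beta}}T$ (up to sign convention) and the scalar commutation identity, which is the content of Proposition \ref{prop.ambient.connection}.

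One caution about the step you defer to the end. The final sign and normalization \emph{cannot} be fixed by ``comparison with the conventions of (\ref{eq.Bergmann.Laplacian})'' alone, because the paper's own formulas in this section are not mutually consistent: the rewriting of (\ref{eq.lap}) in the variable $x=-\varphi$ immediately below it disagrees in sign with (\ref{eq.lap}) itself (e.g.\ the $N^{2}$ term of (\ref{eq.lap}) yields $-x^{2}\partial_{x}^{2}/(1+rx)$, not $+(x\partial_{x})^{2}/(1+rx)$), and (\ref{eq.next}) is likewise off from the $-2\Delta_{b}$ term of (\ref{eq.lap}) by a sign. If you carry out your computation literally, with $g^{m\overline{m}}=\varphi^{2}(1-r\varphi)^{-1}$ and $u_{m\overline{m}}=\partial\overline{\partial}u(\xi,\overline{\xi})$, you get the normal block with the \emph{opposite} sign to (\ref{eq.lap}); the discrepancy is absorbed by the normalization alluded to in the footnote to (\ref{eq.Bergmann.Laplacian}) (the authors' $\Delta_{\varphi}$ is $-1/4$ times Graham--Lee's operator). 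The unambiguous way to pin the conventions down is to match against the indicial operator (\ref{eq.normal}), which is what the rest of the paper actually uses: requiring $L_{0}-s(m-s)$ to annihilate $x^{s}$ forces $L_{0}=-\left((x\partial_{x})^{2}-mx\partial_{x}\right)$, and this is consistent with (\ref{eq.lap}) as stated. So your proof is right, but the ``careful comparison of conventions'' should be made against (\ref{eq.normal}) rather than against the intermediate displays, which contain typographical sign errors.
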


It will be useful to recast the above formula for $\Delta_{\varphi}u$ in terms
of $x=-\varphi$. Note that
\begin{equation}
N=2\frac{\partial}{\partial\varphi}=-2\frac{\partial}{\partial x}
\label{eq.N.x}%
\end{equation}
so that%
\begin{align*}
\Delta_{\varphi}u  &  =\left(  \frac{1}{1+rx}\right)  \left(  x\frac{\partial
}{\partial x}\right)  ^{2}u-(n+1)x\frac{\partial}{\partial x}\\
&  +\frac{1}{4}\left(  \frac{x^{2}}{1+rx}\right)  \left(  T^{2}u-2ru_{x}%
+2X_{r}u\right) \\
&  +\frac{1}{4}x\left(  -2\Delta_{b}u\right)
\end{align*}
We think of $\Delta_{\varphi}$ as a variable-coefficient differential operator
with respect to vector fields $\left(  x\partial_{x}\right)  $ and vector
fields tangent to the boundary $M$. In a neighborhood of $M$ we have%
\begin{equation}
\Delta_{\varphi}\sim\sum_{k\geq0}x^{k}L_{k} \label{eq.Deltag.exp}%
\end{equation}
for differential operators $L_{k}$, where the indicial operator $L_0$ is%
\begin{equation}
L_{0}=-\left(  \left(  x\frac{\partial}{\partial x}\right)  ^{2}%
-mx\frac{\partial}{\partial x}\right)  \label{eq.normal}%
\end{equation}
and the operator $L_1$ is
\begin{equation}
L_{1}=\frac{1}{4}\left(  -2\Delta_{b}u-4r_{0}x\frac{\partial}{\partial
x}-4r_{0}\left[  \left(  x\frac{\partial}{\partial x}\right)  ^{2}%
-x\frac{\partial}{\partial x}\right]  \right)  \label{eq.next}%
\end{equation}
where%
\[
r=r_{0}+\mathcal{O}(x) .
\]


\subsection{The Complex Monge-Amp\`{e}re Equation}

\label{sec.monge-ampere}

\subsubsection{Local Theory}

\label{sec.monge-ampere.local}

Let $\Omega$ be a domain in $\mathbb{C}^{m}$ with smooth boundary. The complex
Monge-Amp\`{e}re equation is the nonlinear equation%
\begin{align*}
J\left[  u \right]   &  =1\\
\left.  u\right\vert _{\partial\Omega}  &  =0
\end{align*}
for a function $u\in\mathcal{C}^{\infty}(\Omega)$, $u>0$ on $\Omega$, where $J\left[  u
\right]  $ is the Monge-Amp\`{e}re operator:%
\begin{equation}
J\left[  u \right]  =(-1)^{m}\det\left(
\begin{array}
[c]{cc}%
u & u_{\overline{j}}\\
u_{i} & u_{i\overline{j}}%
\end{array}
\right) . \label{eq.J}%
\end{equation}
If $u$ solves the complex Monge-Amp\`{e}re equation then
\[
- \left(  \log\left(  \frac{1}{u}\right)  \right)  _{j\overline{k}}%
~dz^{j}\otimes~dz^{\overline{k}}%
\]
is a K\"{a}hler-Einstein metric.

Fefferman \cite{Fefferman:1976} showed that there is a smooth function
$\psi\in\mathcal{C}^{\infty}(\Omega)$ that satisfies
\begin{align*}
J[\varphi]  &  =1+\mathcal{O}(\varphi^{m+1})\\
& \\
\left.  \varphi\right\vert _{\partial\Omega}  &  =0
\end{align*}
and that $\psi$ is uniquely determined up to order $m+1$. Cheng and Yau
\cite{CY:1980} showed the existence of an exact solution belonging to
$\mathcal{C}^{\infty}(\Omega)\cap C^{m+3/2-\epsilon}(\overline{\Omega})$, while
Lee and Melrose \cite{LM:1982} showed that the exact solution has an
asymptotic expansion with logarithmic terms beginning at order $m+2$.

We will show that Fefferman's local approximate solution of the
Monge-Amp\`{e}re equation \cite{Fefferman:1976} can be globalized to an
approximate solution of the Monge-Amp\`{e}re equation near the boundary of a
complex manifold $X$. We will see later that, to globalize Fefferman's
construction, we need to impose a geometric condition on the CR-structure of
$M$ inherited from the complex structure of $X$. For the convenience of the
reader, we review the properties of the operator $J$ under a holomorphic
coordinate change and the connection between solutions of the Monge-Amp\'ere
equation and K\"ahler-Einstein metrics.

If $f:\Omega\subset\mathbb{C}^{m}\rightarrow\mathbb{C}^{m}$ is holomorphic,
then $f^{\prime}$ denotes the matrix%
\[
(f^{\prime})_{jk}=\frac{\partial f_{j}}{\partial z_{k}} .%
\]

\begin{lemma}
\label{lemma.f}Let $f$ be a local biholomorphism. Then, for any local, smooth
function $u$ on $\Omega$,%
\[
J\left[  \left\vert \det(f^{\prime})\right\vert ^{-2/(m+1)}\left(  u\circ
f\right)  \right]  =J\left[  u \right]  \circ f .
\]

\end{lemma}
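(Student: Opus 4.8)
The plan is to recognize $J[u]$ as a weighted determinant of the complex Hessian of $\log u$, a quantity whose behaviour under holomorphic maps is transparent, and to read the transformation law off from that. First I would reduce to the case $u>0$. The operator $J$ in \eqref{eq.J} is a polynomial in the $2$-jet $(u,u_{i},u_{\overline{j}},u_{i\overline{j}})$ of $u$; hence both sides of the asserted identity, evaluated at a point $p$, are polynomials in the $2$-jet of $u$ at $f(p)$, with coefficients that are smooth functions built from $f$ and $\det f'$. Two such polynomials which agree on the open set where $u>0$ must agree identically, so it is enough to treat positive $u$.

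For $u>0$, I would take the Schur complement of the bordered matrix in \eqref{eq.J} relative to its $(0,0)$-entry $u$, obtaining
\[
J[u]=(-1)^{m}\,u\,\det\!\left(u_{i\overline{j}}-u^{-1}u_{i}u_{\overline{j}}\right)=(-1)^{m}\,u^{\,m+1}\,\det\!\left((\log u)_{i\overline{j}}\right),
\]
since $(\log u)_{i\overline{j}}=u^{-1}u_{i\overline{j}}-u^{-2}u_{i}u_{\overline{j}}$. This is the algebraic form of the remark following \eqref{eq.J}: $-\partial\overline{\partial}\log(1/u)$ is the K\"ahler form of a metric whose Monge--Amp\`ere determinant is $(-1)^{m}u^{-(m+1)}J[u]$.

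Now set $c=|\det f'|^{-2/(m+1)}$ and $v=c\,(u\circ f)$, so that $\log v=\log c+(\log u)\circ f$. The key observation is that $\log c=-\tfrac{1}{m+1}\log|\det f'|^{2}$ is pluriharmonic: $\det f'$ is a nowhere-vanishing holomorphic function, so $\log|\det f'|^{2}$ is locally twice the real part of a holomorphic function, and therefore $(\log c)_{k\overline{l}}\equiv 0$. Since $f$ is holomorphic the chain rule introduces no antiholomorphic terms, and differentiating $\partial_{w_{k}}\big((\log u)\circ f\big)=\sum_{i}(f')_{ik}\,\big((\log u)_{i}\circ f\big)$ once more gives
\[
(\log v)_{k\overline{l}}=\sum_{i,j}(f')_{ik}\,\overline{(f')_{jl}}\,\big((\log u)_{i\overline{j}}\circ f\big),
\]
so that $\det\big((\log v)_{k\overline{l}}\big)=|\det f'|^{2}\,\big(\det((\log u)_{i\overline{j}})\circ f\big)$. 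Combined with $v^{\,m+1}=c^{\,m+1}(u^{\,m+1}\circ f)=|\det f'|^{-2}(u^{\,m+1}\circ f)$, the two powers of $|\det f'|^{2}$ cancel in the displayed formula for $J$, and I conclude $J[v]=(-1)^{m}v^{\,m+1}\det((\log v)_{k\overline{l}})=\big(J[u]\big)\circ f$, as claimed.

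There is no serious obstacle here; the only points that need care are the reduction to positive $u$ above and the observation that the vanishing of $(\log c)_{k\overline{l}}$ requires no global branch of $\log\det f'$. An alternative route that avoids logarithms entirely is Fefferman's device of working one dimension up: for $u^{\#}(z_{0},z):=|z_{0}|^{2}u(z)$ on $\mathbb{C}^{\ast}\times\Omega$ a short computation gives $\det\big(u^{\#}_{a\overline{b}}\big)=(-1)^{m}|z_{0}|^{2m}J[u](z)$, and $v^{\#}=u^{\#}\circ F$ for the local biholomorphism $F(z_{0},w)=\big(z_{0}\,(\det f'(w))^{-1/(m+1)},\,f(w)\big)$, whose Jacobian satisfies $|\det F'|^{2}=|\det f'|^{2m/(m+1)}$; the transformation law $\det\big((g\circ F)_{a\overline{b}}\big)=|\det F'|^{2}\,\big(\det(g_{a\overline{b}})\circ F\big)$ for complex Hessians then yields the identity directly. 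Either way the structural content is that a suitably weighted $J[u]$ is a biholomorphic density, which is exactly what will let Fefferman's local construction be patched together in Section \ref{sec.monge-ampere}.
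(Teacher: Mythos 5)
Your argument is correct and is essentially the paper's own proof: it rests on the identity $J[u]=u^{m+1}\det\bigl[(\log(1/u))_{j\overline{k}}\bigr]$ of Lemma \ref{lemma.J.alt}, the pluriharmonicity of $\log\left\vert\det f^{\prime}\right\vert^{2}$, and the transformation law for complex Hessians under holomorphic maps, with the two factors of $\left\vert\det f^{\prime}\right\vert^{2}$ cancelling exactly as in the text. Your explicit reduction to $u>0$ and the alternative Fefferman-style argument one dimension up are welcome additions but do not change the route.
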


A proof was given by Fefferman in \cite{Fefferman:1976}. Here we give an
alternative proof using the following identity.

\begin{lemma}
\label{lemma.J.alt}The formula%
\begin{equation}
J\left[  u \right]  =u^{m+1}\det\left[  \left(  \log\left(  \frac{1}%
{u}\right)  \right)  _{j\overline{k}}\right]  \label{eq.J.alt}%
\end{equation}
holds.
\end{lemma}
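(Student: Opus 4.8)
The formula to prove is $J[u] = u^{m+1}\det\bigl[(\log(1/u))_{j\overline k}\bigr]$, where $J[u] = (-1)^m\det\begin{pmatrix} u & u_{\overline j}\\ u_i & u_{i\overline j}\end{pmatrix}$ is the bordered determinant. The plan is a direct computation. First I would compute the relevant second derivatives of $\log(1/u) = -\log u$. One has $(\log(1/u))_j = -u_j/u$ and hence
\[
\bigl(\log(1/u)\bigr)_{j\overline k} = -\frac{u_{j\overline k}}{u} + \frac{u_j u_{\overline k}}{u^2} = \frac{1}{u^2}\bigl(u_j u_{\overline k} - u\, u_{j\overline k}\bigr).
\]
Therefore $\det\bigl[(\log(1/u))_{j\overline k}\bigr] = u^{-2m}\det\bigl[u_j u_{\overline k} - u\,u_{j\overline k}\bigr]$, so the claim reduces to the purely algebraic identity
\[
(-1)^m\det\begin{pmatrix} u & u_{\overline j}\\ u_i & u_{i\overline j}\end{pmatrix} = u^{m+1}\cdot u^{-2m}\det\bigl[u_i u_{\overline j} - u\, u_{i\overline j}\bigr] = u^{1-m}\det\bigl[u_i u_{\overline j} - u\, u_{i\overline j}\bigr].
\]

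The key step is then a standard Schur-complement / bordered-determinant manipulation. Treating $u$ (the top-left entry) as a nonzero scalar, the block determinant formula gives
\[
\det\begin{pmatrix} u & u_{\overline j}\\ u_i & u_{i\overline j}\end{pmatrix} = u \cdot \det\Bigl[u_{i\overline j} - u_i\, u^{-1}\, u_{\overline j}\Bigr] = u \cdot u^{-m}\det\bigl[u\, u_{i\overline j} - u_i u_{\overline j}\bigr] = u^{1-m}\det\bigl[u\, u_{i\overline j} - u_i u_{\overline j}\bigr],
\]
where in the middle step I pulled a factor $u^{-1}$ out of each of the $m$ rows of the $m\times m$ matrix. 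Finally, $\det\bigl[u\,u_{i\overline j} - u_i u_{\overline j}\bigr] = (-1)^m \det\bigl[u_i u_{\overline j} - u\, u_{i\overline j}\bigr]$, which produces exactly the sign $(-1)^m$ in the definition of $J[u]$ and matches the reduced identity above. Assembling the pieces gives $J[u] = u^{m+1}\det\bigl[(\log(1/u))_{j\overline k}\bigr]$.

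There is essentially no obstacle here; the only point requiring a word of care is the justification of the Schur-complement step, which is valid pointwise wherever $u \neq 0$ (and by hypothesis $u > 0$ on $\Omega$), so the identity holds on all of $\Omega$ by continuity if one wishes to phrase it for $u \geq 0$. I would present the computation in the two displays above and remark that $u > 0$ guarantees invertibility of the scalar block throughout. The identity is then used, in combination with the transformation law $\bigl(\log(1/(|\det f'|^{-2/(m+1)}(u\circ f)))\bigr)_{j\overline k}$ and the chain rule for the complex Hessian under holomorphic change of variables, to give the alternative proof of Lemma \ref{lemma.f}.
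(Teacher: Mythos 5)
Your proposal is correct and follows essentially the same route as the paper: both compute the bordered determinant via the Schur complement (the paper phrases this as ``row-column operations''), identify $\bigl(\log(1/u)\bigr)_{j\overline k}=-u_{j\overline k}/u+u_ju_{\overline k}/u^2$, and match powers of $u$ and the sign $(-1)^m$. The only difference is cosmetic bookkeeping (you factor $u^{-2}$ out of each entry rather than $-u^{-1}$), and your final tally $u^{1-m}\det\bigl[u_iu_{\overline j}-u\,u_{i\overline j}\bigr]$ agrees with the paper's expression.
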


\begin{proof}
Using row-column operations, one proves that%
\begin{equation}
\det\left(
\begin{array}
[c]{cc}%
u & u_{\overline{k}}\\
u_{j} & u_{j\overline{k}}%
\end{array}
\right)  =u\det\left(  u_{j\overline{k}}-\frac{u_{j}u_{\overline{k}}}%
{u}\right)  . \label{eq.J1}%
\end{equation}
On the other hand, the identity%
\[
\left(  \log\left(  \frac{1}{u}\right)  \right)  _{j\overline{k}}%
=-\frac{u_{j\overline{k}}}{u}+\frac{u_{j}u_{\overline{k}}}{u^{2}}%
\]
shows that%
\begin{equation}
J[u] = (-1)^{m}u\det\left(  u_{j\overline{k}}-\frac{u_{j}u_{\overline{k}}}{u}\right)
   = u^{m+1}\det\left(  \left(  \log\left(  \frac{1}{u}\right)  \right)
_{j\overline{k}}\right)  . \label{eq.J2}%
\end{equation}
Combining (\ref{eq.J1}) and (\ref{eq.J2}) shows that (\ref{eq.J.alt}) holds.
\end{proof}

We can use the formula (\ref{eq.J.alt}) to show that if $u$ solves the
Monge-Amp\`{e}re equation, then $u$ is the K\"{a}hler potential of a
K\"{a}hler-Einstein metric. Recall that if
\[
g=v_{j\overline{k}}~dz^{j}\otimes~dz^{\overline{k}}%
\]
then the Ricci curvature is
\[
R_{a\overline{b}}=-\left[  \log\det(v_{j\overline{k}})\right]  _{a\overline
{b}}%
\]
Now let
\[
v=\log\left(  \frac{1}{u}\right)
\]
where $J\left[  u \right]  =1$. Then%
\begin{align*}
R_{a\overline{b}}  &  =-\left[  \log\det(v_{j\overline{k}})\right]
_{a\overline{b}}\\
&  =-\left[  \log\left(  u^{-(m+1)}\right)  \right]  _{a\overline{b}}\\
&  =-(m+1)\left(  \log\left(  \frac{1}{u}\right)  \right)  _{a\overline{b}}\\
&  =-(m+1)g_{a\overline{b}}%
\end{align*}
which is the Einstein equation.

Now we prove Lemma \ref{lemma.f}. First, we compute%
\begin{align}
\left[  \log\left(  \left\vert \det f^{\prime}\right\vert ^{-2/(m+1)}~u\circ
f\right)  \right]  _{j\overline{k}} &  =\frac{-1}{m+1}\left[  \log\left(
\left\vert \det f^{\prime}\right\vert ^{2}\right)  \right]  _{j\overline{k}%
}\label{eq.med}\\
&  +\left[  \log\left(  u\circ f\right)  \right]  _{j\overline{k}}\nonumber\\
&  =\left[  \log\left(  u\circ f\right)  \right]  _{j\overline{k}}\nonumber
\end{align}
where the first right-hand term vanishes because $\left\vert \det f^{\prime
}\right\vert ^{2}=\left(  \det f^{\prime}\right)  \overline{\left(  \det
f^{\prime}\right)  }$ and $\det f^{\prime}$ is holomorphic. We note that the
vanishing of the first term also shows that the K\"{a}hler metric with
K\"{a}hler potential $u$ (when $u$ solves the Monge-Amp\`{e}re equation) is
invariant whether $u$ is considered as a scalar function or a density. To
compute the nonzero right-hand term in (\ref{eq.med}) we first note that if
$f$ is a holomorphic map then we have the identity%
\[
\left(  v\circ f\right)  _{j\overline{k}}=\left[  \left(  f^{\prime}\right)
^{t}\left(  v_{a\overline{b}}\circ f\right)  \left(  \overline{f^{\prime}%
}\right)  \right]  _{j\overline{k}}.
\]
Thus, using (\ref{eq.J.alt}), we compute
\begin{align*}
J\left[  \left\vert \det (f^{\prime})\right\vert ^{-2/(m+1)}u\circ f\right]   &
=\left\vert \det (f^{\prime})\right\vert ^{-2}\left(  u\circ f\right)  ^{m+1}\\
&  \times\det\left(  \left(  f^{\prime}\right)  ^{t}\right)  \det\left(
\log\left(  \frac{1}{u}\right)  _{a\overline{b}}\circ f\right)  \det\left(
\overline{f^{\prime}}\right)  \\
&  =\left(  u\circ f\right)  ^{m+1}\det\left(  \log\left(  \frac{1}{u}\right)
_{a\overline{b}}\right)  \circ f.\\
&  =J\left[  u\right]  \circ f
\end{align*}
as was to be proved.

It is essential for our globalization argument that an approximate
solution to the Monge-Ampere equation be determined uniquely up to a
certain order. This proof was given by
Fefferman \cite{Fefferman:1976} and we repeat it for the reader's convenience.

\begin{lemma}
Any smooth, local, approximate solution $\psi \in \mathcal{C}^\infty (\Omega)$
to the Monge-Amp\`ere equation is uniquely
determined up to order $m+1$.
\end{lemma}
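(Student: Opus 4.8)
The plan is to work in local holomorphic coordinates on $\Omega\subset\mathbb{C}^m$ and set up an inductive determination of the Taylor expansion of $\psi$ in powers of a fixed smooth defining function $\rho$ with $\rho<0$ in $\Omega$. Write any candidate approximate solution as $\psi=\rho\,e^{\chi}$ (or equivalently $\psi=\rho(1+\rho\,\psi_1+\rho^2\psi_2+\cdots)$), which is forced by the boundary condition $\psi|_{\partial\Omega}=0$ together with $\psi>0$ in $\Omega$ and $d\psi\neq0$ on $\partial\Omega$. The claim is then that the coefficient functions $\psi_1,\dots,\psi_m$ (equivalently the Taylor coefficients of $\chi$ up to order $m$, equivalently $\psi$ modulo $\mathcal{O}(\rho^{m+2})$) are uniquely determined by the requirement $J[\psi]=1+\mathcal{O}(\rho^{m+1})$.

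The key computational step is to understand how $J[\psi]$ changes when $\psi$ is perturbed by a term of order $\rho^{\ell}$. Using Lemma \ref{lemma.J.alt}, $J[\psi]=\psi^{m+1}\det\big[(\log(1/\psi))_{j\overline{k}}\big]$, and differentiating, a perturbation $\psi\mapsto\psi+\rho^{\ell}\phi$ with $\ell\geq 1$ produces a change in $J[\psi]$ whose leading behaviour near $\partial\Omega$ is $\rho^{\ell-1}$ times an explicit nonzero multiple of $\phi|_{\partial\Omega}$. Concretely, $(\log(1/\psi))_{j\overline{k}}=-\psi_{j\overline{k}}/\psi+\psi_j\psi_{\overline{k}}/\psi^2$, and the dominant singular term as $\rho\to 0$ is $\psi_j\psi_{\overline{k}}/\psi^2\sim \rho_j\rho_{\overline{k}}/\rho^2$; a perturbation at order $\rho^\ell$ contributes to this rank-one-type structure and, after taking the determinant and multiplying by $\psi^{m+1}\sim\rho^{m+1}$, shifts $J[\psi]$ at order $\rho^{m+1}\cdot\rho^{\ell}\cdot\rho^{-2}=\rho^{m+\ell-1}$ — wait, one must track this carefully — the upshot one wants is the linearization: if $J[\psi]=1+\mathcal{O}(\rho^{k})$ and we add $c\,\rho^{k}$ to the relevant coefficient, the $\rho^{k}$ term in $J[\psi]-1$ changes by a nonzero constant times $c$, so it can be (and must be) killed, and this determines that coefficient. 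Iterating for $k=1,2,\dots,m$ shows each successive coefficient of $\psi$ is forced; at order $m+1$ the obstruction need not vanish (this is where the expansion genuinely stops, and logarithmic terms enter at order $m+2$ per Lee-Melrose), so uniqueness holds exactly up to order $m+1$ and no further.

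The main obstacle is establishing the nondegeneracy of the linearization at each step: one must show the linear map sending the order-$\rho^{\ell}$ perturbation coefficient (a function on $\partial\Omega$, extended arbitrarily) to the induced order-$\rho^{\ell-1}$ (or appropriate order) coefficient of $J[\psi]-1$ is given by multiplication by a nowhere-vanishing function. This is where strict pseudoconvexity is used: the Levi form $(\rho_{j\overline{k}}$ restricted to the complex tangent space, together with the transverse direction picked out by $\partial\rho$) is nondegenerate, so the relevant Hessian-type determinant appearing as the coefficient is nonzero on $\partial\Omega$, hence in a neighborhood. A clean way to organize this, following Fefferman, is to first normalize: after the change of variable in Lemma \ref{lemma.f} one may assume the defining function is already a second-order approximate solution, reducing the bookkeeping. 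The remaining steps are then routine term-by-term matching, and I would present only the inductive statement and the nondegeneracy computation in detail, citing \cite{Fefferman:1976} for the parallel argument and remarking that nothing in it uses that $\Omega$ is a domain rather than a coordinate chart, so it applies verbatim in the local charts covering a neighborhood of $M$ in $X$.
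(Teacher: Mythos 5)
Your overall strategy is the same as the paper's (which is Fefferman's original argument): expand $\psi$ order by order in a fixed defining function, linearize $J$ at each order, and show the linearization is invertible at orders $1$ through $m+1$. However, there is a genuine gap at exactly the point you flag with ``wait, one must track this carefully.'' The entire lemma rests on the explicit linearization
\[
J[\psi+\eta\psi^{s}]=J[\psi]+s(m+2-s)\,\eta\,\psi^{s-1}+\mathcal{O}(\psi^{s}),
\]
and your proposal never establishes it; your heuristic power count ($\rho^{m+1}\cdot\rho^{\ell}\cdot\rho^{-2}=\rho^{m+\ell-1}$) in fact gives the wrong order, and you abandon the computation. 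Without the numerical coefficient $s(m+2-s)$ you cannot see the one fact that makes the statement true as stated: this coefficient is nonzero for $1\leq s\leq m+1$ and vanishes at $s=m+2$, which is precisely why the jet of $\psi$ is forced up to order $m+1$ and why the construction (and uniqueness) stops there. Your proposal instead attributes the cutoff to the Lee--Melrose logarithmic terms, which is a consequence of this degeneration, not an independent reason one can cite inside the proof.

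Relatedly, your proposed source of nondegeneracy is misplaced. Strict pseudoconvexity enters only at the zeroth step, to guarantee $J[\rho]\neq 0$ on $\partial\Omega$ so that $\psi^{(1)}=\rho\,J[\rho]^{-1/(m+1)}$ makes sense and so that any two defining functions $\varphi=\eta\psi$ satisfy $J[\varphi]=\eta^{m+1}J[\psi]$ along the boundary (forcing agreement to first order). At every subsequent step $s\geq 2$ the ``nondegeneracy of the linearization'' is not a Levi-form condition at all but the nonvanishing of the scalar $s(m+2-s)$; invoking the Levi form here would suggest the induction continues to all orders, which is false. To repair the proof you need to carry out the computation of $J[\psi+\eta\psi^{s}]$ from (\ref{eq.J}) (or from Lemma \ref{lemma.J.alt}) and exhibit the coefficient $s(m+2-s)$ explicitly; once that is done, both existence of the iterate $v=\psi+[(1-J[\psi])/(s(m+2-s))]\psi^{s}$ and the uniqueness statement (if $\psi-\widetilde{\psi}=\eta\psi^{s}$ with $s\leq m+1$ and both are approximate solutions, then $\eta$ vanishes on $\partial\Omega$) follow immediately.
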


\begin{proof}
Suppose that $\rho$ is a smooth function on $\Omega$ defined in a neighborhood
of $\partial \Omega$ with $\rho=0$ on $\partial\Omega$ and $\rho^{\prime}(p)
\neq 0$ for all $p\in\partial\Omega$. We recall Fefferman's
iterative construction of an approximate solution $u$ to the
Monge-Amp\`{e}re equation, i.e., a function
$u\in\mathcal{C}^{\infty}$ with $u | \partial \Omega = 0$ and $J[u]=1+\mathcal{O}(u^{m+2})$.
To obtain a first approximation, note that for $\rho$ as above, and for any smooth
function $\eta$, we have
\begin{equation}
J\left[  \eta\rho\right ]  =\eta^{m+1}J [\rho], \label{eq.J11}%
\end{equation}
when $\rho=0$, so that the function
\[
\psi^{(1)}=\rho \cdot J[\rho]^{-1/(m+1)}%
\]
satisfies $J[ \psi^{(1)}] = 1$ on $\partial \Omega$,
and $J[ \psi^{(1)} ]=1+\mathcal{O}(\psi^{(1)})$. The fact that $J[\rho]$ is nonzero on
$\partial\Omega$ follows from pseudoconvexity that implies
that $\rho_{j \overline{k}}$ is positive definite
on $\mbox{ker} ~\partial \rho$ on $\partial \Omega$, and that
$\rho' \neq 0$ on $\partial \Omega$. Note that if $\varphi$ and $\psi$ are
two functions vanishing on $\partial\Omega$, it follows that $\varphi=\eta
\psi$ for some smooth function $\eta$. Thus, by (\ref{eq.J11}),
$J[\varphi] = \eta^{m+1} J[\psi]$. From this computation it follows that any approximate
solution $u$ is uniquely determined up to first order.

We now iterate this construction. Suppose that for an integer $s \geq 2$, we have
an approximate solution to the Monge-Amp\`ere equation to order $s-1$.
That is, we have a smooth function $\psi$
with $\psi=0$ on $\partial\Omega$, $\psi^{\prime} (p) \neq0$, for all $p\in
\partial\Omega$, and $J[\psi]=1+\mathcal{O}(\psi^{s-1})$.
We seek a function of the form $v=\psi+\eta\psi^{s}$,
where $\eta\in\mathcal{C}^{\infty}$ is chosen so that $J[v]=1+\mathcal{O}(\psi^{s})$.
The iteration is based on formula
\[
J[ \psi+\eta\psi^{s}] = J[ \psi ] + \left[  s(m+2-s)\right]  \eta\psi^{s-1}%
+\mathcal{O}(\psi^{s}),
\]
for smooth functions $\psi$ and $\eta$, again with the property that $\psi$ vanishes
on $\partial\Omega$. This formula is a straightforward computation using the
formula (\ref{eq.J}). From this formula it follows that the desired function
$v$ is given by%
\[
v=\psi+\left[  \frac{1-J(\psi)}{s(m+2-s)}\right]  \psi^{s} . %
\]
The iteration clearly works up to $s=m+1$ and produces an approximate solution
with the desired properties. It also follows that any function $\widetilde{u}$
with $u-\widetilde{u}=\mathcal{O}(\psi^{m+2})$ satisfies $J[ \widetilde
{u} ] = J[u] + \mathcal{O}(\psi^{m+2})$.\ Thus, in particular, any smooth function
having the same $(m+1)$-jet on $\partial\Omega$ as an approximate solution is
also an approximate solution.

On the other hand, it is clear that any two approximate solutions \emph{must}
have the same $(m+1)$-jet on $\partial\Omega$. If $\psi$ and $\widetilde{\psi
}$ satisfy $\psi-\widetilde{\psi}=\eta\psi^{s}$ then $J[ \psi ] - J [ \widetilde
{\psi}] = s\left(  m+2-s\right)  \eta\psi^{s-1}+\mathcal{O}(\psi^{s})$. In
particular, if $s<m+2$ and $J[\psi]-J[\widetilde{\psi}]=\mathcal{O}(\psi
^{m+2})$ then $\psi$ and $\widetilde{\psi}$ are approximate solutions
uniquely determined up to
order $m+2$.

\end{proof}

\subsubsection{Global Theory}

\label{sec.monge-ampere.global}

Now suppose $X$ is a compact complex manifold of dimension $m=n+1$ with
boundary $M=\partial X$. Note that $M$ has real dimension $2n+1$ and inherits
an integrable CR-structure from $X$. As always, we assume that $M$ with this
CR-structure is strictly pseudoconvex. We first say what it means for a single
smooth function $\varphi$ defined in a neighborhood of $M$ to be an
approximate solution of the complex Monge-Amp\`{e}re equation. We denote by
$\mathcal{C}^{\infty}(X)$ the smooth functions on $X$.


\begin{definition}
\label{def.gdas} We will say that a function $\varphi\in\mathcal{C}^{\infty
}(X)$ is a globally defined approximate solution of the complex
Monge-Amp\`{e}re equation near $M=\partial X$ if for any $p\in M$, there is a
neighborhood $V$ of $p$ in $X$ and holomorphic coordinates $z$ on $V$ so that
$\varphi$ is an approximate solution of the complex Monge-Amp\`{e}re equation
in the chosen coordinates.
\end{definition}

As we will see later, we will need such a globally defined approximate
solution in order to identify the residues of the scattering operator on $X$
with CR-covariant differential operators.

If $\varphi$ is a defining function for $M$ with $\varphi<0$ in the interior
of $X$, we associate to $\varphi$ a K\"{a}hler form%
\begin{equation}
\omega_{\varphi}=\frac{i}{2}\partial\overline{\partial}\log(-1/\varphi)
\label{eq.Kahler.form}%
\end{equation}
and a pseudo-Hermitian structure
\begin{equation}
\theta=\left.  \frac{i}{2}\left(  \overline{\partial}-\partial\right)
\varphi\right\vert _{M}. \label{eq.contact}%
\end{equation}
Observe that two defining functions $\varphi$ and $\rho$ generate the same
K\"{a}hler metric if and only if $\rho=e^{F}\varphi$ for a pluriharmonic
function $F$, i.e. $\partial\overline{\partial}F=0$. It is known that a
pluriharmonic function $F$ is uniquely determined by its boundary values (see,
for example, Bedford \cite{Bedford:1980}). If $\theta_{\rho}$ and
$\theta_{\varphi}$ are the corresponding pseudo-Hermitian structures on $M$
then $\theta_{\rho}=e^{f}\theta_{\varphi}$, where $f=\left.  F\right|  _{M}$.

We give a necessary and sufficient condition on $M$ for a globally defined
approximate solution of the Monge-Amp\`{e}re equation to exist.
Recall that the canonical bundle of $M$ is the bundle
generated by forms $f~\theta^{1}\wedge\cdots\wedge\theta^{n}\wedge\theta$
where $f$ is smooth, $\theta$ is a contact form, and $\left\{  \theta^{\alpha
}\right\}  _{\alpha=1}^{n}$ is an admissible co-frame. If $M$ is the boundary
of a strictly pseudoconvex domain in $\mathbb{C}^{m}$, the canonical bundle is
generated by restrictions of forms $f~dz^{1}\wedge\cdots\wedge dz^{m}$ to $M$.
The sections of the canonical bundle are $(n+1,0)$-forms $\zeta$ on $M$.

If $\theta$ is a contact form, $T$ is the characteristic vector field, and
$\zeta$ is \emph{any} nonvanishing section of the canonical bundle, it is not
difficult to see that
\begin{align*}
\theta\wedge(T~\lrcorner~\zeta) \wedge(T ~ \lrcorner~ \overline{\zeta}  &  =
\lambda ~\theta\wedge(d\theta)^{n}%
\end{align*}
for a smooth positive function $\lambda$. We say that the contact form
$\theta$ is \emph{volume-normalized} with respect to a nonvanishing section
$\zeta$ of the canonical bundle if
\[
\theta\wedge\left(  d\theta\right)  ^{n}=\left(  i\right)  ^{n^{2}}%
n!\theta\wedge\left(  T~\lrcorner~\zeta\right)  \wedge\left(  T~\lrcorner
~\overline{\zeta}\right)
\]
where $T$ is the characteristic vector field. The following criterion will be useful.

\begin{lemma}
\label{lemma.local}The contact form $\theta$ given by (\ref{eq.contact}) is
volume-normalized with respect to the form $\zeta=\left.  dz^{1}\wedge
\cdots\wedge dz^{m}\right\vert _{M}$ if and only if
\[
J [\varphi]=1+\mathcal{O}(\varphi)
\]
in the coordinates $(z_{1},\cdots,z_{m})$.
\end{lemma}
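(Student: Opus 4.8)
The plan is to compute both sides of the claimed identity in the coordinates $(z_1,\dots,z_m)$ near a point $p\in M$ and compare them to lowest order in $\varphi$. First I would express the Levi-form data of the contact form $\theta=\tfrac{i}{2}(\overline\partial-\partial)\varphi|_M$ in terms of the complex Hessian $\varphi_{j\overline k}$. Using Lemma~\ref{lemma.hr}, which decomposes $\partial\overline\partial\varphi = h_{\alpha\overline\beta}\,\theta^\alpha\wedge\theta^{\overline\beta} + r\,\partial\varphi\wedge\overline\partial\varphi$, together with the frame $\{W_\alpha,\xi\}$ and dual coframe $\{\theta^\alpha,\partial\varphi\}$ from Lemma~\ref{lemma.xi}, one sees that on $M$ the form $d\theta = i\partial\overline\partial\varphi$ restricts to $ih_{\alpha\overline\beta}\theta^\alpha\wedge\theta^{\overline\beta}$, so $(d\theta)^n\wedge\theta$ is (up to the standard combinatorial constant $i^{n^2}n!\,(-1)^{?}$, which I would pin down carefully) equal to $\det(h_{\alpha\overline\beta})$ times the natural real volume form built from $\theta^1,\dots,\theta^n,\theta$ and their conjugates.

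Next I would compute the right-hand side $\theta\wedge(T\lrcorner\zeta)\wedge(T\lrcorner\overline\zeta)$ for $\zeta=dz^1\wedge\cdots\wedge dz^m|_M$. The key linear-algebra fact is that contracting $dz^1\wedge\cdots\wedge dz^m$ with the characteristic vector field $T$ (equivalently, with $\xi=\tfrac12(N-iT)$ modulo the $N$-direction, using that $\partial\varphi(\xi)=1$) produces, after wedging with $\theta$, a form whose coefficient relative to $\theta^1\wedge\cdots\wedge\theta^n\wedge\theta$ involves the Jacobian of the change of coframe from $\{dz^j\}$ to $\{\theta^\alpha,\partial\varphi\}$. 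Carrying this out, the ratio of the two sides of the volume-normalization equation is precisely $\det(h_{\alpha\overline\beta})$ divided by a quantity built from $\det(\varphi_{j\overline k})$ and $\varphi$ itself — and by Lemma~\ref{lemma.hr} (or directly expanding the bordered determinant as in Lemma~\ref{lemma.J.alt}, equation~(\ref{eq.J1})), one recognizes this combination as exactly $J[\varphi]$ evaluated on $M$. Concretely, the bordered determinant identity $\det\!\begin{pmatrix}\varphi&\varphi_{\overline k}\\\varphi_j&\varphi_{j\overline k}\end{pmatrix}=\varphi\det(\varphi_{j\overline k}-\varphi_j\varphi_{\overline k}/\varphi)$ together with the fact that $\det(\varphi_{j\overline k}-\varphi_j\varphi_{\overline k}/\varphi)$ restricted to $M$ equals (a constant times) $\det(h_{\alpha\overline\beta})\cdot r$-type terms shows that $\theta$ is volume-normalized with respect to $\zeta$ iff $J[\varphi]=1$ on $M$, i.e. $J[\varphi]=1+\mathcal{O}(\varphi)$.

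I expect the main obstacle to be bookkeeping of constants and orientations: getting the powers of $i$, the factorials, the sign $(-1)^m$ in the definition~(\ref{eq.J}) of $J$, and the factor relating $\partial\varphi$ to $\overline\partial\varphi$ under the contraction all to line up so that the normalization constant $i^{n^2}n!$ in the definition of ``volume-normalized'' is reproduced exactly, with no spurious factor. A secondary point requiring care is that the identity is only an ``$\mathcal{O}(\varphi)$'' statement, so I only need the leading behavior as $\varphi\to0$; this means I may work modulo terms vanishing on $M$ throughout, which simplifies the contraction computations considerably since $r\varphi$ and similar terms drop out on the boundary. Once the constants are settled, the equivalence is immediate from the displayed determinant factorization.
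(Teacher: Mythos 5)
Your plan is sound, and it is worth noting that the paper does not actually prove this lemma at all: it simply cites Farris, Proposition 5.2. What you have sketched is essentially a reconstruction of that cited computation, and the ingredients you identify are the right ones: on $M$ one has $d\theta=i h_{\alpha\overline\beta}\theta^\alpha\wedge\theta^{\overline\beta}$ (the $r\,\partial\varphi\wedge\overline\partial\varphi$ term of Lemma \ref{lemma.hr} dies on $TM$ since $\partial\varphi|_{TM}=i\theta$), so $\theta\wedge(d\theta)^n$ is $\det(h_{\alpha\overline\beta})$ times the standard frame volume; and since $\zeta$ is an $(m,0)$-form and $T=i(\xi-\overline\xi)$ with $\overline\xi\lrcorner\zeta=0$, the contraction $T\lrcorner\zeta$ reduces to $i\,\xi\lrcorner\zeta$, which by $\partial\varphi(\xi)=1$, $\theta^\alpha(\xi)=0$ produces $|c|^2$ on the right-hand side, where $dz^1\wedge\cdots\wedge dz^m=c\,\theta^1\wedge\cdots\wedge\theta^n\wedge\partial\varphi$. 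The normalization condition then reads $\det(h_{\alpha\overline\beta})=|c|^2$, and the remaining task is exactly the one you name: showing $J[\varphi]|_M=\pm\det(h_{\alpha\overline\beta})/|c|^2$.

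The one genuine wrinkle is your appeal to (\ref{eq.J1}): the factorization $\det(\cdot)=\varphi\det\bigl(\varphi_{j\overline k}-\varphi_j\varphi_{\overline k}/\varphi\bigr)$ is a $0\times\infty$ expression on $M$ and cannot be "restricted to $M$" factor by factor. Use instead the Schur-complement form $\det(\cdot)=\det(\varphi_{j\overline k})\bigl(\varphi-\varphi_{\overline k}\varphi^{\overline k j}\varphi_j\bigr)$, which is regular at $\varphi=0$. The defining properties (\ref{eq.xi.1})--(\ref{eq.xi.2}) of $\xi$ give $\varphi_{\overline k}\varphi^{\overline k j}=r^{-1}\xi^j$, hence $\varphi_{\overline k}\varphi^{\overline kj}\varphi_j=r^{-1}$; combined with the change-of-coframe identity $|c|^2\det(\varphi_{j\overline k})=r\det(h_{\alpha\overline\beta})$ from Lemma \ref{lemma.hr}, the transverse curvature $r$ cancels entirely (there are no residual "$r$-type terms") and one lands on $J[\varphi]|_M=\pm\det(h_{\alpha\overline\beta})/|c|^2$ as needed. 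Your caution about signs is warranted but not really your problem to solve: the paper itself states two mutually inconsistent sign conventions for $J$ (compare the introduction with (\ref{eq.J})), so any careful write-up has to fix one convention first.
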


For the proof see Farris \cite{Farris:1986}, Proposition 5.2. Using the lemma,
we can prove:

\begin{proposition}
\label{prop.ma.global}Suppose that $X$ is a compact complex manifold with
boundary $M=\partial X$. There is a globally defined approximate solution
$\varphi$ of the Monge-Amp\`{e}re equation in a neighborhood of $M$ if and
only if $M$ admits a pseudo-Hermitian structure $\theta$ with the following property:
In a neighborhood of any point $p \in M$, there is a local, closed $(n+1,0)$ form
$\zeta$ such that $\theta$ is volume-normalized with respect to $\zeta$.
\end{proposition}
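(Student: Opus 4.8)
The plan is to prove both implications; the forward one is short and the reverse one carries the content.

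\textbf{Forward implication.} Suppose $\varphi\in\mathcal{C}^{\infty}(X)$ is a globally defined approximate solution, and set $\theta=\left.\frac{i}{2}(\overline{\partial}-\partial)\varphi\right\vert_{M}$, which is a well-defined contact form on $M$ since $\varphi$ is a strictly pseudoconvex defining function. Given $p\in M$, Definition \ref{def.gdas} supplies holomorphic coordinates $z$ near $p$ in which $J[\varphi]=1+\mathcal{O}(\varphi^{m+1})$, hence a fortiori $J[\varphi]=1+\mathcal{O}(\varphi)$; Lemma \ref{lemma.local} then gives that $\theta$ is volume-normalized with respect to $\zeta:=\left.dz^{1}\wedge\cdots\wedge dz^{m}\right\vert_{M}$, which is closed because it is the pullback to $M$ of a holomorphic $m$-form. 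This is precisely the asserted property of $\theta$.

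\textbf{Reverse implication: local solutions.} Now suppose $M$ carries such a $\theta$. First I would manufacture good local coordinates: fixing $p\in M$ and a closed $(n+1,0)$-form $\zeta$ near $p$ with $\theta$ volume-normalized with respect to $\zeta$, write $\zeta=\phi\,(dw^{1}\wedge\cdots\wedge dw^{m})|_{M}$ in local holomorphic coordinates $w$ on $X$, with $\phi$ smooth and nonvanishing. Since $d\zeta=d\phi\wedge\zeta$ (as $dw^{1}\wedge\cdots\wedge dw^{m}$ is closed) and $d\zeta=0$, the coefficient satisfies $W_{\overline{\alpha}}\phi=0$ for all $\alpha$, i.e.\ it is a CR function on the strictly pseudoconvex boundary $M$, and therefore extends holomorphically to a collar neighborhood of $p$ in $X$ (Lewy extension). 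Thus $\zeta$ is the restriction of a nonvanishing holomorphic $m$-form, which becomes $dz^{1}\wedge\cdots\wedge dz^{m}$ after integrating one coordinate; choosing such holomorphic coordinates $z^{(p)}$ near $p$, we have $\zeta=\left.dz^{1}\wedge\cdots\wedge dz^{m}\right\vert_{M}$ in these coordinates. Next, picking any defining function $\rho$ near $p$ with $\left.\frac{i}{2}(\overline{\partial}-\partial)\rho\right\vert_{M}=\theta$ (obtained by rescaling an arbitrary defining function by $e^{u}$ with $u|_{M}$ prescribed), Lemma \ref{lemma.local} and the hypothesis give $J[\rho]=1+\mathcal{O}(\rho)$ in the coordinates $z^{(p)}$; Fefferman's iteration (as recalled in the proof of the uniqueness lemma above) then upgrades $\rho$ to an approximate solution $\varphi_{p}=\rho+\mathcal{O}(\rho^{2})$ in these coordinates, and since the correction is $\mathcal{O}(\rho^{2})$, $\varphi_{p}$ still induces the contact form $\theta$.

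\textbf{Reverse implication: gluing.} On an overlap $V_{p}\cap V_{q}$, the canonical sections $\zeta_{p},\zeta_{q}$ obtained above are both volume-normalizing for the \emph{same} $\theta$, hence $\zeta_{q}=g\,\zeta_{p}$ with $|g|=1$; and $0=d\zeta_{q}=dg\wedge\zeta_{p}$ forces $g$ to be CR, so $g$ is a CR function of constant modulus on a strictly pseudoconvex CR manifold, hence locally constant (by nondegeneracy of $d\theta$). Since $\zeta_{p},\zeta_{q}$ are the restrictions to $M$ of the holomorphic volume forms of the two charts, whose ratio is the transition Jacobian $\det f'$ (with $f$ the holomorphic transition map), and $\det f'$ is holomorphic and equals the constant $g$ along $M$, it is constant, so $|\det f'|\equiv 1$. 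By Lemma \ref{lemma.f} no density factor is then required, so $\varphi_{q}$ is itself an approximate solution in the $z^{(p)}$-coordinates; as $\varphi_{p}$ and $\varphi_{q}$ are approximate solutions in the same coordinates inducing the same contact form, the uniqueness lemma forces $\varphi_{p}-\varphi_{q}$ to vanish to order $m+2$ along $M$. Patching with a partition of unity $\{\chi_{p}\}$ and setting $\varphi=\sum_{p}\chi_{p}\varphi_{p}$, one checks $\varphi=\varphi_{q}+\mathcal{O}(\varphi_{q}^{m+2})$ near $M$ in each chart, so $\varphi$ has the $(m+1)$-jet of an approximate solution along $M$ and hence is an approximate solution in every $z^{(q)}$; extending $\varphi$ to a defining function on all of $X$ completes the construction.

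\textbf{Main obstacle.} The heart of the argument is the gluing step: one must control the discrepancy between the local Fefferman solutions, which reduces to the rigidity statement that two volume-normalizing canonical sections differ only by a unimodular constant. This is exactly where the \emph{closedness} of the $(n+1,0)$-forms is used — closedness makes the comparison factor CR, the volume-normalization makes it of modulus one, and strict pseudoconvexity then makes it constant. The remaining ingredients — the holomorphic extension in the first step and the stability of jet-level agreement under the partition-of-unity patch — are routine.
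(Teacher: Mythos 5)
Your proof is correct and follows the same overall architecture as the paper's: holomorphic extension of the CR coefficient of $\zeta$ and a coordinate change with prescribed Jacobian (the paper's equation (\ref{eq.Jac.F}), solved by integrating in $z_{1}$), Fefferman's local iteration, unimodularity of the transition maps, uniqueness of local approximate solutions, and a partition-of-unity patch. The one step where you genuinely diverge is the unimodularity of the transitions. The paper gets it by comparing the two local approximate solutions themselves: Lemma \ref{lemma.f} plus the uniqueness lemma give $u_{2}=e^{F}u_{1}\circ g$ up to order $m+1$ with $F=(2/(m+1))\log\left\vert g^{\prime}\right\vert$ pluriharmonic, equality of the induced contact forms forces $\left.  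F\right\vert _{M}=0$, and uniqueness of pluriharmonic extensions (Bedford) then gives $F\equiv0$. You instead work directly with the canonical sections: $\zeta_{q}=g\,\zeta_{p}$ with $\left\vert g\right\vert =1$ from volume-normalization, $g$ CR from closedness, hence $g$ locally constant by nondegeneracy of $d\theta$, and $\det f^{\prime}$ is then a unimodular constant since it is holomorphic and agrees with $g$ on the real hypersurface $M$. Your route is self-contained at the CR level and avoids the appeal to uniqueness of pluriharmonic extensions, at the cost of the (correct, but worth spelling out) facts that a unimodular CR function is constant on a nondegenerate CR manifold and that a holomorphic function is determined by its values on a real hypersurface; the paper's route trades these for the cited pluriharmonic-extension uniqueness. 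Both are valid, and the remainder of your argument (jet-level agreement surviving the partition of unity) matches the paper's.
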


\begin{proof}
(i) First, suppose that $X$ admits a globally defined approximate solution
$\varphi$ of the Monge-Amp\`{e}re equation. Let $\theta$ be the associated
contact form on $X$, i.e., $\theta$ is given by (\ref{eq.contact}). Pick $p\in
M$ and let $z \equiv \left(  z_{1},\cdots,z_{m}\right)  $ be holomorphic coordinates
near $p$ so chosen that $\varphi$ is an approximate solution of the
Monge-Amp\`{e}re equation near $p$ in these coordinates. Let
\[
\zeta=\left.  dz^{1}\wedge\cdots\wedge dz^{m}\right\vert _{M}.
\]
Then $\theta$ is volume-normalized with respect to $\zeta$ by Lemma
\ref{lemma.local}.

(ii) Suppose that $\theta$ is a given contact form on $M$ with the property
that, for each point $p\in M$, there is a neighborhood of $p$ and a closed,
locally defined section $\zeta$ of the canonical bundle with respect to which
$\theta$ is volume-normalized. Write
\[
\zeta=f~\left.  dz^{1}\wedge\cdots\wedge dz^{m}\right\vert _{M}%
\]
for holomorphic coordinates $\left\{  z_{1},\cdots,z_{m}\right\}  $ defined in
a neighborhood of $p$ and a smooth function $f$. The condition $d\zeta=0$ is
equivalent to the condition%
\[
\overline{\partial_{b}}f=0
\]
i.e., $\ f$ is a CR-holomorphic function. By the strict pseudoconvexity of
$M$, there is a holomorphic extension $F$ to a neighborhood $V$ of $p$ in $X$,
i.e., there is an $F$ defined near $p$ with $\overline{\partial}F=0$ and
$\left.  F\right\vert _{M\cap V}=f$ (see \cite{KR:1965}). We claim that we can
find new holomorphic coordinates $w \equiv (  w_{1},\cdots,w_{m} )$ near
$p$ with the property that
\begin{equation}
\frac{\partial\left(  w_{1},\cdots,w_{m}\right)  }{\partial\left(
z_{1},\cdots,z_{m}\right)  }=F (z) \label{eq.Jac.F}%
\end{equation}
If so then%
\[
\zeta=\left.  dw^{1}\wedge\cdots\wedge dw^{m}\right\vert _{M}%
\]
Constructing in $V$ an approximate solution $\psi_{V}$ of the
Monge-Amp\`{e}re equation in the $w$-coordinates (as in Lemma 2.7, following Fefferman
\cite{Fefferman:1976}), we conclude from Lemma \ref{lemma.local}
that the induced contact form
\[
\theta_{V}=\left.  \frac{i}{2}\left(  \overline{\partial}-\partial\right)
\psi_{V}\right\vert _{M\cap V}%
\]
on $M\cap V$ is volume-normalized with respect to $\zeta$, and thus coincides
with $\theta$.

We now claim that the local approximate solutions $\psi_{V}$ can be
glued together to form a globally defined approximate solution to
the Monge-Amp\`{e}re equation in the sense of Definition
\ref{def.gdas}. We first note an important property of the
transition map for two local coordinates. Let $V_{1}$ and $V_{2}$ be
neighborhoods of $M$ in $X$ with nonempty intersection, let $z$ and
$w$ be holomorphic coordinates on $V_{1}$ and $V_{2}$, and suppose
that $\psi_{1}$ and $\psi_{2}$ are approximate solutions of the
complex Monge-Amp\`{e}re equation in these respective coordinates.
More precisely, $u_{1}=\psi_{1}\circ z$ and $u_{2}=\psi_{2}\circ w$
are approximate solutions to the Monge-Amp\`{e}re equation on
coordinate patches $U_{1}$ and $U_{2}$ in $\mathbb{C}^{m}$, and
there is a biholomorphic map $g:U_{2}\cap w^{-1}(V_{1}\cap
V_{2})\rightarrow U_{1}\cap z^{-1}(V_{1}\cap
V_{2})$. The function $u_{2}=\left\vert g^{\prime}\right\vert ^{2/(m+1)}%
u_{1}\circ g$ is also an approximate solution of the complex
Monge-Ampere equation in $U_{2}\cap w^{-1}(V_{1}\cap V_{2})$ by
Lemma \ref{lemma.f}, so by uniqueness we have $u_{2}=e^{F}u_{1}\circ
g$, up to order $m+1$, where $F=(2/(m+1))\log\left\vert
g^{\prime}\right\vert $ is pluriharmonic. Moreover, since
$u_{1}$ and $u_{2}$ both induce the contact form
$\theta$ it follows that
\[
\left.  \left(  \overline{\partial}-\partial\right)  u_{2}\right\vert
_{U_{2}\cap w^{-1}(M\cap V_{1}\cap V_{2})}=\left.  \left[  \left(
\overline{\partial}-\partial\right)  u_{1}\right]  \circ f\right\vert
_{U_{2}\cap w^{-1}(M\cap V_{1}\cap V_{2})}%
\]
from which we deduce that $\left.  F\right\vert _{U_{2}\cap
w^{-1}(M\cap V_{1}\cap V_{2})}=0$, and hence $F=0$ by the uniqueness
of pluriharmonic extensions. In particular, the map $g$ is
unimodular, $|g'| = 1$. Thus $u_{2}=u_{1}\circ g$ on $U_{2}\cap
w^{-1}(V_{1}\cap V_{2})$ up to order $m+1$.

We now fix a boundary defining function $\rho$. Suppose that
$\left\{ U_{i}\right\} $ is a finite cover of a neighborhood of the
boundary by holomorphic charts. Denote by $F_{i}$ the map from
$\mathbb{C}^{m}$ into $U_{i}$ and set $F_{ij}=F_{i}^{-1}\circ
F_{j}$. As proved above, the cover and holomorphic coordinates
$\left( U_{i},F_{i}\right) $ may be chosen so that the transition
maps are unimodular, i.e., $\left\vert F_{ij}^{\prime}\right\vert
=1$.
Using Fefferman's construction, we can produce in each $U_{i}$ an
approximate solution $u_{i}$ in the sense that
\[
J\left[  u_{i}\right]  =1+\mathcal{O}\mathbf{(}\rho^{m+1})
\]
Now suppose that $\left\{  \chi_{i}\right\}  $ is a
$\mathcal{C}^{\infty}$ partition of unity subordinate to the cover
$\left\{  U_{i}\right\}  $. We claim that the smooth function
$u=\sum_{i}\chi_{i}u_{i}$ is an approximate solution of the
Monge-Amp\`{e}re equation in the sense of Definition \ref{def.gdas}.
Choose $U_{i}$ so that $p\in U$. We may write
$u=\sum_{j}(\chi_{j}\circ F_{i})(z)~\left(  u_{j}\circ
F_{i}\right)  $. Since $u_{j}\circ F_{i}=(u_{j}\circ F_{j})\circ
F_{ji}$ we see that $( u_j \circ F_i )$ is also
an approximate solution to the Monge-Amp\`{e}re equation in the $F_{i}%
$-coordinates. Thus, there is a smooth function $\eta_{ji}$ so that %
\[
\left(  u_{j}\circ F_{i}\right)  (z)-\left(  u_{i}\circ F_{i}\right)
(z)=\eta_{ji}(z)\left(  \rho\circ F_{i}\right)^{m+2}(z)%
\]
where $\eta_{ji}$ is smooth. We conclude that%
\[
u(z) - u_i (z) =
\mathcal{O} ( ( \rho \circ F_i)^{m+2}).
\]
This shows that $u$ is also an approximate solution of the
Monge-Amp\`{e}re equation in the $F_{i}$-coordinates as claimed.



To finish the proof it suffices to establish that such a holomorphic
coordinate change $z\mapsto w$, as in (\ref{eq.Jac.F}), exists.
We consider a coordinate transformation given by
\begin{equation}\label{coord-2}
w(z) = ( h(z), z_2, \ldots , z_m),
\end{equation}
where $h(z)$ is the unknown holomorphic function. Condition (\ref{eq.Jac.F})
is equivalent to
\begin{equation}\label{eq.cond}
\frac{\partial h}{\partial z_1} (z_1, \ldots, z_m )
 =  F(z_1, z_2, \ldots, z_m ) .
\end{equation}
Here, $F$ is the holomorphic  extension of the CR-function $f$.
We solve this equation for $h$ as follows.
%
%
%
%
%
We set the convention that a boundary chart in $\mathbb{C}^{m}$ is
the intersection of an open ball about $0$ with the (real)\
half-space $\operatorname{Im}z_{m}\geq0$.
We assume that the boundary point $p$ corresponds to $0 \in \partial
\mathbb{C}^m$.
%
%
The unknown function
$h$ is a complex-valued function defined in a neighborhood $V$
of
$0\in\mathbb{C}^{m}$, is holomorphic in $V\cap\left\{  \operatorname{Im}%
z_{m}>0\right\}  $, has CR boundary values, and satisfies $h(0)=0$.
Thus, the map $w(z)$, defined in (\ref{coord-2}), preserves
the boundary $\operatorname{Im}(z_{n})=0$.

Consequently, the desired change of coordinates is obtained by solving the initial value problem%
\begin{align}
\frac{ \partial h }{\partial z_{1} }(z_{1},\cdots,z_{m}) &  = F (z_1,z_{2},\cdots
,z_{m})\label{eq.code}\\
h(0,z_{2},\cdots,z_{m}) &  =0 ,\nonumber
\end{align}
by simple integration.
\end{proof}



We can also express the basic criterion in Proposition \ref{prop.ma.global} in geometric terms.
Recall that the
contact form $\theta$ defines a pseudo-Hermitian, pseudo-Einstein structure on
$M$ if the Webster Ricci tensor is a multiple of the Levi form. Lee
\cite{Lee:1988} proved:

\begin{theorem}
\label{thm.pseudo.lee}Suppose that $M$ is a CR-manifold of dimension $\geq5$.
A contact form $\theta$ on $M$ is pseudo-Einstein if and only if for each
$p\in M$ there is a neighborhood of $p$ in $M$ and a locally defined closed
section $\zeta$ of the canonical bundle with respect to which $\theta$ is volume-normalized.
\end{theorem}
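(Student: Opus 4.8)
The plan is to reduce Lee's theorem, as quoted here, to the local pseudo-Hermitian computation relating the Webster Ricci curvature of $\theta$ to the existence of a closed $(n+1,0)$-form with respect to which $\theta$ is volume-normalized. The key object is the \emph{pseudo-Einstein equation} in the form Lee derives it: if $\theta$ is a contact form with Webster Ricci tensor $R_{\alpha\overline{\beta}}$ and scalar curvature $R$, then in dimension $2n+1\ge 5$ the tensor $R_{\alpha\overline{\beta}}-\frac{R}{n}h_{\alpha\overline{\beta}}$ vanishes if and only if a certain $1$-form built from $R$ and the torsion (the ``pseudo-Einstein $1$-form'') is closed, which in turn is exactly the integrability condition needed to solve locally for a CR-holomorphic function $f$ with prescribed $|f|$ so that $\zeta=f\,\theta^{1}\wedge\cdots\wedge\theta^{n}\wedge\theta$ is closed and $\theta$ is volume-normalized with respect to $\zeta$.

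The steps, in order, are as follows. First I would fix a local admissible coframe $\{\theta,\theta^{\alpha}\}$ and compute $d(\theta^{1}\wedge\cdots\wedge\theta^{n}\wedge\theta)$ in terms of the connection forms $\omega_{\alpha}{}^{\beta}$ and the torsion, using the structure equations in Section~\ref{sec.prelim.CR}; the trace $\omega_{\alpha}{}^{\alpha}$ of the connection form is the relevant quantity, and the compatibility condition $dh_{\alpha\overline{\beta}}=\omega_{\alpha\overline{\beta}}+\omega_{\overline{\beta}\alpha}$ lets me express its real part through $d\log\det(h_{\alpha\overline{\beta}})$. Second, for a candidate section $\zeta=f\,\theta^{1}\wedge\cdots\wedge\theta^{n}\wedge\theta$, the equation $d\zeta=0$ becomes a first-order equation on $f$: the $\overline{\partial}_{b}$-part forces $f$ to be CR-holomorphic, while the remaining components fix $d\log|f|^{2}$ in terms of the trace of the connection form and the torsion. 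The volume-normalization condition then pins down $|f|$. Third, the compatibility of these two requirements — solving for $f$ locally — holds if and only if the prescribed $1$-form $\sigma$ for $d\log|f|^{2}$ together with the CR-holomorphy condition is integrable; computing $d\sigma$ and matching it against the curvature identities (the Bianchi-type identities for the Tanaka-Webster connection, where dimension $\ge 5$ is used so that the relevant contraction of the curvature is controlled by $R_{\alpha\overline{\beta}}-\frac{R}{n}h_{\alpha\overline{\beta}}$) shows $d\sigma=0$ precisely when $\theta$ is pseudo-Einstein. Fourth, once integrability is established, I would invoke the local solvability of the tangential Cauchy-Riemann equations on a strictly pseudoconvex CR manifold of dimension $\ge 5$ (the range where $\overline{\partial}_{b}$ has good local solvability, by Kohn-Rossi type results, cf.\ \cite{KR:1965}) to produce the CR-holomorphic $f$ with the prescribed modulus, hence the closed $\zeta$. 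For the converse, existence of such a closed $\zeta$ near each point gives, by reversing the computation, the vanishing of the pseudo-Einstein $1$-form's differential, and a further short argument (again using $n\ge 2$) extracts $R_{\alpha\overline{\beta}}=\frac{R}{n}h_{\alpha\overline{\beta}}$.

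The main obstacle I expect is the third step: correctly identifying which contraction of the Tanaka-Webster curvature tensor appears in $d\sigma$ and showing it equals a nonzero multiple of the traceless Webster Ricci tensor exactly when $n\ge 2$. This is where the dimension hypothesis enters essentially — for $n=1$ (dimension $3$) the trace-free part of $R_{\alpha\overline{\beta}}$ carries no information and the pseudo-Einstein condition degenerates, so the equivalence fails and a different condition is needed, consistent with the remarks in Section~\ref{sec.monge-ampere.global}. Since this theorem is quoted from Lee \cite{Lee:1988} rather than proved here, in the paper itself I would simply cite \cite{Lee:1988} for the full computation and use the statement as given; the sketch above indicates the structure of Lee's argument and why the dimension restriction is sharp.
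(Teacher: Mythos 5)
The paper gives no proof of this statement: it is quoted verbatim from Lee \cite{Lee:1988}, and your proposal correctly ends by deferring to that citation. Your sketch of Lee's argument --- closedness of a local section of the canonical bundle as an integrability condition whose obstruction is controlled by the trace-free Webster Ricci tensor, with the hypothesis $\dim M\ge 5$ (i.e.\ $n\ge 2$) entering precisely there --- is consistent with the structure of the proof in \cite{Lee:1988}, so there is nothing in this paper itself to compare it against.
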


As an immediate consequence of Theorem \ref{thm.pseudo.lee}, we have:

\begin{theorem}
\label{thm.pseudo}Suppose that $M$ is a CR-manifold of dimension $\geq5$.
There is a globally defined approximate solution $\varphi$ of the complex
Monge-Amp\`{e}re equation in a neighborhood of $M$ if and only if $M$ carries
a contact form $\theta$ for which the corresponding pseudo-Hermitian structure
is pseudo-Einstein. In this case, the contact form $\theta$ is induced by the globally defined
approximate solution to the Monge-Amp\`ere equation $\varphi$.
\end{theorem}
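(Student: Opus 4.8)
The plan is to deduce Theorem \ref{thm.pseudo} directly by combining Proposition \ref{prop.ma.global} with Lee's Theorem \ref{thm.pseudo.lee}. Both results concern the same geometric object, namely the existence on $M$ of a pseudo-Hermitian structure $\theta$ that is volume-normalized, near every point, with respect to some locally defined closed section of the canonical bundle. Proposition \ref{prop.ma.global} says this condition is equivalent to the existence of a globally defined approximate solution $\varphi$ of the complex Monge-Amp\`ere equation near $M$; Theorem \ref{thm.pseudo.lee} says that, when $\dim M \geq 5$, this same condition is equivalent to $\theta$ being pseudo-Einstein. Chaining the two equivalences gives the asserted equivalence between the existence of a globally defined approximate Monge-Amp\`ere solution and the existence of a pseudo-Einstein contact form.

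Concretely, I would argue as follows. First, suppose $\varphi$ is a globally defined approximate solution of the Monge-Amp\`ere equation in a neighborhood of $M$. Let $\theta$ be the contact form induced by $\varphi$ via \eqref{eq.contact}. By part (i) of the proof of Proposition \ref{prop.ma.global}, $\theta$ is volume-normalized near each $p \in M$ with respect to a locally defined closed section of the canonical bundle (take $\zeta = dz^1 \wedge \cdots \wedge dz^m|_M$ in the coordinates adapted to $\varphi$ near $p$, and apply Lemma \ref{lemma.local}). Since $\dim M \geq 5$, Theorem \ref{thm.pseudo.lee} then shows $\theta$ is pseudo-Einstein. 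Conversely, suppose $M$ carries a pseudo-Einstein contact form $\theta$. By Theorem \ref{thm.pseudo.lee} (in the other direction), $\theta$ is volume-normalized near every point with respect to a locally defined closed section of the canonical bundle, so the hypothesis of Proposition \ref{prop.ma.global} is met, and that proposition produces a globally defined approximate solution $\varphi$ of the Monge-Amp\`ere equation near $M$. For the final sentence of the theorem, one observes that the construction in the proof of Proposition \ref{prop.ma.global} builds $\varphi$ precisely so that its induced contact form is volume-normalized with respect to the given $\zeta$, and since a contact form is determined by the section it is volume-normalized against (up to the pseudoconformal scaling already accounted for), the $\theta$ induced by $\varphi$ agrees with the pseudo-Einstein $\theta$ one started from.

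There is essentially no new obstacle here: the theorem is a formal corollary, and the one point requiring a word of care is the matching of normalizations in the last clause — that the $\varphi$ produced by the gluing argument in Proposition \ref{prop.ma.global} induces the \emph{same} $\theta$, not merely a conformally related one. This follows because volume-normalization with respect to a \emph{fixed} section $\zeta$ pins down $\theta$ uniquely (the defining relation $\theta\wedge(d\theta)^n = (i)^{n^2} n!\,\theta\wedge(T\lrcorner\zeta)\wedge(T\lrcorner\overline{\zeta})$ has a unique solution in the conformal class once $\zeta$ is chosen), and the local solutions $\psi_V$ in Proposition \ref{prop.ma.global} are constructed from exactly that $\zeta$; the gluing step there only adjusts $\varphi$ to order $m+2$, which does not affect the induced contact form $\theta = \frac{i}{2}(\overline\partial - \partial)\varphi|_M$. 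Hence the stated refinement holds, and the proof is complete.
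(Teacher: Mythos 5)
Your proposal is correct and follows the same route as the paper: Theorem \ref{thm.pseudo} is stated there as an immediate consequence of chaining Proposition \ref{prop.ma.global} with Lee's Theorem \ref{thm.pseudo.lee}, exactly as you do. Your extra remark on the last clause — that volume-normalization against a fixed closed $\zeta$ determines $\theta$ uniquely in its conformal class, and that the gluing in Proposition \ref{prop.ma.global} only perturbs $\varphi$ at order $m+2$ and hence leaves $\theta=\frac{i}{2}(\overline\partial-\partial)\varphi|_M$ unchanged — is a correct and worthwhile elaboration of a point the paper leaves implicit.
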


\begin{remark}
If $\varphi$ is a global approximate solution to the Monge-Amp\`{e}re
equation, then so is $e^{F}\varphi$ where $F$ is any pluriharmonic function.
The effect of the factor $F$ is simply to change the choice of local
coordinates needed to obtain a local approximate solution of the
Monge-Amp\`{e}re equation in any chart, as the argument in the proof of
Proposition \ref{prop.ma.global} easily shows. As observed above, the
K\"{a}hler form $\omega_{\varphi}$ is invariant under the change
$\varphi\mapsto e^{F}\varphi$.
\end{remark}

\section{Poisson Operator and Scattering Operator}

In this section we study the Dirichlet problem (\ref{eq.dp}) following a
standard technique in geometric scattering theory (see, for example, Melrose
\cite{Melrose:1995}; we follow closely the analysis of the Poisson operator
and scattering operator on conformally compact manifolds by Graham and Zworski
in \cite{GZ:2003}). Note that Epstein, Melrose, and Mendoza \cite{EMM:1991}
had previously studied the Poisson operator for a class of manifolds that
includes compact complex manifolds with strictly pseudoconvex boundaries. More
recently, Guillarmou and S\'a Barreto \cite{GSb:2006} studied scattering
theory and radiation fields for asymptotically complex hyperbolic manifolds, a
class which also includes that studied here.

We will set $x=-\varphi$ and we will denote by $\mathcal{C}^{\infty}(X)$ the
set of smooth functions on $X~$having Taylor series to all orders at $x=0$,
and by $\mathcal{\dot{C}}^{\infty}(X)$ the space of functions vanishing to all
orders at $x=0$. The space $\mathcal{C}^{\infty}(\mathring{X})$ consists of
smooth functions on $\mathring{X}$ with no restriction on boundary behavior.
We will denote by $x^{s}\mathcal{C}^{\infty}(X)$ the set of functions in
$\mathcal{C}^{\infty}(\mathring{X})$ having the form $x^{s}F$ for
$F\in\mathcal{C}^{\infty}(X)$.

Since
\[
N=-2\frac{\partial}{\partial x}%
\]
it follows that
\begin{equation}
\nu=-\frac{x}{\sqrt{1+rx}}\frac{\partial}{\partial x} \label{eq.nu.2}%
\end{equation}
is the outward normal to the hypersurface $x=\varepsilon$. Green's theorem
implies that%
\begin{equation}
\int_{x>\varepsilon}\left(  u_{1}\Delta_{\varphi}u_{2}-u_{2}\Delta_{\varphi
}u_{1}\right)  ~\omega^{m}=\int_{x=\varepsilon}\left(  u_{1}\nu u_{2}-u_{2}\nu
u_{1}\right)  ~\nu~\lrcorner~\omega^{m} \label{eq.Green}%
\end{equation}
We first note the `boundary pairing formula' (recall the definition
(\ref{eq.psi})).

\begin{proposition}
\label{prop.bp}Suppose $\operatorname{Re}(s)=m/2$, that $u_{1}$ and $u_{2}$
belong to $\mathcal{C}^{\infty}(\mathring{X})$ and there are functions
$F_{i},G_{i}\in\mathcal{C}^{\infty}(X)$ so that $u_{i}=x^{m-s}F_{i}+x^{s}%
G_{i}$, $i=1,2$. Finally, suppose that $\left(  \Delta_{\varphi}%
-s(m-s)\right)  u_{i}=r_{i}\in\mathcal{\dot{C}}^{\infty}(X)$, $i=1,2$. Then,
the formula%
\begin{equation}
\int_{X}\left(  u_{1}r_{2}-u_{2}r_{1}\right)  ~\omega^{m}=\left(  2s-m\right)
\int_{M}\left(  F_{1}G_{2}-F_{2}G_{1}\right)  ~\psi\label{eq.bp.1}%
\end{equation}
holds.
\end{proposition}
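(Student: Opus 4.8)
The plan is to run the standard boundary-pairing argument of geometric scattering theory (cf.\ \cite{GZ:2003}): apply Green's theorem \eqref{eq.Green} on the truncated region $\{x>\varepsilon\}$ — where $u_1,u_2$ are smooth up to the boundary hypersurface $\{x=\varepsilon\}$, so the theorem applies — and then let $\varepsilon\to 0$, extracting the boundary contribution from the prescribed asymptotics of $u_1$ and $u_2$.

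First, since $\left(\Delta_\varphi - s(m-s)\right)u_i = r_i$ for $i=1,2$, the $s(m-s)$ terms cancel in the skew combination, so that $u_1\Delta_\varphi u_2 - u_2\Delta_\varphi u_1 = u_1 r_2 - u_2 r_1$, and the left-hand side of \eqref{eq.Green} becomes $\int_{x>\varepsilon}(u_1 r_2 - u_2 r_1)\,\omega^m$. Here $r_i\in\mathcal{\dot{C}}^{\infty}(X)$ vanishes to infinite order at $x=0$, while $\Re s=m/2$ forces $|x^{m-s}|=|x^{s}|=x^{m/2}$ and hence $|u_i|\lesssim x^{m/2}$, and by \eqref{eq.volume} the form $\omega^m$ has only an $x^{-m-1}$ singularity; thus $u_1 r_2 - u_2 r_1$ is $O(x^{\infty})$ near $M$, the integral over $X$ is absolutely convergent, and the left side of \eqref{eq.Green} tends to $\int_X(u_1 r_2-u_2 r_1)\,\omega^m$ as $\varepsilon\to 0$.

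The substance of the proof is the boundary term. Using $\nu = -\tfrac{x}{\sqrt{1+rx}}\,\partial_x$ from \eqref{eq.nu.2} and $u_i = x^{m-s}F_i + x^{s}G_i$, one finds $\nu u_i = -(m-s)\,x^{m-s}F_i - s\,x^{s}G_i$ modulo a sum of terms $x^{\lambda}$ with $\Re\lambda>m/2$, the correction absorbing the Taylor tails of $F_i$ and $G_i$, the expansion of $(1+rx)^{-1/2}$, and $x\,\partial_x$ applied to $F_i,G_i$. Substituting into the skew combination, the products $x^{2(m-s)}F_1F_2$ and $x^{2s}G_1G_2$ — whose exponents have real part exactly $m$ — cancel identically because $F_1F_2=F_2F_1$ and $G_1G_2=G_2G_1$, leaving
\[
u_1\,\nu u_2 - u_2\,\nu u_1 = (2s-m)\,x^{m}\,(F_1G_2-F_2G_1) + (\text{terms } x^{\lambda},\ \Re\lambda>m),
\]
with $F_i,G_i$ now denoting boundary restrictions. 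Pairing this against $\nu\lrcorner\omega^m$ on $\{x=\varepsilon\}$, whose leading behavior is $\varepsilon^{-m}\psi$ by \eqref{eq.surface} and \eqref{eq.psi} — using $(d\theta_\varepsilon)^n\wedge\theta_\varepsilon\to(d\theta)^n\wedge\theta$ under the collar identification $M^\varepsilon\cong M$ — the order-$m$ part contributes exactly $(2s-m)\int_M(F_1G_2-F_2G_1)\,\psi$, while every remaining term is $\varepsilon^{\lambda}$ with $\Re\lambda>0$, hence $o(1)$. Equating the limits of the two sides of \eqref{eq.Green} yields \eqref{eq.bp.1}.

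The one genuinely delicate point — and the only place the hypothesis $\Re s=m/2$ is used — is this cancellation: on the critical line the exponents $2(m-s)$, $2s$, and $(m-s)+s=m$ all have real part $m$, so after dividing by the $\varepsilon^{m}$ scaling of the surface measure the first two would produce the oscillatory factors $\varepsilon^{2(m-s)-m}$ and $\varepsilon^{2s-m}$, which have no limit as $\varepsilon\to 0$; it is precisely the antisymmetry of $u_1\nu u_2 - u_2\nu u_1$ that annihilates their coefficients and leaves only the $\varepsilon$-independent term. The remaining bookkeeping — lower-order contributions in $\nu u_i$ and in $\nu\lrcorner\omega^m$, and the collar identification — is routine, and the precise value of the constant (including its sign) is fixed once one pins down the orientation of $\psi$ relative to $M=\partial X$ and the normalization in \eqref{eq.Green}.
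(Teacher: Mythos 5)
Your argument is correct and is exactly the ``standard computation'' the paper's proof invokes (Green's theorem on $\{x>\varepsilon\}$ via (\ref{eq.Green}), the normal field (\ref{eq.nu.2}), the surface measure (\ref{eq.surface}), and the prescribed asymptotics of the $u_i$); in particular you correctly isolate the essential point that on the critical line the skew-symmetrization annihilates the oscillatory $x^{2(m-s)}$ and $x^{2s}$ terms, leaving only the cross terms at exact order $x^{m}$, which balance the $\varepsilon^{-m}$ of $\nu\lrcorner\omega^{m}$ and produce the constant $2s-m$. Your deferral of the overall sign to the orientation and normalization conventions is acceptable, since the paper's one-line proof does not pin these down either.
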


\begin{proof}
A standard computation using (\ref{eq.Green}) and (\ref{eq.nu.2}) together
with (\ref{eq.surface}) and (\ref{eq.lap}).
\end{proof}

\begin{remark}
For $\operatorname{Re}(s)=m/2$ complex conjugation reverses the roles of $s$
and $m-s$. Thus we obtain the formula%
\begin{equation}
\int_{X}\left(  u_{1}\overline{r_{2}}-\overline{u_{2}}r_{1}\right)
~\omega^{m}=(2s-m)\int_{M}\left(  F_{1}\overline{F_{2}}-G_{1}\overline{G_{2}%
}\right)  ~\psi\label{eq.bp.2}%
\end{equation}

\end{remark}

For later use, we note an extension of the boundary pairing formula analogous
to Proposition 3.3 of \cite{GZ:2003}.

\begin{proposition}
\label{prop.bp.extended}Suppose that $\operatorname{Re}(s)>m/2$ and
$2s-m\notin\mathbb{N}$. Suppose that $u_{i}\in\mathcal{C}^{\infty}%
(\mathring{X})$ takes the form%
\begin{equation}
u_{i}=x^{m-s}F_{i}+x^{s}G_{i} \label{eq.u.form}%
\end{equation}
(\ref{eq.u.form}) and $\left(  \Delta_{\varphi}-s(m-s)\right)  u_{i}=0$,
$i=1,2$. Then%
\[
\operatorname*{FP}_{\varepsilon\downarrow0}\left(  \int_{x>\varepsilon}\left[
\left\langle \nabla u_{1},\nabla u_{2}\right\rangle -s(m-s)u_{1}u_{2}\right]
~\omega^{m}\right)  =-m\int_{M}G_{1}F_{2}~\psi=-m\int_{M}F_{1}G_{2}~\psi
\]
where $\operatorname*{FP}$ denotes the Hadamard finite part of the integral as
$\varepsilon\downarrow0$.
\end{proposition}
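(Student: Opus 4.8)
The plan is to mimic the proof of Proposition 3.3 in Graham--Zworski \cite{GZ:2003}, integrating by parts the Dirichlet form on the truncated region $\{x>\varepsilon\}$, extracting the boundary term at $x=\varepsilon$, and reading off the finite part as $\varepsilon\downarrow 0$. First I would apply Green's identity in the form
\[
\int_{x>\varepsilon}\left[\langle\nabla u_1,\nabla u_2\rangle - s(m-s)u_1u_2\right]\omega^m
= -\int_{x>\varepsilon}u_2\left(\Delta_\varphi - s(m-s)\right)u_1\,\omega^m
+ \int_{x=\varepsilon}u_2\,(\nu u_1)\,\nu\,\lrcorner\,\omega^m ,
\]
and since $(\Delta_\varphi - s(m-s))u_1=0$, the bulk term drops out and only the surface term at $x=\varepsilon$ survives. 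Using \eqref{eq.surface} and \eqref{eq.nu.2}, together with the normalization \eqref{eq.psi}, the surface integral becomes $\varepsilon^{-m}(1+r\varepsilon)^{1/2}$ times $\int_{M^\varepsilon}u_2\left(-x\sqrt{1+rx}\,\partial_x u_1\right)(d\theta)^n\wedge\theta$, evaluated at $x=\varepsilon$, which up to smooth corrections in $\varepsilon$ is $\int_{M^\varepsilon} u_2\,(-x\partial_x u_1)\,\psi$ with an extra factor $\varepsilon^{-m}(1+O(\varepsilon))$ absorbed into the density.

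Next I would plug in the expansions $u_i = x^{m-s}F_i + x^s G_i$ with $F_i,G_i\in\mathcal{C}^\infty(X)$ and expand each factor in powers of $x$ restricted to $x=\varepsilon$. The product $u_2\,(-x\partial_x u_1)$ then has an asymptotic expansion in powers of $\varepsilon$ with exponents built from $\{m-2s,\, 0,\, 2s-m\}$ shifted by nonnegative integers (the half-integer shifts from the $\mathcal{C}^\infty(X)$-Taylor expansions in $x$); the key point is that because $\operatorname{Re}(s)>m/2$, the $\varepsilon^{m-2s}$-type terms blow up, the $\varepsilon^{2s-m}$-type terms vanish, and the constant ($\varepsilon^0$) term — the finite part — comes \emph{only} from pairing $x^{m-s}F_1$ with the $x$-derivative of $x^s G_2$ and $x^{s}G_1$ with the $x$-derivative of $x^{m-s}F_2$, since these are the cross terms whose exponents cancel to give $x^{m}$ (which combines with the $\varepsilon^{-m}$ from the surface density). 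Here the hypothesis $2s-m\notin\mathbb{N}$ guarantees that none of the divergent powers accidentally coincide with the constant term, so the finite part is unambiguous. Explicitly $-x\partial_x(x^s G_2)\big|_{x=\varepsilon}=-s\varepsilon^{s}G_2 + O(\varepsilon^{s+1})$ and $-x\partial_x(x^{m-s}F_2)=-(m-s)\varepsilon^{m-s}F_2+O(\varepsilon^{m-s+1})$, so the surviving constant term of the integrand, after multiplying by $\varepsilon^{-m}$, is $-s\,F_1G_2 - (m-s)\,G_1F_2$ evaluated on $M$.

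At this stage I would note that the boundary pairing Proposition \ref{prop.bp} (more precisely the fact that $(2s-m)\int_M(F_1G_2 - F_2G_1)\psi = 0$ when both $r_i=0$) forces $\int_M F_1 G_2\,\psi = \int_M F_2 G_1\,\psi$ on the critical line, and by analytic continuation in $s$ this identity of boundary integrals persists wherever both sides are defined; hence $-s\int_M F_1G_2\,\psi - (m-s)\int_M G_1F_2\,\psi = -s\int_M F_1G_2\,\psi - (m-s)\int_M F_1G_2\,\psi = -m\int_M F_1G_2\,\psi$, and symmetrically $=-m\int_M G_1 F_2\,\psi$, giving the claimed formula. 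The main obstacle I anticipate is bookkeeping the asymptotic expansion carefully enough to be \emph{certain} no other term contributes to the finite part: one must check that all the "diagonal" contributions (like $x^{m-s}F_1$ paired with $-x\partial_x(x^{m-s}F_2) \sim x^{2m-2s}$, and the integer-shifted descendants of each block) produce powers $\varepsilon^{k}$ with $k<0$ or $k>0$ but never $k=0$ — this is exactly where $\operatorname{Re}(s)>m/2$ and $2s-m\notin\mathbb{N}$ are both essential — and that the $O(\varepsilon)$ corrections hidden in the surface density \eqref{eq.surface} and in the normal vector \eqref{eq.nu.2} only shift exponents upward and so do not disturb the finite part. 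I would handle this by writing $u_2\,(-x\sqrt{1+rx}\,\partial_x u_1)$ as a single asymptotic series in $x$ and invoking that the finite part operator annihilates every non-constant power, leaving only the $x^m$-coefficient.
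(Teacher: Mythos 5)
Your proposal is correct and follows essentially the same route as the paper: Green's identity on $\{x>\varepsilon\}$ kills the bulk term, and the Hadamard finite part of the surface integral at $x=\varepsilon$ is read off from the cross terms $x^{m-s}F_1\cdot x\partial_x(x^sG_2)$ and $x^sG_1\cdot x\partial_x(x^{m-s}F_2)$, exactly as in the paper's (much terser) one-line proof. One small repair: the symmetry $\int_M F_1G_2\,\psi=\int_M F_2G_1\,\psi$ for $\Re(s)>m/2$ should not be justified by ``analytic continuation'' of Proposition \ref{prop.bp} (your $u_i$ are fixed solutions at a fixed $s$, not members of a holomorphic family), but it follows immediately from your own computation: the Dirichlet form is symmetric in $u_1,u_2$, so equating the finite parts of $\int u_1(\nu u_2)$ and $\int u_2(\nu u_1)$ yields $(2s-m)\int_M(F_1G_2-F_2G_1)\,\psi=0$, and $2s-m\neq0$.
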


\begin{proof}
Green's formula (\ref{eq.Green}) for the operator $\Delta_{\varphi}-s(m-s)$
gives%
\[
\int_{x>\varepsilon}\left[  \left\langle \nabla u_{1},\nabla u_{2}%
\right\rangle -s(m-s)u_{1}u_{2}\right]  \omega^{m}=\int_{x=\varepsilon}%
u_{1}\left(  \nu u_{2}\right)  ~\nu~\lrcorner~\omega^{m}%
\]
from which the claimed formulae follow.
\end{proof}

\subsection{The Poisson Map}

We now prove that the Dirichlet problem (\ref{eq.dp}) has a unique solution if
$\operatorname{Re}(s)\geq m/2$, $2s-m\notin\mathbb{Z}$, and $s(m-s)$ is not an
eigenvalue of $\Delta_{\varphi}$. Most of the formal arguments are almost
identical to the case of even asymptotically hyperbolic manifolds considered
in \cite{GZ:2003} since the form of the normal operator (\ref{eq.normal}) for
the Laplacian is the same.

\begin{lemma}
\label{lemma.formal}Suppose that $u\in\mathcal{C}^{\infty}(\mathring{X})$
satisfies $u=x^{m-s}F+x^{s}G$ for functions $F$ and $G$ belonging to
$\mathcal{C}^{\infty}(X)$, and that%
\begin{equation}
\left(  \Delta_{\varphi}-s(m-s)\right)  u\in\mathcal{\dot{C}}^{\infty}(X)
\label{eq.approx.ef}%
\end{equation}
for $s\in\mathbb{C}$ with $2s-m\notin\mathbb{Z}$. Then the Taylor expansions
of $F$ and $G$ at $x=0$ are formally determined respectively by $\left.
F\right\vert _{M}$ and $\left.  G\right\vert _{M}$. In particular, we have
$F\sim\sum_{k\geq0}x^{k}f_{k}$ and $G\sim\sum_{k\geq0}x^{k}g_{k}$ where%
\begin{equation}
f_{k}=\frac{1}{k!}\frac{\Gamma(2s-m-k)}{\Gamma(2s-m)}P_{k,s}f_{0}
\label{eq.Pks}%
\end{equation}
and%
\[
g_{k}=\frac{1}{k!}\frac{\Gamma(m-2s-k)}{\Gamma(m-2s)}P_{k,m-s}g_{0}%
\]
where $P_{k,s}$ are differential operators of order $2k$ holomorphic in $s$
with leading symbol\footnote{Here in the sense of the ordinary (rather than
the Heisenberg) calculus on $M$.}%
\[
\sigma(P_{k,s})=\frac{1}{2^{k}}\sigma\left(  -\Delta_{b}^{k}\right)
\]

\end{lemma}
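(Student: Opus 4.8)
The plan is to substitute the ansatz $u = x^{m-s}F + x^s G$ into the eigenvalue equation and solve the resulting recursion order by order in $x$, using the asymptotic expansion $\Delta_\varphi \sim \sum_{k\ge 0} x^k L_k$ from \eqref{eq.Deltag.exp}, with $L_0$ given by \eqref{eq.normal}. First I would treat the two summands separately. Because $2s - m \notin \mathbb{Z}$, the exponents $m-s$ and $s$ are non-congruent mod $1$, so the equations for $F$ and $G$ decouple at the level of formal power series: the condition $(\Delta_\varphi - s(m-s))u \in \dot{\mathcal{C}}^\infty(X)$ forces both $(\Delta_\varphi - s(m-s))(x^{m-s}F)$ and $(\Delta_\varphi - s(m-s))(x^s G)$ to vanish to all orders at $x=0$. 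I will do the computation for $x^{m-s}F$ with $F \sim \sum_{k\ge 0} x^k f_k$; the expansion for $x^s G$ follows by the substitution $s \mapsto m-s$, which is the symmetry already visible in the structure of $L_0$ and in the problem.

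The key computation is the action of $\Delta_\varphi$ on $x^{m-s+k} f_k$. The indicial operator acts by $L_0(x^{m-s+k}) = -\big((m-s+k)^2 - m(m-s+k)\big)x^{m-s+k} = -(k)(2s-m-k)\cdot(-1)\,x^{m-s+k}$; more precisely $L_0$ applied to $x^{\mu}$ gives $-(\mu^2 - m\mu)x^\mu = -\mu(\mu-m)x^\mu$, and with $\mu = m-s+k$ this is $(s-k)(s-k-m)\cdot(-1)$... I would carefully record that the coefficient is $-(m-s+k)(k-s) = (s-k)(m-s+k)$, which for the leading term $k=0$ vanishes precisely because $\mu = m-s$ solves the indicial equation, and for $k \ge 1$ is a nonzero multiple of $k(2s-m-k)$ (nonvanishing since $2s-m \notin \mathbb{Z}$). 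Collecting the coefficient of $x^{m-s+k}$ in $(\Delta_\varphi - s(m-s))u$, I get a relation of the form
\[
b_k\, f_k = -\sum_{j=1}^{k} L_j f_{k-j} + (\text{lower-order corrections from } L_0 \text{ acting on } x^j f_j),
\]
where $b_k$ is the nonzero indicial factor and the right side involves only $f_0, \dots, f_{k-1}$. This gives a well-defined recursion expressing $f_k$ as a differential operator of order $2k$ applied to $f_0$, since each $L_j$ for $j \ge 1$ has order at most $2$ in the tangential variables (indeed $L_1$ from \eqref{eq.next} has principal part $-\tfrac12\Delta_b$), so composing $k$ of them down the recursion produces order $2k$. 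The leading symbol is governed entirely by iterating the $-\tfrac12\Delta_b$ piece of $L_1$ against the indicial denominators, which is where the explicit Gamma-function prefactor $\frac{1}{k!}\frac{\Gamma(2s-m-k)}{\Gamma(2s-m)}$ comes from: telescoping the product $\prod_{j=1}^{k} b_j^{-1}$ with $b_j$ proportional to $j(2s-m-j)$ yields exactly that ratio, and pulling it out defines $P_{k,s}$ with symbol $2^{-k}\sigma(-\Delta_b^k)$.

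The main obstacle I anticipate is bookkeeping rather than conceptual: one must be careful that $L_0$ acting on $x^{m-s+j}f_j$ contributes to the coefficient of $x^{m-s+k}$ only when $j = k$ (it is homogeneous), so the cross terms all come from $L_j$, $j \ge 1$, and verifying that the resulting $P_{k,s}$ is genuinely holomorphic in $s$ — i.e. that the $1/b_k$ denominators are absorbed into the explicit $\Gamma$-ratio leaving no residual poles — requires tracking that every $b_j = -j(2s-m-j)$ appearing in the recursion is matched by a corresponding factor in $\Gamma(2s-m-k)/\Gamma(2s-m) = \prod_{j=1}^k (2s-m-j)$. Once that matching is checked, holomorphy in $s$ and the order/symbol claims follow. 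I would close by noting that the $G$-expansion is identical under $s \leftrightarrow m-s$, giving the stated formula for $g_k$, and that the whole argument is purely formal (no convergence needed) since we only claim that the Taylor coefficients are determined, which is exactly the content of the lemma.
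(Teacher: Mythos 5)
Your proposal is correct and follows essentially the same route as the paper: decouple the $F$ and $G$ series using $2s-m\notin\mathbb{Z}$, compute the indicial factor $k(2s-m-k)$ from $L_0$, solve the resulting recursion, and absorb the telescoped denominators $\prod_{j=1}^{k} j(2s-m-j)$ into the prefactor $\tfrac{1}{k!}\Gamma(2s-m-k)/\Gamma(2s-m)$, with the symbol coming from iterating the $-\tfrac12\Delta_b$ part of $L_1$. One small slip: you write $\Gamma(2s-m-k)/\Gamma(2s-m)=\prod_{j=1}^{k}(2s-m-j)$, whereas it is the reciprocal of that product — but your surrounding telescoping argument uses the correct relation.
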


\begin{proof}
Recall the asymptotic development (\ref{eq.Deltag.exp}) for the Laplacian
which we use to derive a recurrence for the Taylor coefficients $f_{k}$ and
$g_{k}$ of $F$ and $G$. For $2s-m\notin\mathbb{Z}$, we may consider the terms
involving $F$ and $G$ separately. We first consider $F$. Observe that
\[
(L_{0}-s(m-s))\left(  x^{m-s+k}f\right)  =k(2s-m-k)x^{s+k}f
\]
for $f\in\mathcal{C}^{\infty}(M)$. Since $L_{k}=P(x\partial_{x},\partial_{y})$
for a defining function $x$ and boundary coordinates $y$ where $P$ is a
polynomial of degree at most two with smooth coefficients, the operators%
\[
Q_{k,\ell}(s)=x^{-m+s-\ell}L_{k-\ell}x^{m-s+\ell}%
\]
are differential operators of order at most two depending holomorphically on
$s$. If $u\sim\sum_{k=0}^{\infty}x^{m-s+k}f_{k}$, it follows from
(\ref{eq.approx.ef}) and (\ref{eq.Deltag.exp}) that for any $k\geq1$,
\begin{equation}
f_{k}=-\frac{1}{k(2s-m-k)}\sum_{\ell=0}^{k-1}Q_{k,\ell}(s)f_{\ell}
\label{eq.fk}%
\end{equation}
Similarly, if $u\sim\sum_{k\geq0}x^{s+k}g_{k}$ for $g_{k}\in\mathcal{C}%
^{\infty}(M)$, we have%
\begin{equation}
g_{k}=-\frac{1}{k(m-2s-k)}\sum_{\ell=0}^{k-1}Q_{k,\ell}(m-s)g_{\ell}
\label{eq.gk}%
\end{equation}
The formulas for $f_{k}$, $g_{k}$, and $P_{k,s}$ follow easily from these
formulas and the fact that%
\[
Q_{k,k-1}(s)=\frac{1}{4}\left(  -2\Delta_{b}u-4r_{0}\left(  m-s+1\right)
-4r_{0}\left[  \left(  m-s+1\right)  ^{2}-\left(  m-s+1\right)  \right]
\right)
\]

\end{proof}

\begin{remark}
\label{rem.pl}We will write $p_{k,s}$ for the operator with $f_{k}%
=p_{k,s}f_{0}$, so that $p_{k,s}$ is meromorphic with poles at
$s=m/2+k/2,\ldots,m/2+1/2$. We will denote%
\[
p_{\ell}=\operatorname*{Res}_{s=m/2+\ell/2}p_{\ell,s}%
\]
The operator $p_{\ell}$ is a differential operator of order at most $2\ell$
with principal symbol
\[
\sigma(p_{\ell})=\frac{1}{2^{\ell}\ell!(\ell-1)!}\sigma\left(  -\Delta
_{b}^{\ell}\right)
\]

\end{remark}

For $\operatorname{Re}(s)>m/2$, let
\[
R(s)=\left(  \Delta_{\varphi}-s(m-s)\right)  ^{-1}%
\]
be the $L^{2}(X)$ resolvent, let $\sigma_{p}(\Delta_{\varphi})$ denote the set
of $L^{2}$-eigenvalues of $\Delta_{\varphi}$, and let%
\[
\Sigma=\left\{  s:\operatorname{Re}(s)>m/2,~s(m-s)\in\sigma_{p}(\Delta
_{\varphi})\right\}  .
\]
We will now solve the Dirichlet problem (\ref{eq.dp}) for $\operatorname{Re}%
(s)\geq m/2$ and $s\notin\Sigma$.

The following result is an easy consequence of the work of Epstein, Melrose,
and Mendoza \cite{EMM:1991}, noting that in our case the K\"{a}hler metric is
an even metric, i.e., depends smoothly on the defining function $\varphi$ (and
not simply on its square root).

\begin{proposition}
\label{prop.emm}The set $\Sigma$ contains at most finitely many points, and
the resolvent operator $R(s)$ is a meromorphic operator-valued function for
$\operatorname{Re}(s)>m/2-1/2$ having at most finitely many, finite-rank poles
at $s\in\Sigma$. Moreover, for $s\notin\Sigma$,and $\operatorname{Re}%
(s)>m/2-1/2$, $R(s):\mathcal{\dot{C}}^{\infty}(X)\rightarrow x^{s}%
\mathcal{C}^{\infty}(X)$.
\end{proposition}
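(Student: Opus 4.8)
The plan is to deduce the proposition from the $\Theta$-pseudodifferential calculus of Epstein, Melrose, and Mendoza \cite{EMM:1991} (see also Guillarmou--S\'a Barreto \cite{GSb:2006}), the one point requiring attention being that our metric is \emph{even}, i.e.\ it depends smoothly on the boundary defining function rather than on its square root. First I would observe, from the explicit normal form (\ref{eq.gphi}) together with the fact that the transverse curvature $r$ and the Levi matrix $h_{\alpha\overline{\beta}}$ have Taylor expansions to all orders in $\varphi$ at $M$, that $g_{\varphi}$ is a $\Theta$-metric in the sense of \cite{EMM:1991} whose coefficients are smooth functions of $x=-\varphi$; equivalently, $g_{\varphi}$ is an even asymptotically complex hyperbolic metric, and the indicial (normal) operator of $\Delta_{\varphi}-s(m-s)$ is $L_{0}-s(m-s)$ with $L_{0}$ as in (\ref{eq.normal}). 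Its indicial roots are the solutions $\mu$ of $\mu(m-\mu)=s(m-s)$, namely $\mu=s$ and $\mu=m-s$.

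Next I would treat the spectral side. Since $\Delta_{\varphi}$ is nonnegative and self-adjoint with essential spectrum $[m^{2}/4,\infty)$ and, by \cite{EMM:1991}, has only finitely many $L^{2}$-eigenvalues, each of finite multiplicity, the set $\Sigma$ is finite and $R(s)$, which for $\operatorname{Re}(s)>m/2$ is the $L^{2}$ resolvent, is holomorphic on $\{\operatorname{Re}(s)>m/2\}\setminus\Sigma$ with finite-rank poles at the points of $\Sigma$. The continuation across the line $\operatorname{Re}(s)=m/2$ is where the $\Theta$-calculus enters: \cite{EMM:1991} construct a parametrix for $R(s)$ whose Schwartz kernel lifts to a polyhomogeneous conormal distribution on the $\Theta$-stretched product $X\times_{\Theta}X$, with leading exponents at the two side faces dictated by the indicial roots $\mu=s$ and $\mu=m-s$; the parametrix error is compact and depends holomorphically on $s$, so analytic Fredholm theory yields a meromorphic continuation of $R(s)$ with finite-rank poles. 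Because the metric is even, the polyhomogeneous expansions entering the parametrix involve only integer powers of $x$, so this continuation is regular (apart from its poles) on the larger half-plane $\operatorname{Re}(s)>m/2-1/2$ --- indeed on all of $\mathbb{C}$ --- and in that half-plane the only poles are those already present in $\Sigma$.

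Finally, the mapping property follows by reading off the boundary behavior of this kernel. If $f\in\mathcal{\dot{C}}^{\infty}(X)$ vanishes to infinite order at $x=0$, then $R(s)f$ receives no contribution from the diagonal or the right face, so its asymptotics at $M$ are governed by the left face of $X\times_{\Theta}X$, where the leading exponent is the indicial root $\mu=s$ --- the competing root $\mu=m-s$ being excluded because for $\operatorname{Re}(s)>m/2$ the corresponding solution is not $L^{2}$ and is absent from the $L^{2}$ resolvent, hence, by uniqueness of the meromorphic continuation, from $R(s)$ off $\Sigma$. Evenness once more ensures this expansion runs over integer powers of $x$, giving $R(s):\mathcal{\dot{C}}^{\infty}(X)\to x^{s}\mathcal{C}^{\infty}(X)$. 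The proposition introduces no new analysis; the only real work is the bookkeeping that identifies (\ref{eq.gphi}) with an even $\Theta$-metric and thereby upgrades the half-plane of meromorphy from $\operatorname{Re}(s)>m/2$ to $\operatorname{Re}(s)>m/2-1/2$ and forces integer-power expansions in the range of $R(s)$ --- this is precisely the content imported from \cite{EMM:1991} and \cite{GSb:2006}, and is the step I expect to be the most delicate to state carefully.
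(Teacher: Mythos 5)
Your proposal is correct and follows essentially the same route as the paper, which offers no written proof at all beyond the remark that the proposition ``is an easy consequence of the work of Epstein, Melrose, and Mendoza, noting that in our case the K\"ahler metric is an even metric, i.e., depends smoothly on the defining function $\varphi$.'' Your expansion of that one-line citation --- identifying $g_{\varphi}$ as an even $\Theta$-metric, reading off the indicial roots $s$ and $m-s$ from $L_{0}$, invoking the finiteness of the point spectrum below $m^{2}/4$, and extracting the mapping property from the left-face expansion of the resolvent kernel --- is exactly the content the authors intend the reader to import from \cite{EMM:1991}.
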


First, we prove uniqueness of solutions to the Dirichlet problem (\ref{eq.dp})
for $s$ with $\operatorname{Re}(s)\geq m/2$, $s\notin\Sigma$, and
$2s-m\notin\mathbb{Z}$.

\begin{proposition}
Suppose that $\operatorname{Re}(s)\geq m/2$, $s\notin\Sigma$, and
$2s-m\notin\mathbb{Z}$. Suppose that $u\in\mathcal{C}^{\infty}(\mathring{X})$
with $\left(  \Delta_{\varphi}-s(m-s)\right)  u=0$, and that $u=x^{m-s}%
F+x^{s}G$ with $\left.  F\right\vert _{M}=0$. Then $u=0$.
\end{proposition}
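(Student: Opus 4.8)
The plan is to show that under the stated hypotheses $u$ must actually lie in $L^2(\mathring X)$, so that it is an eigenfunction with eigenvalue $s(m-s)$; since $s\notin\Sigma$, this forces $u=0$. The argument proceeds in three steps, paralleling Proposition 3.2 and the surrounding discussion in Graham--Zworski \cite{GZ:2003}.

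First I would use the formal determinacy result, Lemma \ref{lemma.formal}. Since $2s-m\notin\mathbb{Z}$, the coefficients $f_k$ in the Taylor expansion $F\sim\sum_{k\geq 0}x^k f_k$ are determined by $f_0=F|_M$ via the recurrence \eqref{eq.fk}; as $f_0=F|_M=0$, every $f_k$ vanishes, so $F\in\mathcal{\dot C}^\infty(X)$, i.e. $F$ vanishes to infinite order at $x=0$. Consequently $u=x^{m-s}F+x^{s}G$ with $x^{m-s}F\in\mathcal{\dot C}^\infty(X)\subset x^s\mathcal C^\infty(X)$, and hence $u\in x^s\mathcal C^\infty(X)$.

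Next I would check that $u\in x^s\mathcal C^\infty(X)$ combined with $\operatorname{Re}(s)\geq m/2$ places $u$ in $L^2$. The volume form is $\omega_\varphi^m$, which by \eqref{eq.volume} behaves like $x^{-m-1}\,dx\wedge(\text{smooth form on }M)$ near $M$, so $|u|^2$ integrable near $x=0$ requires $2\operatorname{Re}(s)-m-1>-1$, i.e. $\operatorname{Re}(s)>m/2$. This handles the strict case immediately. For the borderline $\operatorname{Re}(s)=m/2$ one argues slightly more carefully: here $u=x^{m-s}F+x^sG$ with $F\in\mathcal{\dot C}^\infty$ so the first term is negligible, and one applies the boundary pairing formula \eqref{eq.bp.2} with $u_1=u_2=u$ and $r_1=r_2=0$. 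Since $F|_M=0$ the right-hand side is $(2s-m)\int_M(-|G|_M|^2)\,\psi$, while the left side vanishes; taking the appropriate combination (or using that $\Delta_\varphi$ is symmetric and $s(m-s)$ is real on the critical line) forces $G|_M=0$ as well. Then $g_0=0$, and by the companion recurrence \eqref{eq.gk} all $g_k$ vanish, so $G\in\mathcal{\dot C}^\infty(X)$ too; hence $u\in\mathcal{\dot C}^\infty(X)$, which is certainly $L^2$.

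Finally, with $u\in L^2(\mathring X)$ and $(\Delta_\varphi-s(m-s))u=0$, $u$ is an $L^2$ eigenfunction. Either $u=0$ or $s(m-s)\in\sigma_p(\Delta_\varphi)$; the latter would mean $s\in\Sigma$, contrary to hypothesis. Hence $u=0$. The main obstacle is the borderline case $\operatorname{Re}(s)=m/2$, where membership in $x^s\mathcal C^\infty(X)$ is not by itself enough for $L^2$ and one must extract the extra vanishing of $G|_M$ from the boundary pairing formula; once that is done the rest is routine. One should also take care that the manipulation of \eqref{eq.bp.2} is legitimate, i.e. that $u$ genuinely has the polyhomogeneous form required there with $r_i\in\mathcal{\dot C}^\infty(X)$ (in fact $r_i=0$), which is immediate here.
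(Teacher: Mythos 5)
Your argument coincides with the paper's proof up to the last step: Lemma \ref{lemma.formal} forces $F\in\mathcal{\dot{C}}^{\infty}(X)$ so that $u=x^{s}G$, the case $\operatorname{Re}(s)>m/2$ follows from square-integrability and $s\notin\Sigma$, and on the critical line the boundary pairing formula (\ref{eq.bp.2}) yields $G|_{M}=0$ and hence $u\in\mathcal{\dot{C}}^{\infty}(X)$. All of that is fine and is exactly the paper's route.

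The gap is in your concluding sentence for the borderline case $\operatorname{Re}(s)=m/2$: you argue that $u\in\mathcal{\dot{C}}^{\infty}(X)\subset L^{2}$ and that ``$s(m-s)\in\sigma_{p}(\Delta_{\varphi})$ would mean $s\in\Sigma$.'' That implication is false, because $\Sigma$ is defined as $\{s:\operatorname{Re}(s)>m/2,\ s(m-s)\in\sigma_{p}(\Delta_{\varphi})\}$ --- it contains no points of the critical line at all, so the hypothesis $s\notin\Sigma$ carries no information there. Moreover, for $s=m/2+it$ one has $s(m-s)=m^{2}/4+t^{2}$, which lies in the continuous spectrum, so what is needed is the absence of \emph{embedded} eigenvalues; this is not a consequence of the finite-rank pole structure of the resolvent recorded in Proposition \ref{prop.emm}. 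The paper closes this gap differently: having shown $u\in\mathcal{\dot{C}}^{\infty}(X)$, i.e.\ that $u$ vanishes to infinite order at the boundary (equivalently, decays super-exponentially in the metric sense), it invokes the unique continuation result of Vasy--Wunsch \cite{VW:2004}, as used in \cite{GSb:2006}, to conclude $u=0$. Indeed the introduction explicitly flags that unicity on the line $\operatorname{Re}(s)=m/2$ rests on \cite{VW:2004}. You need this (or some substitute ruling out embedded eigenvalues with rapidly vanishing eigenfunctions) to finish; mere membership in $L^{2}$ together with $s\notin\Sigma$ does not suffice.
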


\begin{proof}
First, suppose that $\operatorname{Re}(s)>m/2$ and $s\notin\Sigma$. It follows
from Lemma \ref{lemma.formal} that $u=x^{s}G$ for $G\in\mathcal{C}^{\infty
}(X)$. Since $\operatorname{Re}(s)>m/2$ it is clear that $\int_{X}\left\vert
u\right\vert ^{2}~\omega^{m}<\infty$, hence $u\in L^{2}(X)$, hence $u=0$.

If $\operatorname{Re}(s)=m/2$ but $s\neq m/2$, we may again assume that
$u=x^{s}G$ for $G\in\mathcal{C}^{\infty}(X)$. Next, we set $u_{1}=u_{2}=u$ in
(\ref{eq.bp.2}) to conclude that $\int_{M}\left\vert G\right\vert ^{2}\psi=0$
so that $\left.  G\right\vert _{M}=0$. Using Lemma \ref{lemma.formal} again we
conclude that $G\in\mathcal{\dot{C}}^{\infty}(X)$, hence $u\in\mathcal{\dot
{C}}^{\infty}(X)$. As in \cite{GSb:2006}, we can now deduce from
\cite{VW:2004} that $u=0$.
\end{proof}

To prove existence of a solution of the Dirichlet problem (\ref{eq.dp}), we
follow the method of Graham and Zworski \cite{GZ:2003}. Given $f\in
\mathcal{C}^{\infty}(M)$ we can construct a formal power series solution
$u=x^{n-s}F$ modulo $\mathcal{\dot{C}}^{\infty}(X)$, and then use the
resolvent to correct this approximate solution to an exact solution. Using
Borel's lemma we can sum the asymptotic series $\sum_{j\geq0}f_{j}x^{j}$
(where $f_{j}$ is computed via (\ref{eq.fk}) with $f_{0}=f$) to a function
$F\in\mathcal{C}^{\infty}(X)$. As in \cite{GZ:2003}, we obtain:

\begin{lemma}
There is an operator $\Phi(s):\mathcal{C}^{\infty}(M)\rightarrow
x^{n-s}\mathcal{C}^{\infty}(X)$ with
\[
\left(  \Delta_{\varphi}-s(m-s)\right)  \circ\Phi:\mathcal{C}^{\infty
}(M)\rightarrow\mathcal{\dot{C}}^{\infty}(X)
\]
so that $\Gamma(m-2s)^{-1}\Phi(s)$ is holomorphic in $s$.
\end{lemma}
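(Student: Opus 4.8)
The plan is to follow the standard Graham--Zworski construction of a Poisson operator by iterating the formal solution from Lemma~\ref{lemma.formal} and then correcting with the resolvent from Proposition~\ref{prop.emm}. First I would fix $f \in \mathcal{C}^{\infty}(M)$ and build the partial sums $F^{(N)} = \sum_{j=0}^{N} x^{j} f_{j}$, where $f_0 = f$ and the $f_j$ are defined by the recurrence (\ref{eq.fk}); since $2s-m \notin \mathbb{Z}$, none of the denominators $j(2s-m-j)$ vanish and the $f_j$ are well-defined. By construction $\left(\Delta_{\varphi} - s(m-s)\right)\bigl(x^{m-s} F^{(N)}\bigr)$ vanishes to order $N$ at $x=0$, using the expansion (\ref{eq.Deltag.exp}) and the indicial computation $(L_0 - s(m-s))(x^{m-s+k} f) = k(2s-m-k) x^{s+k} f$. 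Then I would invoke Borel's lemma to produce a single $F \in \mathcal{C}^{\infty}(X)$ with $F \sim \sum_{j \geq 0} x^j f_j$, so that $\bigl(\Delta_{\varphi} - s(m-s)\bigr)(x^{m-s}F) \in \mathcal{\dot{C}}^{\infty}(X)$. Setting $\Phi_0(s) f = x^{m-s} F$ gives a first candidate.

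The delicate point is holomorphy in $s$. The coefficients $f_j$ acquire poles in $s$ from the factors $\Gamma(2s-m-k)/\Gamma(2s-m)$ appearing in (\ref{eq.Pks}); collectively these are controlled by multiplying through by $\Gamma(m-2s)^{-1}$, which kills exactly the poles at the half-integers $s = m/2 + k/2$ that obstruct the naive construction. Concretely, I would check that each $f_j$, as a function of $s$, has poles only at points of the form $m/2 + \ell/2$ with $1 \leq \ell \leq j$, and that $\Gamma(m-2s)^{-1} f_j$ is entire in $s$ (this is visible from the closed form in Lemma~\ref{lemma.formal} together with the fact that $\Gamma(m-2s)$ has simple poles precisely at $s = m/2 + \ell/2$, $\ell \geq 0$). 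One then has to carry out the Borel summation in a way that is jointly smooth in $x$ and holomorphic in $s$ on compact subsets away from the bad set; this is routine but must be stated, e.g. by summing $\Gamma(m-2s)^{-1} F$ with cutoffs chosen uniformly on $s$-compacta. Thus $\Gamma(m-2s)^{-1}\Phi_0(s)$ is holomorphic, and one sets $\Phi(s) = \Phi_0(s)$ (the operator itself, not rescaled, lands in $x^{m-s}\mathcal{C}^{\infty}(X)$ as claimed — note the statement writes $x^{n-s}$, which I read as $x^{m-s}$ with $m = n+1$, but in any case the exponent is the one dictated by the indicial root).

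I anticipate the main obstacle to be bookkeeping the $s$-dependence cleanly: one must track simultaneously (i) that the recurrence (\ref{eq.fk}) produces $f_j$ meromorphic with the stated pole locations, (ii) that $Q_{k,\ell}(s)$ depends holomorphically on $s$ (which is immediate since $L_k = P(x\partial_x, \partial_y)$ has polynomial, hence entire, dependence after conjugation by $x^{m-s+\ell}$), and (iii) that Borel summation can be performed with holomorphic dependence on the parameter $s$. Once these are in hand the mapping property $\left(\Delta_{\varphi} - s(m-s)\right)\circ\Phi : \mathcal{C}^{\infty}(M) \to \mathcal{\dot{C}}^{\infty}(X)$ follows by construction, and the holomorphy of $\Gamma(m-2s)^{-1}\Phi(s)$ follows from step (i) and the pole-cancelling property of $\Gamma(m-2s)^{-1}$. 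The remainder of the argument — correcting $\Phi(s)f$ to an exact solution via $u = \Phi(s)f - R(s)\bigl[(\Delta_{\varphi} - s(m-s))\Phi(s)f\bigr]$ and reading off the Poisson map and scattering operator — is then immediate from Proposition~\ref{prop.emm}, but that belongs to the discussion following this lemma.
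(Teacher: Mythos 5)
Your construction is correct and is exactly the route the paper takes (the paper only sketches it, deferring to Graham--Zworski): build the formal power series via the recurrence (\ref{eq.fk}), Borel-sum in $x$ with holomorphic dependence on the parameter $s$, and observe that the simple poles of the coefficients $f_{j}$ at $s=m/2+\ell/2$, $1\leq\ell\leq j$, coming from the factor $\Gamma(2s-m-j)/\Gamma(2s-m)$ in (\ref{eq.Pks}), are cancelled by the zeros of $\Gamma(m-2s)^{-1}$. You also correctly read the exponent $x^{n-s}$ in the statement as $x^{m-s}$, and your flagged caveat about cutoff-dependence in the Borel summation is precisely why the paper remarks afterwards that $\Phi(s)$ need not be linear.
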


Note that $\Phi(s)$ need not be linear as the construction of $F$ depends on
the choice of cutoff functions in the application of Borel's lemma. Now define
an operator%
\[
\mathcal{P}(s):\mathcal{C}^{\infty}(M)\rightarrow\mathcal{C}^{\infty
}(\mathring{X})
\]
for $s$ with $\operatorname{Re}(s)\geq m/2$, $s\neq m/2$ and $s \not \in
\Sigma$ by%
\[
\mathcal{P}(s)=\left[  I-R(s)\left(  \Delta_{\varphi}-s(m-s)\right)  \right]
\circ\Phi(s)
\]

\begin{lemma}
For any $f\in\mathcal{C}^{\infty}(M)$, the function $u=\mathcal{P}(s)f$ solves
the Dirichlet problem (\ref{eq.dp}), and $f\mapsto\mathcal{P}(s)f$ is a linear operator.
\end{lemma}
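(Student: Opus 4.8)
The plan is to verify the three defining conditions of the Dirichlet problem \eqref{eq.dp} directly from the construction of $\mathcal{P}(s)$, and then to check linearity separately. First I would observe that, by the preceding Lemma, $r := \left(\Delta_\varphi - s(m-s)\right)\Phi(s)f \in \mathcal{\dot C}^\infty(X)$, so that by Proposition \ref{prop.emm} (since $s \notin \Sigma$ and $\operatorname{Re}(s) \geq m/2 > m/2 - 1/2$) the function $R(s)r$ is well-defined and lies in $x^s \mathcal{C}^\infty(X)$. Applying $\Delta_\varphi - s(m-s)$ to $\mathcal{P}(s)f = \Phi(s)f - R(s)r$ gives $r - r = 0$, so the eigenvalue equation $\left(\Delta_\varphi - s(m-s)\right)u = 0$ holds. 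This is the easy part.

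Next I would address the boundary expansion $u = x^{m-s}F + x^s G$ with $F|_M = f$. By construction $\Phi(s)f \in x^{m-s}\mathcal{C}^\infty(X)$ with leading coefficient $f$ (the asymptotic series $\sum f_j x^j$ is summed by Borel's lemma starting from $f_0 = f$), so write $\Phi(s)f = x^{m-s}F_0$ with $F_0|_M = f$. Since $R(s)r \in x^s\mathcal{C}^\infty(X)$, we may write $R(s)r = x^s G_0$ with $G_0 \in \mathcal{C}^\infty(X)$. Then
\[
u = \mathcal{P}(s)f = x^{m-s}F_0 - x^s G_0 = x^{m-s}F + x^s G
\]
with $F = F_0$, $G = -G_0$, and indeed $F|_M = f$. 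Here one uses $2s - m \notin \mathbb{Z}$ (in fact $\operatorname{Re}(s) \geq m/2$ with $s \neq m/2$ ensures $x^{m-s}$ and $x^s$ have distinct, non-integer-separated orders) so that the two terms do not interfere and the decomposition is of the required form. The main subtlety to watch is simply that $F$ and $G$ genuinely belong to $\mathcal{C}^\infty(X)$ — i.e., have full Taylor expansions at $x = 0$ — which is exactly what the mapping properties in the Lemma defining $\Phi(s)$ and in Proposition \ref{prop.emm} supply.

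Finally, for linearity: although $\Phi(s)$ itself is not linear (it depends on cutoff choices in Borel's lemma), the operator $\mathcal{P}(s)$ is. Given $f_1, f_2$ and scalars $a_1, a_2$, both $\mathcal{P}(s)(a_1 f_1 + a_2 f_2)$ and $a_1 \mathcal{P}(s)f_1 + a_2 \mathcal{P}(s)f_2$ solve the Dirichlet problem with boundary datum $a_1 f_1 + a_2 f_2$; their difference solves $\left(\Delta_\varphi - s(m-s)\right)v = 0$ with $v = x^{m-s}\tilde F + x^s \tilde G$ and $\tilde F|_M = 0$, hence $v = 0$ by the uniqueness proposition proved just above (valid since $\operatorname{Re}(s) \geq m/2$, $s \notin \Sigma$, $2s - m \notin \mathbb{Z}$). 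Thus $\mathcal{P}(s)$ is linear. I expect the only real obstacle is bookkeeping the mapping properties so that the ambiguity in $\Phi(s)$ is absorbed correctly; the analytic content is entirely contained in Proposition \ref{prop.emm} and the uniqueness result.
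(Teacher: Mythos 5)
Your proposal is correct and follows essentially the same route as the paper: the eigenvalue equation is immediate from the definition of $\mathcal{P}(s)$, the decomposition $u=x^{m-s}F+x^{s}G$ with $F|_{M}=f$ comes from the mapping properties of $\Phi(s)$ and of $R(s)$ in Proposition \ref{prop.emm}, and linearity is deduced from the uniqueness of solutions to (\ref{eq.dp}). The paper states this more tersely but the content is identical.
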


\begin{proof}
The linearity of $\mathcal{P}(s)$ will follow from the unicity of the solution
to (\ref{eq.dp}). It is immediate from the definition that $\left(
\Delta_{\varphi}-s(m-s)\right)  u=0$, and from the mapping property in
Proposition \ref{prop.emm}, $u=x^{m-s}F+x^{s}G$ with $F=x^{s-m}\Phi(s)f$ and
$G=-x^{-s}R(s)\left[  \left(  \Delta_{\varphi}-s(m-s)\right)  \Phi(s)f\right]
$.
\end{proof}

We now have:

\begin{theorem}
\label{thm.unique}For $\operatorname{Re}(s)\geq m/2$, $2s-m\notin\mathbb{Z}$,
and $s\notin\Sigma$, there exists a unique solution of the Dirichlet problem
(\ref{eq.dp}).
\end{theorem}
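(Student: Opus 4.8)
The plan is to assemble Theorem \ref{thm.unique} directly from the two halves of the Dirichlet problem that have already been developed: the uniqueness statement proved in the preceding proposition, and the existence of the Poisson map $\mathcal{P}(s)$ constructed via Borel summation and the resolvent. First I would observe that the range of parameters in the hypothesis, namely $\operatorname{Re}(s)\geq m/2$ together with $2s-m\notin\mathbb{Z}$ and $s\notin\Sigma$, already forces $s\neq m/2$ (since $2s-m=0\in\mathbb{Z}$ is excluded), so the operator $\mathcal{P}(s)$ is defined on all of $\mathcal{C}^{\infty}(M)$ exactly on this parameter set.

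Next I would invoke the lemma immediately preceding the theorem: for any $f\in\mathcal{C}^{\infty}(M)$, the function $u=\mathcal{P}(s)f$ lies in $\mathcal{C}^{\infty}(\mathring{X})$, satisfies $(\Delta_{\varphi}-s(m-s))u=0$, and by the mapping property of Proposition \ref{prop.emm} admits the decomposition $u=x^{m-s}F+x^{s}G$ with $F=x^{s-m}\Phi(s)f$ and $G=-x^{-s}R(s)[(\Delta_{\varphi}-s(m-s))\Phi(s)f]$, both in $\mathcal{C}^{\infty}(X)$. The key point here is that $F|_{M}=f$: this follows because $\Phi(s)f=x^{m-s}F$ has leading Taylor coefficient $f_{0}=f$ by construction (the formal solution of Lemma \ref{lemma.formal} was built with $f_{0}=f$), and the correction term $R(s)[(\Delta_{\varphi}-s(m-s))\Phi(s)f]$ lies in $x^{s}\mathcal{C}^{\infty}(X)$ and so contributes nothing to the $x^{m-s}F$ part. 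Hence $u$ solves the Dirichlet problem \eqref{eq.dp}, establishing existence.

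For uniqueness I would simply cite the proposition proved just above the theorem statement: if $u_{1},u_{2}$ both solve \eqref{eq.dp} with the same boundary datum $f$, then $w=u_{1}-u_{2}$ satisfies $(\Delta_{\varphi}-s(m-s))w=0$ and has the form $x^{m-s}F+x^{s}G$ with $F|_{M}=0$, whence $w=0$ by that proposition — which in turn rests on the $L^{2}$ argument for $\operatorname{Re}(s)>m/2$ and on the boundary pairing formula \eqref{eq.bp.2} plus the unique continuation result of \cite{VW:2004} for $\operatorname{Re}(s)=m/2$, $s\neq m/2$. Uniqueness also retroactively gives the linearity of $\mathcal{P}(s)$, which is needed to call $\mathcal{P}(s)$ "the Poisson map."

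Since both existence and uniqueness are now in hand on the stated parameter range, there is really no serious obstacle left; the proof is essentially a two-line citation of the preceding lemma and proposition. The only thing requiring a moment's care is the bookkeeping of which boundary restriction survives in the decomposition $u=x^{m-s}F+x^{s}G$ — i.e., checking that the resolvent correction term does not pollute the $F$-leading term and that the normalization $F|_{M}=f$ genuinely holds — but this is immediate from Proposition \ref{prop.emm}'s mapping property $R(s):\mathcal{\dot C}^{\infty}(X)\to x^{s}\mathcal{C}^{\infty}(X)$ and the construction of $\Phi(s)$. I would therefore present the proof as a short paragraph: existence from the Poisson-map lemma, uniqueness from the preceding proposition, done.
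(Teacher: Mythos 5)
Your proposal is correct and follows exactly the route the paper intends: the paper gives no separate proof of Theorem \ref{thm.unique}, presenting it as an immediate consequence of the preceding lemma (existence via $\mathcal{P}(s)=[I-R(s)(\Delta_{\varphi}-s(m-s))]\circ\Phi(s)$ and Proposition \ref{prop.emm}) and the uniqueness proposition. Your extra check that the resolvent correction lies in $x^{s}\mathcal{C}^{\infty}(X)$ and hence does not disturb $F|_{M}=f$ is exactly the bookkeeping the paper leaves implicit.
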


\subsection{The Scattering Operator}

The \emph{scattering operator} for $\Delta_{\varphi}$ is the linear mapping%
\begin{align*}
S_{X}(s)  &  :\mathcal{C}^{\infty}(M)\rightarrow\mathcal{C}^{\infty}(M)\\
f  &  \mapsto\left.  G\right\vert _{M}%
\end{align*}
where $u=x^{m-s}F+x^{s}G$ solves (\ref{eq.dp}). It is well-defined by Theorem
\ref{thm.unique}.

The scattering operator has infinite-rank poles when $\operatorname{Re}%
(s)>m/2$ and $2s-m\in\mathbb{Z}$ owing to the crossing of indicial roots for
the normal operator $L_{0}$. At the exceptional points $s=m/2+k$ one expects
solutions of the eigenvalue equation $\left(  \Delta_{\varphi}-s(m-s)\right)
u=0$ having the form%
\[
u=x^{m/2-k}F+\left(  x^{m/2+k}\log x\right)  G
\]
In order to study the singularities of the scattering operator at these points
we modify the construction of the Poisson operator following the lines of
\cite{GZ:2003}, section 4.

Let $f_{1}$ and $f_{2}$ belong to \ $\mathcal{C}^{\infty}(M)$ and let $u_{1}$
and $u_{2}$ solve the corresponding Dirichlet problems for some $s$ with
$\operatorname{Re}(s)>m/2$ and $2s-m\notin\mathbb{N}$. Applying the
generalized boundary pairing formula (see Proposition \ref{prop.bp.extended})
to $u_{1}$ and $\overline{u_{2}}$ for $s$ real, we conclude that
\[
\int_{M}f_{1}\overline{S_{X}(s)f_{2}}~\psi=\int_{M}\left[  S_{X}%
(s)f_{1}\right]  \overline{f_{2}}~\psi
\]
so $S_{X}(s)$ is self-adjoint in the natural inner product on $\mathcal{C}%
^{\infty}(M)$.

Now we study the scattering operator near the exceptional points. The
arguments used here are exactly those of section 3 in \cite{GZ:2003} but we
summarize them here for the reader's convenience.

Recall the operators $p_{k,s}$ and $p_{\ell}$ defined in Remark \ref{rem.pl}.
First, we prove:

\begin{lemma}
At the points $s=m/2+\ell/2$, $\ell=1,2,\cdots$, $s\notin\Sigma$, the Poisson
map takes the form%
\[
\mathcal{P}(m/2+\ell/2)f=x^{m/2-\ell/2}F+\left(  x^{n/2+\ell/2}\log x\right)
G
\]
where%
\[
\left.  F\right\vert _{M}=f
\]
and%
\[
\left.  G\right\vert _{M}=-2p_{\ell}f
\]
where%
\begin{equation}
p_{\ell}=\operatorname*{Res}_{s=m/2+\ell/2}p_{\ell,s} \label{eq.pl}%
\end{equation}
is a differential operator of order $2\ell$ with%
\[
\sigma(p_{\ell})=\frac{1}{2^{\ell}\ell!\left(  \ell-1\right)  !}\sigma
(\Delta_{b}^{\ell})
\]

\end{lemma}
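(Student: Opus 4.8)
The plan is to track the $\log x$ term that appears in the Poisson map at the exceptional points $s = m/2 + \ell/2$, which is the standard crossing-of-indicial-roots phenomenon. Away from the exceptional points, Lemma \ref{lemma.formal} gives $\mathcal{P}(s)f \sim \sum_{k\geq 0} x^{m-s+k} f_k$ with $f_k = p_{k,s} f_0$ and $p_{k,s}$ meromorphic with a simple pole at $s = m/2 + \ell/2$ precisely when $k \geq \ell$ (this is the content of Remark \ref{rem.pl}, where $p_\ell = \operatorname*{Res}_{s=m/2+\ell/2} p_{\ell,s}$). The residue comes from the vanishing factor $k(2s-m-k)$ in the recurrence (\ref{eq.fk}): at $s = m/2 + \ell/2$ and $k = \ell$ the denominator $\ell(2s - m - \ell)$ has a simple zero, so the naive power series breaks down and must be repaired by a logarithmic term.

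First I would fix $\ell$ and consider $\mathcal{P}(s)f$ for $s$ near $m/2 + \ell/2$, writing $u(s) = \mathcal{P}(s)f = x^{m-s} F(s) + x^s G(s)$ with $F(s), G(s) \in \mathcal{C}^\infty(X)$ depending meromorphically on $s$; by the construction of the Poisson operator (via $\Phi(s)$ and the resolvent $R(s)$, using that $\Gamma(m-2s)^{-1}\Phi(s)$ and $R(s)$ are holomorphic there since $s \notin \Sigma$), the only source of a pole at $s = m/2 + \ell/2$ is the meromorphic behavior of the formal coefficients $p_{k,s}$ for $k \geq \ell$. Next I would extract the singular part: near $s_0 = m/2 + \ell/2$ one has $x^{m-s+\ell} p_{\ell,s} f \sim x^{m - s_0 + \ell}\big(1 + (s_0 - s)\log x + \cdots\big)\big(\tfrac{p_\ell f}{s_0 - s} + \text{holo}\big)$, and since $m - s_0 + \ell = m/2 + \ell/2$, the $\tfrac{1}{s_0-s}$ pole multiplies both $x^{m/2+\ell/2}$ and, via the expansion of $x^{m-s+\ell}$ in $(s_0-s)$, a term $-x^{m/2+\ell/2}\log x \cdot p_\ell f$. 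Taking the limit $s \to s_0$ of $\mathcal{P}(s)f$ (which exists and is the Poisson solution at the exceptional point, since the Dirichlet problem is still uniquely solvable there modulo the log-term ansatz), the pole contributions from the leading $x^{m-s}$-branch and from the higher coefficients must cancel except for the surviving logarithmic term; collecting these gives $u = x^{m/2 - \ell/2} F + (x^{m/2+\ell/2}\log x) G$ with $F|_M = f_0 = f$ and $G|_M = -2 p_\ell f$. The factor $-2$ comes from the precise combination: the pole of $x^{m-s+\ell} p_{\ell,s}f$ contributes $x^{m/2+\ell/2}\big(-\log x\big) p_\ell f$ to the limit, while there is a second, equal contribution from analyzing how $G(s)|_M$ itself develops at $s_0$ — more carefully, one argues as in \cite{GZ:2003}, section 3, that the residue structure forces exactly the coefficient $-2$. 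The statement about $\sigma(p_\ell)$ is immediate from Remark \ref{rem.pl}.

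The main obstacle — and the step requiring the most care — is bookkeeping the two branches $x^{m-s}F$ and $x^s G$ simultaneously as $s \to m/2 + \ell/2$: both $F(s)$ and $G(s)$ are individually singular (the $g_k$ in Lemma \ref{lemma.formal} also have poles via $\Gamma(m-2s-k)/\Gamma(m-2s)$ when $k \geq \ell$ and $s = s_0$), and one must check that the singular parts conspire so that $u(s)$ itself has a finite limit whose only non-smooth-power behavior is the single $x^{m/2+\ell/2}\log x$ term, with no $x^{m/2+\ell/2}(\log x)^2$ or residual poles. This is exactly the argument carried out in detail in \cite{GZ:2003}; since the normal operator $L_0$ in (\ref{eq.normal}) here is identical in form to theirs (only the role of the dimension parameter and the step $k/2$ versus $k$ differs), I would invoke that the same computation applies verbatim, and limit my exposition to identifying the relevant coefficients $p_\ell$ and the normalization $-2$, referring the reader to \cite{GZ:2003} for the routine cancellations.
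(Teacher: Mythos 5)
Your proposal is correct and follows essentially the same route as the paper, which implements your ``two-branch bookkeeping'' by introducing the modified operator $\Phi_{\ell}(s)=\Phi(s)-\Phi(m-s)\circ p_{\ell,s}$, showing its residue at $s=m/2+\ell/2$ lies in $\mathcal{\dot{C}}^{\infty}(X)$ so that the corrected Poisson operator is holomorphic there (this is precisely where $s\notin\Sigma$ and the absence of $L^{2}$ eigenfunctions enter, supplying the justification for your assertion that $\lim_{s\to s_0}\mathcal{P}(s)f$ exists), and then extracting the factor $-2$ from $\lim_{t\rightarrow0}(x^{-t}-x^{t})/t=-2\log x$ exactly as you describe. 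One cosmetic remark: with the convention $p_{\ell}=\operatorname{Res}_{s=s_{0}}p_{\ell,s}$ the singular part of $p_{\ell,s}f$ is $p_{\ell}f/(s-s_{0})$, the opposite sign of the $p_{\ell}f/(s_{0}-s)$ in your intermediate display, though your final coefficient $-2p_{\ell}f$ is the correct one.
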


\begin{proof}
We first show that the Poisson map $\mathcal{P}(s)$ is also regular at
$s=m/2+\ell/2$, $\ell=1,2,\cdots$ so long as these points do not belong to
$\Sigma$. As in \cite{GZ:2003} we introduce the operator%
\begin{equation}
\Phi_{\ell}(s)=\Phi(s)-\Phi(m-s)\circ p_{\ell,s} \label{eq.phi.l}%
\end{equation}
where $p_{\ell,s}$ is a differential operator of order $2\ell$ defined in
Remark \ref{rem.pl}. Each of the right-hand terms in (\ref{eq.phi.l}) has at
most a first-order pole at $s=m/2+\ell/2$; the operators $p_{j.s}$ occurring
in the definition of $\Phi(s)$ have at most first-order poles, while
$\Phi(m-s)$ is analytic in $s$ for $\operatorname{Re}(s)>m/2$. For given
$f\in\mathcal{C}^{\infty}(M)$, we compute the residue of $\Phi_{\ell}(s)f$ at
$s=m/2+\ell/2$. First%
\[
\lim_{s\rightarrow m/2+\ell/2}\left(  s-\frac{m}{2}-\frac{\ell}{2}\right)
\Phi(s)f=x^{m/2+\ell/2}\operatorname*{Res}_{s=m/2+\ell/2}\left(  p_{\ell
,s}f\right)  +\mathcal{O}\left(  x^{m/2+\ell/2+1}\right)
\]
since the remaining terms in the asymptotic expansion for $\Phi(s)f$ are
holomorphic near $s=m/2+\ell/2$. Second,%
\[
\lim_{s\rightarrow m/2+\ell/2}\left(  s-\frac{m}{2}-\frac{\ell}{2}\right)
\Phi(m-s)\left(  p_{\ell,s}f\right)  =x^{m/2+\ell/2}\operatorname*{Res}%
_{s=m/2+\ell/2}\left(  p_{\ell,s}f\right)  +\mathcal{O}\left(  x^{m/2+\ell
/2+1}\right)  .
\]
It follows that%
\begin{equation}
\operatorname*{Res}_{s=m/2+\ell/2}\Phi_{\ell}(s)f=\mathcal{O}\left(
x^{m/2+\ell/2+1}\right)  \label{eq.res}%
\end{equation}
so that, by Lemma \ref{lemma.formal}, $\operatorname*{Res}_{s=m/2+\ell/2}%
\Phi_{\ell}(s)f\in\mathcal{\dot{C}}^{\infty}(X)$.

Now let us define%
\[
\mathcal{P}_{\ell}(s)=\left[  I-R(s)\left(  \Delta_{\varphi}-s(m-s)\right)
\right]  \circ\Phi_{\ell}(s)
\]
Clearly, $\mathcal{P}_{\ell}(s)$ is holomorphic in a deleted neighborhood of
$s=m/2+\ell/2$ (with at most a first-order pole at $s=m/2+\ell/2$) and maps
$\mathcal{C}^{\infty}(M)$ into $\mathcal{C}^{\infty}(\mathring{X})$. If
$s\notin\Sigma$, it follows from the definition of $\mathcal{P}_{\ell}(s)$,
equation (\ref{eq.res}), and Proposition \ref{prop.emm} that%
\[
\operatorname*{Res}_{s=m/2+\ell/2}\mathcal{P}_{\ell}(s)f\in x^{s}%
\mathcal{C}^{\infty}(X),
\]
hence the residue is an $L^{2}(X)$ function, and hence is zero. Thus
$\mathcal{P}_{\ell}(s)$ is holomorphic at $s=m/2+\ell/2$. It follows from the
uniqueness of solutions to the Dirichlet problem that $\mathcal{P}_{\ell
}(s)=\mathcal{P}(s)$ wherever the former is defined. Exactly as in
\cite{GZ:2003} we can compute $\mathcal{P}(m/2+\ell/2)f$ by using
\ $\mathcal{P}_{\ell}(s)$, the formula%
\[
\lim_{t\rightarrow0}\frac{x^{-t}-x^{t}}{t}=-2\log x
\]
and the fact that the $p_{k,s}$ have at most simple poles at $s=m/2+\ell/2$.
This computation shows that $\mathcal{P}(m/2+\ell/2)$ has the stated form.
\end{proof}

Next, we prove:

\begin{proposition}
\label{prop.S.poles}Suppose that $\Delta_{X}$ has no eigenvalues of the form
$s(m-s)$ with $s=m/2+\ell/2$, $\ell=1,2,\cdots$. Then, the scattering operator
$S_{X}(s)$ has a first-order pole at $s=m/2+\ell/2$, $\ell=1,2,\cdots$ with%
\[
\operatorname*{Res}_{s=m/2+\ell/2}S_{X}(s)=p_{\ell}\text{.}%
\]
where $p_{\ell}$ is the differential operator given by (\ref{eq.pl}).
\end{proposition}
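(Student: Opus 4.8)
The plan is to reconcile the two descriptions of the Poisson map near the exceptional point $s_{\ell}:=m/2+\ell/2$ that the preceding lemma has set up, following Graham--Zworski \cite{GZ:2003}, Section~3. For $s$ with $\operatorname{Re}(s)>m/2$, $2s-m\notin\mathbb{N}$ and $s\notin\Sigma$, the unique solution of the Dirichlet problem has the generic form $\mathcal{P}(s)f=x^{m-s}F_{s}+x^{s}G_{s}$ with $F_{s},G_{s}\in\mathcal{C}^{\infty}(X)$, $F_{s}|_{M}=f$, $G_{s}|_{M}=S_{X}(s)f$, and by Lemma \ref{lemma.formal} and Remark \ref{rem.pl} the Taylor coefficient of $F_{s}$ of order $\ell$ is $p_{\ell,s}f$, which has a simple pole at $s_{\ell}$ with residue $p_{\ell}f$. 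On the other hand, the preceding lemma shows, using $\mathcal{P}_{\ell}(s)=[I-R(s)(\Delta_{\varphi}-s(m-s))]\Phi_{\ell}(s)$ with $\Phi_{\ell}(s)=\Phi(s)-\Phi(m-s)\circ p_{\ell,s}$, that $\mathcal{P}(s)=\mathcal{P}_{\ell}(s)$ continues holomorphically through $s_{\ell}$. The residue of $S_{X}(s)$ drops out of comparing these facts.

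Concretely, I would separate the $x^{m-s}$-type and $x^{s}$-type parts in
\[
\mathcal{P}_{\ell}(s)f=\Phi(s)f-\Phi(m-s)\bigl(p_{\ell,s}f\bigr)-R(s)\bigl[(\Delta_{\varphi}-s(m-s))\Phi_{\ell}(s)f\bigr].
\]
The term $\Phi(s)f\in x^{m-s}\mathcal{C}^{\infty}(X)$ supplies $x^{m-s}F_{s}$; the other two terms, both in $x^{s}\mathcal{C}^{\infty}(X)$ (the resolvent term by Proposition \ref{prop.emm}, since $(\Delta_{\varphi}-s(m-s))\Phi_{\ell}(s)f\in\mathcal{\dot{C}}^{\infty}(X)$), supply $x^{s}G_{s}$. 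Reading off the boundary coefficient of $x^{s}$ gives
\[
S_{X}(s)f=G_{s}|_{M}=-p_{\ell,s}f-c(s),
\]
where $c(s)$ is the coefficient of $x^{s}$ at $M$ of $R(s)[(\Delta_{\varphi}-s(m-s))\Phi_{\ell}(s)f]$. The key point is that $c(s)$ is regular at $s_{\ell}$: because $\Phi_{\ell}(s)f$ has at most a simple pole at $s_{\ell}$ whose residue lies in $\mathcal{\dot{C}}^{\infty}(X)$ (equation (\ref{eq.res})) and $R(s)$ is holomorphic at $s_{\ell}$ — this is where the hypothesis $s_{\ell}\notin\Sigma$ enters — the residue of $R(s)[(\Delta_{\varphi}-s(m-s))\Phi_{\ell}(s)f]$ at $s_{\ell}$ equals $\operatorname*{Res}_{s=s_{\ell}}\Phi_{\ell}(s)f\in\mathcal{\dot{C}}^{\infty}(X)$, which vanishes to infinite order at $M$ and so contributes nothing to the coefficient of $x^{s}$. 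Hence $S_{X}(s)$ has at most a simple pole at $s_{\ell}$, and with the normalization of $p_{\ell,s}$ fixed so that $f_{\ell}=p_{\ell,s}f_{0}$ the computation yields $\operatorname*{Res}_{s=s_{\ell}}S_{X}(s)=p_{\ell}$; since $p_{\ell}$ is a nonzero differential operator of order $2\ell$ (Remark \ref{rem.pl}) the pole is genuinely of first order.

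As a cross-check one can re-derive the same residue from the first description: since $\mathcal{P}(s)f$ is finite at $s_{\ell}$, the pole of the term $x^{m-s+\ell}p_{\ell,s}f$ in $x^{m-s}F_{s}$ must cancel against the pole of $x^{s}S_{X}(s)f$ in $x^{s}G_{s}$; writing $s=s_{\ell}+t$ and using $m-s+\ell=s_{\ell}-t$ together with $\lim_{t\to 0}\tfrac{x^{-t}-x^{t}}{t}=-2\log x$, this cancellation both fixes $\operatorname*{Res}_{s=s_{\ell}}S_{X}(s)=p_{\ell}$ and reproduces the logarithmic coefficient $-2p_{\ell}f$ found in the preceding lemma. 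The main obstacle I anticipate is exactly this bookkeeping: one must keep straight which of the two indicial exponents $x^{m-s+\ell}$, $x^{s}$ carries the pole and how they coalesce into the $\log$ term (it is easy to drop a sign or a factor of $2$ in the limit above), and one must use that the residue of the resolvent correction lies in $\mathcal{\dot{C}}^{\infty}(X)$ — vanishing to \emph{infinite} order, not merely in $x^{s_{\ell}}\mathcal{C}^{\infty}(X)$ — so that it genuinely drops out of the residue of the boundary scattering datum. Everything else is the routine adaptation of \cite{GZ:2003}, Section~3, already carried out in the proof of the preceding lemma.
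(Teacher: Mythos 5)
Your proposal follows essentially the same route as the paper: both arguments rest on the modified operator $\Phi_{\ell}(s)=\Phi(s)-\Phi(m-s)\circ p_{\ell,s}$, the holomorphy of $\mathcal{P}_{\ell}(s)=\mathcal{P}(s)$ at $s_{\ell}=m/2+\ell/2$ (using $s_{\ell}\notin\Sigma$ and the fact that the residue of the correction term lies in $\mathcal{\dot{C}}^{\infty}(X)$), and then locating the pole of the $x^{s}$-coefficient in the term $\Phi(m-s)\circ p_{\ell,s}$. Your version is, if anything, slightly more explicit than the paper's, which instead starts from $S_{X}(s)f=\left[-x^{-s}R(s)(\Delta_{\varphi}-s(m-s))\Phi(s)f\right]|_{x=0}$ and uses the holomorphy of $\mathcal{P}(s)$ to transfer the residue to $x^{-s}\Phi(s)f|_{x=0}$; the content is the same.

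One point needs attention: your displayed identity $S_{X}(s)f=-p_{\ell,s}f-c(s)$, together with the regularity of $c(s)$ at $s_{\ell}$, yields $\operatorname{Res}_{s=s_{\ell}}S_{X}(s)=-p_{\ell}$, not $+p_{\ell}$ as you then assert; and your own cross-check via cancellation of poles against the $\log$-coefficient $-2p_{\ell}f$ also produces $-p_{\ell}$. This is not a defect of your argument relative to the paper --- the paper's proof drops the same minus sign in passing from its formula for $S_{X}(s)f$ (which carries a leading $-$) to the line $\operatorname{Res}S_{X}(s)f=\operatorname{Res}\left[x^{-s}\Phi(s)f|_{x=0}\right]$ --- but as written your conclusion does not follow from your penultimate formula, so you should either identify the sign convention that reconciles $-p_{\ell,s}f-c(s)$ with the stated residue $p_{\ell}$, or note that with the normalization $f_{\ell}=p_{\ell,s}f_{0}$ the residue comes out as $-p_{\ell}$. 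Everything else --- the use of Proposition \ref{prop.emm} to place the resolvent correction in $x^{s}\mathcal{C}^{\infty}(X)$, the role of $s_{\ell}\notin\Sigma$ in making $R(s_{\ell})$ an honest inverse on $\mathcal{\dot{C}}^{\infty}(X)\subset L^{2}$, and the identification of the order of the pole --- is correct and matches the paper.
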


\begin{proof}
From the formula for the $\mathcal{P}_{\ell}(s)$, it is clear that for
$2s-m\notin\mathbb{N}$, we can compute the scattering operator from
\[
S_{X}(s)f=\left.  \left[  -x^{-s}R(s)\left(  \Delta_{\varphi}-s(m-s)\right)
\Phi(s)f\right]  \right\vert _{x=0}.
\]
Since $\mathcal{P}(s)$ is holomorphic at $s=n/2+\ell/2$ (unless $s\in\Sigma$),
it follows that%
\[
\operatorname*{Res}_{s=m/2+\ell/2}\left[  S_{X}(s)f\right]
=\operatorname*{Res}_{s=m/2+\ell/2}\left[  \left.  x^{-s}\Phi(s)f\right\vert
_{x=0}\right]  .
\]
But%
\begin{align*}
\operatorname*{Res}_{s=m/2+\ell/2}\left(  \left.  x^{-s}\Phi(s)f\right\vert
\right)  _{x=0}  &  =\operatorname*{Res}_{s=m/2+\ell/2}\left(  \left.  \left[
x^{-s}\Phi(m-s)p_{\ell,s}f\right]  \right\vert _{x=0}\right) \\
&  =\operatorname*{Res}_{s=m/2+\ell/2}\left[  p_{\ell,s}f\right]
\end{align*}
and the claimed formula holds.
\end{proof}

To connect the scattering operator and the CR $Q$-curvature, we will also need
the following result about the pole of the scattering operator at $s=m$; this
result is a direct analogue of Proposition 3.7 in \cite{GZ:2003} but we give
the short proof for the reader's convenience.

\begin{proposition}
\label{prop.S1}Let $1$ denote the constant function on $M$. Then, the formula%
\[
S_{X}(m)1=-\lim_{s\rightarrow m}p_{m,s}(1)
\]
holds.
\end{proposition}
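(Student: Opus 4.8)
The plan is to run the same argument as in Proposition \ref{prop.S.poles}, but now tracking the behavior at the \emph{special} exceptional point $s=m$, where $\ell=m$ is such that $m/2+\ell/2=m$, i.e.\ $\ell=m$. The key point is that the residue computation of the preceding proposition expresses $\operatorname{Res}_{s=m/2+\ell/2}S_X(s)f$ in terms of $\operatorname{Res}_{s=m/2+\ell/2}p_{\ell,s}f$, and the same bookkeeping, applied to the value rather than the residue, will give the stated formula once we account for the constant function being annihilated by part of the expansion. Concretely, first I would recall that for $\operatorname{Re}(s)>m/2$ and $2s-m\notin\mathbb{N}$ we have the formula
\[
S_X(s)f=\left.\left[-x^{-s}R(s)\left(\Delta_\varphi-s(m-s)\right)\Phi(s)f\right]\right|_{x=0},
\]
and that, modulo the correction term, $S_X(s)f=\left.x^{-s}\Phi(s)f\right|_{x=0}$ up to a term that is regular and vanishes at $x=0$ in the appropriate sense. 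The subtlety at $s=m$ is that the naive leading term $x^{m-s}f$ in $\Phi(s)f$ collides, as $s\to m$, with the boundary; for $f=1$ the leading coefficient $f_0=1$ contributes nothing to $G|_M$ because the recurrence (\ref{eq.fk}) with $f_0=1$ feeds only through the differential operators $Q_{k,\ell}$, so one is left with exactly the contribution of $p_{m,s}(1)$.

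Next I would take the limit $s\to m$ in the modified Poisson operator $\mathcal{P}_m(s)$ (defined as in the previous lemma with $\Phi_m(s)=\Phi(s)-\Phi(m-s)\circ p_{m,s}$), which is holomorphic at $s=m$ under the no-eigenvalue hypothesis, and read off the value of $G|_M$. Since $\mathcal{P}(s)=\mathcal{P}_m(s)$ by uniqueness of the Dirichlet problem, and since $\mathcal{P}(m)1$ must have the form $x^0 F+(x^m\log x)G$ with $F|_M=1$, comparing the $x^0$-term (the non-logarithmic boundary value) of $\mathcal{P}_m(m)1$ yields $S_X(m)1$. Here one uses that $\Phi(m-s)p_{m,s}(1)$ has a term $x^{s}p_{m,s}(1)$ whose boundary value, after dividing by $x^s$, contributes $-p_{m,s}(1)$ in the limit; all other terms are either higher order in $x$ or cancel. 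The sign comes, as in \cite{GZ:2003}, from the $x^{-s}$ prefactor and the structure of $\Phi_m(s)$.

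The main obstacle I anticipate is bookkeeping the limit carefully: at $s=m$ one is at the boundary of the half-plane $\operatorname{Re}(s)>m/2$ where $\Phi(m-s)$ was guaranteed analytic, and $m-s\to 0$, so one must check that $\Phi(m-s)$ (equivalently $\Gamma(m-2s)^{-1}\Phi(s)$ evaluated appropriately) behaves well as $s\to m$ and that the operator $p_{m,s}$, which a priori has a pole at $s=m/2+m/2=m$, interacts with it so that $\Phi(m-s)\circ p_{m,s}$ has a removable singularity — or, if not removable, that its singular part is exactly cancelled by that of $\Phi(s)$, leaving a finite limit. This is precisely the mechanism in Proposition \ref{prop.S.poles}, so the argument should go through verbatim; the only genuinely new feature is identifying that for the \emph{constant} input the surviving boundary term is $-\lim_{s\to m}p_{m,s}(1)$ rather than a residue, which follows because at $s=m$ the exponents $m-s$ and $0$ coincide and the $\log$-term absorbs the residue of $p_{m,s}$ while the finite part of $p_{m,s}(1)$ survives in the non-logarithmic coefficient. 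I would then simply state that the computation is identical to the cited one and record the result.
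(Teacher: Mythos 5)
Your proposal misses the one substantive input on which the paper's proof turns, and the bookkeeping you describe cannot close that gap by itself. The paper's argument is short: since $\Delta_{\varphi}1=0$, one has $\mathcal{P}(s)1\rightarrow1$ (locally uniformly) as $s\rightarrow m$; comparing this with the expansion
\[
\mathcal{P}(s)1=\sum_{k=0}^{m}x^{m-s+k}p_{k,s}(1)+x^{s}S_{X}(s)1+\mathcal{O}(x^{m+1/2}),
\]
the two terms whose exponents collide at $x^{m}$ must have coefficients summing to the $x^{m}$-coefficient of the limit, and that coefficient is $0$ precisely because the limit is the constant function $1$. This is exactly $\lim_{s\rightarrow m}\left[p_{m,s}(1)+S_{X}(s)1\right]=0$. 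Nowhere do you use that the datum is the constant function in this essential way; your remark that ``$f_{0}=1$ feeds only through the $Q_{k,\ell}$'' is not the point, since the $Q_{k,\ell}$ have zeroth-order parts and hence $p_{k,s}(1)\neq0$ in general.

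Concretely, running your computation with $\mathcal{P}_{m}(s)=\left[I-R(s)\left(\Delta_{\varphi}-s(m-s)\right)\right]\circ\Phi_{m}(s)$ gives
\[
S_{X}(s)f=-p_{m,s}f-\left.\left[x^{-s}R(s)\left(\Delta_{\varphi}-s(m-s)\right)\Phi_{m}(s)f\right]\right\vert_{x=0},
\]
and the second term is exactly what you dismiss with ``all other terms are either higher order in $x$ or cancel.'' It is not higher order: $R(s)$ maps $\mathcal{\dot{C}}^{\infty}(X)$ into $x^{s}\mathcal{C}^{\infty}(X)$, so this term contributes a genuine, generally nonzero and nonlocal boundary value; indeed for general $f$ the operator $\lim_{s\rightarrow m}\left(S_{X}(s)+p_{m,s}\right)$ is not zero (it is nonlocal, while $p_{m,s}$ is differential), so the claimed cancellation cannot follow from formal expansion alone. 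Its vanishing for $f=1$ in the limit is equivalent to $\mathcal{P}(m)1=1$, which you must therefore prove and invoke --- at which point you are back to the paper's argument. Relatedly, ``comparing the $x^{0}$-term of $\mathcal{P}_{m}(m)1$'' yields $F\vert_{M}=1$, not $S_{X}(m)1$: the quantity $\lim_{s\rightarrow m}S_{X}(s)1$ sits inside the $x^{m}$ Taylor coefficient of $F$ at $s=m$, inseparably added to $\lim_{s\rightarrow m}p_{m,s}(1)$, which is another way of seeing that without the input $\mathcal{P}(m)1=1$ the expansion cannot determine $S_{X}(m)1$.
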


\begin{proof}
As $s\rightarrow m$ we have $\mathcal{P}(s)1\rightarrow1$. On the other hand,
for $s$ with $\left\vert s-m\right\vert <1/2$,
\[
\mathcal{P}(s)1=\sum_{k=0}^{m}x^{m-s+k}p_{k,s}(1)+x^{s}S_{X}(s)1+\mathcal{O}%
(x^{m+1/2}).
\]
This implies that
\[
\lim_{s\rightarrow m}\left[  x^{2m-s}p_{m,s}(1)+x^{m}S_{X}(s)1\right]  =0
\]
from which the claimed formula follows.
\end{proof}

\begin{remark}
\label{rem.FH}Note that, although $p_{m,s}$ has a pole at $s=m$, the limit
$\lim_{s\rightarrow m}p_{m,s}(1)$ exists. This implies that $P_{m,s}1$ (see
(\ref{eq.Pks})) has a first-order zero at $s=m$, i.e., $P_{m,s}1=(m-s)Q_{m,s}$
for a scalar function $Q_{m,s}$. The CR $Q$-curvature is then given by
$Q_{m,m}$ \cite{FH:2003}.
\end{remark}
\section{CR-Covariant Operators}

In this section we show that if $\varphi$ is an approximate solution of the
complex Monge-Amp\`{e}re equation in the sense discussed above, then the
residues of the scattering operator at $s=m/2+\ell/2$, $\ell=1,\cdots,m$ are
the CR-covariant differential operators $P_{k}$ defined in \cite{FH:2003}. In
order to do this we first recall Fefferman and Graham's \cite{FG:1985} set-up
for studying conformal invariants of compact manifolds and the construction of
the GJMS\ \cite{GJMS:1992} operators. We then recall its application to
CR-manifolds taking care that the arguments carry over from pseudoconvex
domains in $\mathbb{C}^{m}$ to the manifold setting studied here.

\subsection{The GJMS\ Construction}

\label{subsec.GJMS}

We begin by recalling Fefferman and Graham's construction of the ambient
metric and ambient space for a conformal manifold and the GJMS\ conformally
covariant operators on $\mathcal{C}$ obtained from this construction. Suppose
that $\left(  \mathcal{C},\left[  g\right]  \right)  $ is a conformal manifold
of signature $(p,q)$, i.e., a smooth manifold of dimension $N=p+q$ together
with a conformal class of pseudo-Riemannian metrics of signature $(p,q)$ on
$\mathcal{C}$. Fix a conformal representative $g_{0}$. The \emph{metric
bundle} $\mathcal{G}\subset S^{2}T^{\ast}\mathcal{C}$ is a bundle on
$\mathcal{C}$ with fibres%
\[
\mathcal{G}_{p}=\left\{  t^{2}g_{0}(p):t>0\right\}
\]
We denote by $\pi:\mathcal{G}\rightarrow M$ the natural projection. The
\emph{tautological metric} $G$ on $\mathcal{G}$ is given by%
\[
G(X,Y)=g(\pi_{\ast}X,\pi_{\ast}Y)
\]
for tangent vectors $X$ and $Y$ to $(p,g)\in\mathcal{G}$. There is a natural
$\mathbb{R}^{+}$-action $\delta_{s}$ on $\mathcal{G}$ given by $\delta
_{s}(p,g)=(p,s^{2}g)$.

The \emph{ambient space} over $\mathcal{C}$ is the space $\widetilde
{\mathcal{G}}=\mathcal{G}\times(-1,1)$. Note that the map $g\mapsto(g,0)$
imbeds $\mathcal{G}$ in $\widetilde{\mathcal{G}}$.

Fefferman and Graham proved the existence of a unique metric $\widetilde{g}$
of signature $(p+1,q+1)$ on $\widetilde{\mathcal{G}}$, the \emph{ambient
metric} on $\widetilde{\mathcal{G}}$ having the following three
properties:\newline(a) $i^{\ast}\widetilde{g}=G$\newline(b) $\delta_{s}^{\ast
}\widetilde{g}=s^{2}\widetilde{g}$\newline(c) $\operatorname*{Ric}%
(\widetilde{g})=0$ along $\mathcal{G}$ to infinite order if $N$ is odd, and up
to order $N/2$ if $N$ is even.

Here the uniqueness is meant in the sense of formal power series.

To define the GJMS\ operators, we first define spaces of homogeneous functions
on $\mathcal{G}$. For $w\in\mathbb{R}$ let $\mathcal{E}(w)$ denote the
functions $f$ on $\mathcal{G}$ homogeneous of degree $w$ with respect to
$\delta_{s}$ and smooth away from $0$. The GJMS\ operators $\mathcal{P}_{k}$
may be defined in two ways.

(1) Given $f\in\mathcal{E}(-N/2+k)$, extend $f$ to a function $\widetilde{f}$
homogeneous of the same degree on $\widetilde{\mathcal{G}}$, and set%
\begin{equation}
\mathcal{P}_{k}f=\left.  \widetilde{\Delta}^{k}\widetilde{f}\right\vert
_{\mathcal{G}} \label{eq.Pk}%
\end{equation}
where $\widetilde{\Delta}$ is the Laplacian for the ambient metric
$\widetilde{g}$ on $\widetilde{\mathcal{G}}$.

(2) Given $f\in\mathcal{E}(-N/2+k)$, $\mathcal{P}_{k}$ is the normalized
obstruction to extending $f$ to a smooth function $\widetilde{f}$ on
$\widetilde{\mathcal{G}}$ having the same homogeneity and satisfying
$\widetilde{\Delta}^{k}\widetilde{f}=0$.

The existence of GJMS\ operators was proven in \cite{GJMS:1992} for
$k=1,2,\cdots$ if $N$ is odd, and for $k=1,2,\cdots,N/2$ if $N$ is even.


\subsection{Application to CR-Manifolds}

Following \cite{GG:2005} we describe how the GJMS construction \cite{GJMS:1992} can be used to
prove the existence of CR-covariant differential operators. We begin with a
CR-manifold $M$ of dimension $2n+1$ and show how to construct a conformal
manifold $\mathcal{C}$ of dimension $2n+2$ and a conformal class of metrics
with signature $(2n+1,1)$ to which the GJMS\ construction may be applied. One
then \textquotedblleft pulls back\textquotedblright\ the GJMS\ operators to
$M$.

Recall that the \emph{canonical bundle }$K$ over $M$ is the bundle of
holomorphic $(n+1)$-forms generated by holomorphic forms of the type
$\theta\wedge\theta^{1}\wedge\cdots\wedge\theta^{n}$ where $\theta$ is a
contact form and $\left\{  \theta^{\alpha}\right\}  $ is a basis for
$\mathcal{H}$ of admissible $(1,0)$-forms.\ We denote by $K^{\ast}$ the
canonical bundle of $M$ with the zero section removed. The \emph{circle
bundle} $\mathcal{C}$ over $M$ is the bundle%
\[
\mathcal{C}=\left(  K^{\ast}\right)  ^{1/(n+2)}/\mathbb{R}^{+}.
\]
The circle bundle is an $S^{1}$-bundle over $M$, having real dimension $2m$ if
$m=n+1$. If we fix a contact form $\theta$ on $M$ (and hence a
pseudo-Hermitian structure on $M$), there is a corresponding section $\zeta$
of $K^{\ast}$ chosen so that $\theta$ is volume-normalized with respect to
$\zeta$. We denote by $\psi$ the angle determined by $\zeta(p)$ in each fibre
of $\mathcal{C}$ and define a fibre variable%
\[
\gamma=\frac{\psi}{n+2}%
\]
Note that $\gamma$ is canonically determined by $\theta$. Following Lee
\cite{Lee:1986}, let us define a canonical one-form $\sigma$ on $\mathcal{C}$
by%
\begin{equation}
(n+2)\sigma=(n+2)d\gamma+i\omega_{\alpha}^{\alpha}-\frac{1}{2(n+1)}%
R\theta\label{eq.sigma}%
\end{equation}
where $\omega_{\alpha}^{~\beta}$ is the connection one-form and $R$ is the
Webster scalar curvature of the pseudo-Hermitian structure $\theta$. The
mapping $\theta\mapsto g_{\theta}$ given by
\begin{equation}
g_{\theta}=h_{\alpha\beta}\theta^{\alpha}\cdot\theta^{\overline{\beta}%
}+2\theta\cdot\sigma\label{eq.gtheta}%
\end{equation}
(where $\cdot$ denotes the symmetric product) defines a mapping of
pseudo-Hermitian structures to Lorenz metrics which respects conformal
classes. One can now obtain GJMS\ operators on $\mathcal{C}$ using the
Fefferman-Graham construction.

\begin{remark}
\label{rem.r}It is immediate from formulas (\ref{eq.sigma}) and
(\ref{eq.gtheta}) that
\[
g_{\theta}(T,T)=-\frac{1}{(n+1)(n+2)}R.
\]
On the other hand, Farris \cite{Farris:1986} computed that, if $\theta$ is the
contact form induced by an approximate solution of the complex
Monge-Amp\`{e}re equation, then
\[
g_{\theta}(T,T)=2r
\]
where $r$ is the transverse curvature. It follows that the transverse
curvature is, in this case, an intrinsic pseudo-Hermitian invariant.
\end{remark}

To compute their pullbacks to $M$, we first note that the metric bundle
$\mathcal{G}$ of $(\mathcal{C},\left[  g\right]  )$ is diffeomorphic to
$\left(  K^{\ast}\right)  ^{1/(n+2)}$ and $\widetilde{\mathcal{G}}%
\simeq\left(  K^{\ast}\right)  ^{1/(n+1)}\times(-1,1)$. We define spaces of
functions%
\begin{align*}
\mathcal{E}(w,w^{\prime})  &  =\left\{  f\in\mathcal{C}^{\infty}((K^{\ast
})^{1/(n+2)}:f(\lambda\xi)=\lambda^{w}\overline{\lambda}^{w^{\prime}}%
f(\xi)\text{ for }\lambda\in\mathbb{C}^{\ast}\right\} \\
&  =\left\{  f\in\mathcal{E}(w+w^{\prime}):\left(  e^{i\phi}\right)  ^{\ast
}f(\xi)=e^{i\phi(w-w^{\prime})}f(\xi)\right\}
\end{align*}
We will primarily be concerned with functions in
\[
\mathcal{E}(w,w)=\left\{  f\in\mathcal{E}(w+w^{\prime}):\left(  e^{i\phi
}\right)  ^{\ast}f(\xi)=f(\xi)\right\}
\]
which descend to smooth functions on $M$.

For $k\in\mathbb{Z}$, we define
\begin{align*}
P_{w,w^{\prime}}  &  :\mathcal{E}(w,w^{\prime})\rightarrow\mathcal{E}%
(w-k,w^{\prime}-k)\\
f  &  \mapsto2^{-k}\mathcal{P}_{k}f,
\end{align*}
where $\mathcal{P}_{k}$ is defined in (\ref{eq.Pk}). Then choosing
$w=w^{\prime}=\left(  k-(n+1)\right)  /2$, we get operators $P_{k}$ defined on
$\mathcal{E}(-N/2+k)$.which are invariant under the circle action $\left(
e^{i\phi}\right)  ^{\ast}$ and hence may be viewed as smooth sections of a
density bundle over $M$. These operators $P_{k}$ are the CR-covariant
differential operators which we will connect to poles of the scattering operator.

If $X$ admits a globally defined approximate solution $\varphi$ of the
Monge-Amp\`{e}re equation, then for each $p\in M=\partial X$ there is a
neighborhood $U$ of $p$ and holomorphic coordinates $(z_{1},\cdots,z_{m})$
near $p$ so that $\varphi$ is an approximate solution of the Monge-Amp\`{e}re
equation in $U$. Let
\[
\theta=\left.  \frac{i}{2}\left(  \overline{\partial}-\partial\right)
\varphi\right\vert _{M}%
\]
be the induced pseudo-Hermitian structure on $M$, and let $\zeta=\left.
dz^{1}\wedge\cdots\wedge dz^{m}\right\vert _{M}$. Then $\theta$ is
volume-normalized with respect to $\zeta$.

Let us denote by $z_{0}$ the induced fibre coordinate of $\left(
\mathcal{K}^{\ast}\right)  ^{1/(n+2)}$ and let%
\[
Q=\left\vert z_{0}\right\vert ^{2}\varphi
\]
Then $Q$ is a globally defined smooth function on $\widetilde{\mathcal{G}}$
(which is diffeomorphic to $\mathbb{C}\times N$ for a collar neighborhood $N$
of $M$ in $X$) and the ambient metric on $\widetilde{\mathcal{G}}$ is the
K\"{a}hler metric associated to the K\"{a}hler form%
\[
\omega=i\partial\overline{\partial}Q
\]
where the corresponding metric $g_{\theta}$ on $\mathcal{C}$ is given by
(\ref{eq.gtheta}). The key computation linking the GJMS\ operators to the
Laplacian is given in Proposition 5.4 of \cite{GG:2005} and clearly
generalizes to our situation. Thus we have:

\begin{proposition}
\label{prop.link}If $u$ is a smooth function on $X$ then%
\[
\widetilde{\Delta}\left(  \left\vert z_{0}\right\vert ^{2w}\varphi
^{w}u\right)  =\left(  \left\vert z_{0}\right\vert ^{2w}\varphi^{w}\right)
\left(  \Delta_{\varphi}+w(n+1+w)\right)  u
\]
where $g$ is the metric associated to the K\"{a}hler form%
\[
\omega_{\varphi}=\frac{i}{2}\partial\overline{\partial}\log\left(
-1/\varphi\right)
\]

\end{proposition}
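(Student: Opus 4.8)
The plan is to reduce the identity to a direct computation in local holomorphic coordinates, using the fact that the ambient metric $\widetilde g$ on $\widetilde{\mathcal G}$ is K\"ahler with K\"ahler potential $Q = |z_0|^2 \varphi$ (equivalently $\widetilde g$ is associated to $\omega = i\partial\overline\partial Q$). First I would fix a point $p\in M$ and holomorphic coordinates $(z_1,\ldots,z_m)$ near $p$ in which $\varphi$ is an approximate Monge-Amp\`ere solution, and let $z_0$ be the induced fibre coordinate on $(K^\ast)^{1/(n+2)}$, so that $(z_0,z_1,\ldots,z_m)$ are holomorphic coordinates on $\widetilde{\mathcal G}$. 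In these coordinates $\widetilde\Delta$ is the trace with respect to $\widetilde g$ of $i\partial\overline\partial(\cdot)$, exactly as in \eqref{eq.Bergmann.Laplacian}; I would write $\widetilde\Delta f = \widetilde g^{a\overline b} f_{a\overline b}$ where Latin indices now run $0,1,\ldots,m$.

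The key algebraic observation is that the Monge-Amp\`ere structure is engineered precisely so that the K\"ahler metric of $Q=|z_0|^2\varphi$ has a block-triangular inverse: the potential $\log(-1/\varphi) = -\log(-\varphi)$ recovering $g_\varphi$ via $\omega_\varphi = \tfrac i2\partial\overline\partial\log(-1/\varphi)$ is essentially $\log Q$ up to the pluriharmonic term $\log|z_0|^2$ (which contributes nothing to $\partial\overline\partial$ and only shifts things by the fibre direction). Concretely, because $\partial\overline\partial\log|z_0|^2 = 0$ away from $z_0=0$, one has $i\partial\overline\partial\log Q = i\partial\overline\partial\log\varphi = -\,i\partial\overline\partial\log(-\varphi) = -2\omega_\varphi$, while $\widetilde g$ itself is $i\partial\overline\partial Q$. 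I would then compute $\widetilde\Delta$ applied to $|z_0|^{2w}\varphi^w u$: writing $|z_0|^{2w}\varphi^w = Q^w$ (on the relevant set), and using that $\widetilde g$ comes from the potential $Q$, the factor $Q^w$ can be pulled through $\widetilde\Delta$ at the cost of exactly the lower-order terms. The scalar weight term $w(n+1+w) = w(m+w)$ should appear from differentiating $Q^w$ twice and contracting against $\widetilde g^{a\overline b} Q_a Q_{\overline b}$, which the K\"ahler condition forces to equal $Q$ (this is the analogue of the computation $|du|^2_{g_\varphi}$ in \eqref{eq.du.norm}, and is where the homogeneity of degree $m+1$ of the Monge-Amp\`ere determinant enters). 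The tangential derivatives in the $z_1,\ldots,z_m$ directions reassemble into $\Delta_\varphi u$ because $u$ is independent of $z_0$ and the $(0,\overline 0)$ and mixed components of $\widetilde g^{-1}$ are precisely arranged to reproduce the Bergman-type Laplacian $g^{j\overline k}u_{j\overline k}$.

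I expect the main obstacle to be the bookkeeping of the inverse metric $\widetilde g^{a\overline b}$: one must show that, in the chosen coordinates, the $z_0$-row and $z_0$-column of $\widetilde g^{-1}$ conspire so that acting on a $z_0$-independent function $u$ the only surviving contributions are (i) the pure weight term $w(m+w)$ from the $Q^w$ prefactor and (ii) the metric Laplacian $\Delta_\varphi u$ in the remaining variables. This is exactly the content of Proposition 5.4 of \cite{GG:2005} for pseudoconvex domains, and since the computation is entirely local and algebraic in the coordinate expression of $\varphi$ (using only that $\varphi$ is a defining function and the $\partial\overline\partial\log(-\varphi)$ structure — the approximate Monge-Amp\`ere property is not actually needed for this particular identity, only for the later identification with GJMS operators), the same manipulation goes through verbatim in the manifold setting once the globally defined approximate solution $\varphi$ has furnished the coordinates and the global function $Q$ on $\widetilde{\mathcal G}$. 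I would therefore simply carry out the Graham-Gover computation in these coordinates and remark that nothing in it uses the global structure of $\Omega\subset\mathbb C^m$.
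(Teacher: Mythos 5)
Your approach is essentially the paper's: the paper offers no computation of its own but simply asserts that "the key computation ... is given in Proposition 5.4 of \cite{GG:2005} and clearly generalizes to our situation," which is exactly the reduction you propose (and you correctly observe that the identity itself uses only the homogeneity of $Q=|z_0|^2\varphi$ in the fibre variable and the block structure of $\widetilde{g}^{-1}$, not the approximate Monge--Amp\`ere property). Your sketch of the local computation supplies more detail than the paper does, and the minor slips in your heuristic asides (the sign chain for $i\partial\overline{\partial}\log Q$, and attributing $\widetilde{g}^{a\overline{b}}Q_aQ_{\overline{b}}=Q$ to the K\"ahler condition rather than to Euler's identity for the $z_0$-homogeneity of $Q$) do not affect the validity of the argument.
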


\section{Proofs of the Main Theorems}

Finally, we prove Theorems \ref{thm.1}, \ref{thm.2}, and \ref{thm.3}.

\begin{proof}
[Proof of Theorem \ref{thm.1}]The statement about the poles of $S_{X}(s)$ and
$s=m/2+k/2$ is proved in Proposition \ref{prop.S.poles}. If $g$ is a metric on
$X$ associated to the K\"{a}hler form $\omega=i\overline{\partial}\partial
\log(-1/\varphi)$ for a globally defined approximate solution of the
Monge-Amp\`{e}re equation, then the identification of the residues of
$S_{X}(s)$ with the CR-covariant differential operators of Fefferman and
Hirachi is a consequence of Proposition \ref{prop.link} and the second
characterization of the GJMS operators given in section \ref{subsec.GJMS}.
\end{proof}

\begin{proof}
[Proof of Theorem \ref{thm.2}]Owing to Proposition \ref{prop.S1}, \ it
suffices to identify $\lim_{s\rightarrow m}p_{m,s}1$ with the CR
$Q$-curvature. This is a consequence of Remark \ref{rem.FH}.
\end{proof}

\begin{proof}
[Proof of Theorem \ref{thm.3}]To prove Theorem \ref{thm.3}, let
\[
u_{s}=\mathcal{P}(s)1
\]
for $s$ real. Observe that $u_{s}$ is real and that $u_{s}\rightarrow1$ as
$s\rightarrow m$ uniformly on compact subsets of $X$. \ On the other hand, for
$s\neq m$ but $s$ close to $m$, $u_{s}$ takes the form
\begin{equation}
u_{s}\sim x^{m-s}F(s)+x^{s}G(s) \label{eq.us}%
\end{equation}
where $F(s)$ and $G(s)$ belong to $\mathcal{C}^{\infty}(X)$, $G(s)=S_{X}%
(s)1+\mathcal{O}(x)$ uniformly in $s$ near $m$, and
\begin{equation}
F(s)-1\sim\sum_{k\geq1}x^{k}F_{k}(s) \label{eq.usF}%
\end{equation}
where $F_{k}\in\mathcal{C}^{\infty}(M)$ and $F_{k}(s)\rightarrow0$ as
$s\rightarrow m$, save for the $F_{m}(s)$ term which obeys%
\begin{equation}
F_{m}(s)+S_{X}(s)1\rightarrow0\text{ as }s\rightarrow m \label{eq.usFm}%
\end{equation}
(see the proof of Proposition \ref{prop.S1}). Note that%
\[
\int_{M}F_{m}(s)\psi=\int p_{m,s}1~\psi.
\]
As in \cite{GZ:2003} we will prove Theorem \ref{thm.3} by computing
\[
\operatorname*{FP}_{\varepsilon\downarrow0}\left(  \int_{x>\varepsilon}\left[
\left\vert du_{s}\right\vert ^{2}-s(m-s)u_{s}u_{s}\right]  \omega^{m}\right)
\]
(where $\operatorname*{FP}$ denotes the Hadamard finite part) in two different
ways. First, we use Proposition \ref{prop.bp.extended} with $u_{1}=u_{2}%
=u_{s}$ to conclude that%
\begin{equation}
\operatorname*{FP}_{\varepsilon\downarrow0}\left(  \int_{x>\varepsilon}\left[
\left\vert du_{s}\right\vert ^{2}-s(m-s)u_{s}u_{s}\right]  \omega^{m}\right)
=-m\int_{M}S_{X}(s)1~\psi\label{eq.key.fp}%
\end{equation}
Secondly\ (and somewhat more painfully), we use the asymptotic expansion of
$u_{s}$ and the asymptotic form of the volume form $\omega^{m}$ directly to
conclude that%
\begin{equation}
\operatorname*{FP}_{\varepsilon\downarrow0}\left(  \int_{x>\varepsilon}\left[
\left\vert du_{s}\right\vert ^{2}-s(m-s)u_{s}u_{s}\right]  \omega^{m}\right)
=\frac{m}{2}L \label{eq.painful}%
\end{equation}
from which the desired equality will follow. The computations follow along the
lines of \cite{GZ:2003} with some trivial differences in the computation for
(\ref{eq.exp.2}) owing to the different form of the Laplacian on a complex
manifold. We give a summary in Appendix \ref{app.grunge}.
\end{proof}

\appendix{}

\section{CR $Q$-curvature and Asymptotic Volume}

\label{app.grunge}

The purpose of this appendix is to summarize the calculations leading to the
identity (\ref{eq.painful}) used in the proof of Theorem \ref{thm.3}. We will
show that%
\begin{equation}
\lim_{s\rightarrow m}s(m-s)\operatorname*{FP}_{\varepsilon\downarrow0}\left(
\int_{x>\varepsilon}u_{s}^{2}~\omega^{m}\right)  =-mc_{m}\int_{M}Q_{\theta
}^{CR}~\psi+mL/2 \label{eq.fp1}%
\end{equation}
and%
\begin{equation}
\lim_{s\rightarrow m}\operatorname*{FP}_{\varepsilon\downarrow0}\left(
\int_{\varepsilon}^{x_{0}}\left\vert du_{s}\right\vert ^{2}~\omega^{m}\right)
=-mc_{m}\int_{M}Q_{\theta}^{CR}~\psi\label{eq.fp2}%
\end{equation}
As in \cite{GZ:2003}, since $u_{s}\rightarrow1$ uniformly on compacts of $X$,
it suffices to compute the respective limits%
\begin{equation}
\lim_{s\rightarrow m}\left[  s(m-s)\operatorname*{FP}_{\varepsilon\downarrow
0}\left(  \int_{\varepsilon<x<x_{0}}u_{s}^{2}~\omega^{m}\right)  \right]
\label{eq.exp.1}%
\end{equation}
and%
\begin{equation}
\lim_{s\rightarrow m}\left[  \operatorname*{FP}_{\varepsilon\downarrow
0}\left(  \int_{\varepsilon<x<x_{0}}\left\vert du_{s}\right\vert ^{2}%
~\omega^{m}\right)  \right]  \label{eq.exp.2}%
\end{equation}
for any $x_{0}>0$. This reduction allows us to use boundary coordinates and
introduce asymptotic expansions for $u_{s}$ and $\omega^{m}$. In the
computations we make use of the simple formulas%
\begin{equation}
\operatorname*{FP}_{\varepsilon\downarrow0}\int_{\varepsilon}^{x_{o}%
}x^{m-2s+j}\frac{dx}{x}=\dfrac{x_{0}^{m-2s+j}}{m-2s+j} \label{eq.fpid.1}%
\end{equation}
(note that the finite part is independent of $x_{0}$ if $j=m$) and%
\begin{equation}
\operatorname*{FP}_{\varepsilon\downarrow0}\int_{\varepsilon}^{x_{0}}\frac
{dx}{x}=\log x_{0} \label{eq.fpid.2}%
\end{equation}
Setting $x=-\varphi$, we also have from (\ref{eq.volume}) that%
\[
\omega_{\varphi}^{m}=\frac{\eta}{x^{m}}\frac{dx}{x}\wedge(d\theta)^{n}%
\wedge\theta
\]
for $\eta\in\mathcal{C}^{\infty}(X)$ with $\psi=\left(  \left.  \eta
\right\vert _{M}\right)  \left(  d\theta\right)  ^{n}\wedge\theta$ (here
$\left.  \eta\right\vert _{M}=m/2^{n-1}$ in accordance with \ (\ref{eq.psi})).
We will write%
\[
\eta\sim\sum_{k\geq0}x^{k}\eta_{k}%
\]
for $\eta_{k}\in\mathcal{C}^{\infty}(M)$, so that
\[
L=\int_{M}\eta_{m}~\left(  d\theta\right)  ^{n}\wedge\theta.
\]

First, we consider (\ref{eq.exp.1}). In expanding the density
\[
u_{s}^{2}~\omega^{m}=f_{1}\frac{dx}{x}\wedge(d\theta)^{n}\wedge\theta
\]
asymptotically in $x$, we may neglect terms which are integrable, or terms
which give rise to finite parts which are holomorphic at $s=m$. It suffices
then to compute the coefficient of $x^{2m-2s}$ in the expansion for $f_{1}$,
since only the $x^{2m-2s}$ term will give rise to a finite part with pole at
$s=m$. Note that the resulting residue is independent of $x_{0}$ (see
(\ref{eq.fpid.1})). Since%
\begin{align*}
u_{s}^{2}  &  =x^{2m-2s}\left[  1+2(F(s)-1)+(F(s)-1)^{2}\right] \\
&  +2x^{2m}F(s)G(s)+x^{2s}G(s)^{2}%
\end{align*}
it suffices to examine the terms $x^{2m-2s}\eta_{m}$ and $2x^{2m-2s}F_{m}(s)$
in $f_{1}$. The first of these contributes $(m/2)L$ to (\ref{eq.exp.1}) and
the second contributes
\[
-\int_{M}S_{X}(m)1~\psi=-mc_{m}\int_{M}Q_{\theta}^{CR}~\psi.
\]
This leads to (\ref{eq.fp1})\ as claimed

Next, we consider (\ref{eq.exp.2}). From (\ref{eq.du.norm}), (\ref{eq.N.x}),
and the fact that
\[
\left\vert u_{m}\right\vert ^{2}=\frac{1}{4}\left\vert \left(  N-iT\right)
u\right\vert ^{2}%
\]
it follows that the density
\begin{equation}
\left\vert du_{s}\right\vert ^{2}\omega^{m}=\frac{1}{1+rx}\left\vert
x\partial_{x}u_{s}\right\vert ^{2}\omega^{m}+xH^{\alpha\overline{\beta}}%
(u_{s})_{\alpha}(u_{s})_{\overline{\beta}}\omega^{m} \label{eq.dense}%
\end{equation}
for a tensor $H^{\alpha\overline{\beta}}$ which is smooth in $x$ down to
$x=0$. We will show that the first right-hand term in (\ref{eq.dense}) leads
to the right-hand side of (\ref{eq.fp2})\ and the second right-hand term in
(\ref{eq.dense}) makes no contribution.

From the asymptotic form of $u_{s}$ (see (\ref{eq.us}), (\ref{eq.usF}),
\ref{eq.usFm})) we have%
\begin{equation}
x\partial_{x}u_{s}=x^{m-s}K_{1}(s)+x^{s}K_{2}(s) \label{eq.xds}%
\end{equation}
where%
\[
K_{1}(s)=(m-s)F(s)+xF_{x}(s)
\]
and $K_{1}(s)$ and $K_{2}(s)$ both approach zero as $s\rightarrow m$. For this
reason, writing%
\[
\left\vert x\partial_{x}u_{s}\right\vert ^{2}\omega^{m}=f_{2}\frac{dx}%
{x}\wedge(d\theta)^{n}\wedge\theta
\]
we need only consider the coefficient of $x^{2m-2s}$ in the expansion of
$f_{2}$ since all other terms give rise to terms whose finite parts vanish as
$s\rightarrow m$. On squaring (\ref{eq.xds}) we have%
\[
\left\vert x\partial_{x}u_{s}\right\vert ^{2}=x^{2m-2s}K_{1}(s)^{2}%
+2x^{m}K_{1}(s)K_{2}(s)+x^{2s}K_{2}(s)^{2}.
\]
We can drop terms containing $(m-s)^{2}$ times a holomorphic function since
these will vanish as $s\rightarrow m$, even if the power $x^{2s-2m}$ occurs in
$f_{2}$. Thus we need to compute the coefficient of $x^{m}$ in $K_{1}(s)^{2}$
to order $(m-s)$. This is $2(2m-s)(m-s)F_{m}(s)$ which contributes
$-(2m-s)\int_{M}F_{m}(s)~\psi$ to the finite part of $\int_{x>\varepsilon
}\left\vert x\partial_{x}u_{s}\right\vert ^{2}\omega^{m}$ and approaches
$-mc_{m}\int_{M}Q_{\theta}^{CR}~\psi$ as $s\rightarrow m$.

It remains to show that
\begin{equation}
\lim_{s\rightarrow m}\operatorname*{FP}_{\varepsilon\downarrow0}\left(
\int_{\varepsilon}^{x_{0}}\frac{rx}{1+rx}\left\vert x\partial_{x}%
u_{s}\right\vert ^{2}~\omega^{m}\right)  =0 \label{eq.fp.2b}%
\end{equation}
and%
\begin{equation}
\lim_{s\rightarrow m}\operatorname*{FP}_{\varepsilon\downarrow0}\left(
\int_{\varepsilon}^{x_{0}}xH^{\alpha\overline{\beta}}(u_{s})_{\alpha}%
(u_{s})_{\overline{\beta}}~\omega^{m}\right)  =0 \label{eq.fp.2c}%
\end{equation}
In the first case, we can use the analysis above to show that the coefficient
of $x^{2m-2s}$ in the expansion for $\left[  \left(  rx\right)
/(1+rx)\right]  \left\vert x\partial_{x}u_{s}\right\vert ^{2}~\omega^{m}$
vanishes as $(s-m)^{2}$ when $s\rightarrow m$, implying (\ref{eq.fp.2b}).
Introducing boundary local coordinates $y_{j}$, to prove that (\ref{eq.fp.2c})
holds it suffices to show that
\[
\lim_{s\rightarrow m}\operatorname*{FP}_{\varepsilon\downarrow0}\left(
\int_{\varepsilon}^{x_{0}}x\phi\frac{\partial u}{\partial y_{j}}%
~\frac{\partial u}{\partial y_{k}}\omega^{m}\right)  =0
\]
where $\phi$ is a smooth function supported in a local coordinate patch near
the boundary $M$. This follows from the fact that, in local coordinates
$(x,y)$ on $X$ in a neighborhood of $M$,
\[
\frac{\partial u}{\partial y_{j}}=x^{m-s}L_{1}(s)+x^{s}L_{2}(s)
\]
where both $L_{1}(s)$ and $L_{2}(s)$ vanish to order $(m-s)$ as $s\rightarrow
m$.

\end{document}